\documentclass[12pt]{amsart}

\usepackage[margin=1.5in]{geometry}
\usepackage{graphicx}
\usepackage{amsmath}
\usepackage{amsfonts}
\usepackage{mathrsfs} 
 \usepackage{mathptmx} 

\newcommand{\scl}{\mathscr{L}}

\newtheorem{theorem}{Theorem}[section]
\newtheorem{lemma}{Lemma}[section]
\newtheorem{proposition}{Proposition}[section]
\newtheorem{corollary}{Corollary}[section]

\theoremstyle{definition}

\theoremstyle{remark}
\newtheorem{example}{Example}[section]
\newtheorem{remark}{Remark}[section]

\numberwithin{equation}{section}

 \DeclareSymbolFont{largesymbols}{OMX}{yhex}{m}{n}
  \DeclareMathAccent{\widehat}{\mathord}{largesymbols}{"62}
\DeclareMathOperator{\RE}{Re}

\newcommand{\scb}{\mathcal{B}}

\newcommand{\scs}{\mathcal{S}}

\newcommand{\mcm}{M}

\newcommand{\supp}{\text{supp}}

\newcommand{\bS}{\mathbf{S}}
\newcommand{\Bp}{\mathcal{D}'_{\geq 0}(\Omega)}

\title{An extension of H\"{o}rmander's hypoellipticity theorem}
\author[D.P. Herzog]{David P. Herzog}\address{Mathematics Department, Duke University, 
Durham, NC 27708}
\email{dherzog@math.duke.edu}
\email{ntotz@math.duke.edu}
\author[N. Totz]{Nathan Totz}



\begin{document}
\begin{abstract}
Motivated by applications to stochastic differential equations, an extension of H\"{o}rmander's hypoellipticity theorem is proved for second-order degenerate elliptic operators with non-smooth coefficients.  The main results are established using point-wise Bessel kernel estimates and a weighted Sobolev inequality of Stein and Weiss.  Of particular interest is that our results apply to operators with quite general first-order terms.               
\end{abstract}

\maketitle

\section{Introduction}
Let 
\begin{align}
\label{eqn:L}
\scl = f + X_0 + \sum_{j=1}^r Y_j^* Y_j 
\end{align} 
where $X_0, Y_1, Y_2, \ldots, Y_{r}$ are real vector fields on $\mathbb{R}^d$, $f:\mathbb{R}^d\rightarrow \mathbb{R}$ and $^*$ denotes the formal adjoint with respect to the $L^2(\mathbb{R}^d, dx)$ inner product.  The goal of this paper is to study regularity of weak solutions $v$ to the equation  
\begin{align}
\label{eqn:eqn}
\scl v =g 
\end{align}
on a bounded open subset $\Omega\subset \mathbb{R}^d$ where $g: \mathbb{R}^d \rightarrow \mathbb{R}$.  In the broadest sense, we attempt to do this when $f$ and the coefficients of $X_0, Y_1, \ldots, Y_r$ fail to be smooth ($C^\infty)$ functions on $\Omega$, and the operator $\scl$ is degenerate elliptic.

Since research on this topic is vast, it is important at the outset to highlight previous work and note how this paper differs.  The most evident difference from the seminal papers \cite{F91,FL83,SW06} and the references therein is our framework. As our motivations stem from similar partial differential equations arising in the theory of stochastic differential equations: we assume further regularity in $f$ and the coefficients of $X_0, Y_1, \ldots, Y_r$ and we consider non-negative distributional solutions $v$ on $\Omega$ of \eqref{eqn:eqn}.  Therefore, our initial solution space consists of functions which are not weakly differentiable yet we are afforded the luxury of non-negativity of $v$.  Another difference is that we seek a result in this setting that is strikingly reminiscent of H\"{o}rmander's hypoellipticity theorem \cite{Hor67} which can also give (provided the regularity of the coefficients of $\scl$ permits) further regularity of solutions than H\"{o}lder continuity.  In particular, we will see that to the vector fields $X_0, Y_1, \ldots, Y_r$, we may associate a family of smooth vector fields $\mathcal{F}$ on $\mathbb{R}^d$ such that if 
\begin{align*}
\text{Lie}_{x}(\mathcal{F}) = \mathbb{R}^d \,\text{ for every }\,x\in \Omega,
\end{align*}         
then there exists $\bS>0$ (which is a strictly increasing function of the level of regularity in the coefficients of $\scl$) and $\delta >0$ such that for all distributions $v$ of the type described above and all $s< \bS$
\begin{align*}
v, \scl v \in H_{\text{loc}}^{s}(\Omega:\mathbb{R}) \implies v \in H^{s+\delta}_{\text{loc}}(\Omega:\mathbb{R}).   
\end{align*}  
Notice here that the difference between H\"{o}rmander's original result and the one proven here is that the implication above is only valid up to $s<\bS$ whereas H\"{o}rmander's theorem holds for all $s\in \mathbb{R}$.  This should be expected, as regularity of solutions in general cannot greatly exceed the regularity of the coefficients of the differential operator $\scl$ governing them.

The two main strengths of our result are:
\begin{itemize}
\item  We can apply it to situations where $X_0$ is indispensably needed to generate directions in $\text{Lie}(\mathcal{F})$.  To our knowledge, regularity estimates have not been obtained for operators of this form in such generality.   
\item Because we opt to carry out much of the psuedo-differential calculus in physical space (as opposed to Fourier space), sharper estimates are deduced for the parameter $\bS$ above.   
\end{itemize}


Although we choose not to take this approach, the problem of this paper can be studied probabilistically using the Malliavin calculus.  In fact, this stochastic calculus of variations was initiated by Malliavin in \cite{M78} to give a probabilistic proof of H\"{o}rmander's original result.  His program was subsequently carried out in a number of works \cite{Bis81b,Bis81a,KStrI,KStrII,KStrIII,Nor86} and, since then, research has largely centered on extensions in a direction different than the one taken in this paper \cite{Cass09,HM06,Oc88}.  That being said, however, sufficient conditions for the existence and regularity of probability density functions corresponding to stochastic differential equations have been given before \cite{IW84,MSS05} but not in the same light as here.  In future work, it may be interesting to take an in-depth look at the problem from this perspective to see if further insight can be made.

It is also important to point out that there are a plethora of special forms of $\scl$ where our general result does not give optimal regularity of $v$.  For example, if $\scl$ commutes with certain psuedo-differential operators this can induce (see \cite{WangL03}) local smoothness of $v$ in a fixed direction which can then imply regularity in others via the relation $\scl v = g$.  Although we cannot hope to cover all of these cases in the general setting, we layout a framework that can still yield similar results.  In particular, from this paper one can extract results for the parabolic operator $\scl -c \partial_t$, $c\neq 0$ constant, as well.


The structure of this document is as follows.  In Section 2, we fix notation and state the main results.  In the same section, the main results are then applied to a concrete example.  Section 3 outlines the proof of the main results and gives some intuition behind the arguments which establish them.  In Section 4, we derive point-wise Bessel kernel estimates which are then used indispensably in Section 5, Section 6, and Section 7 to establish the essential Sobolev and commutator estimates.

\section{The statement of the main result and an example}

Let us first fix notation.  For $U\subset \mathbb{R}^d$ open and $V\subset \mathbb{C}$, throughout:

\vspace{0.1in}

\noindent $-$  $B(U:V)$ denotes the set of bounded measurable functions $h: U\rightarrow V$.  

\noindent $-$  For $s\in [0,1)$, $C^s(U:V)$ denotes the set of continuous functions $h:U\rightarrow V$ on with H\"{o}lder exponent $s$.

\noindent $-$  For $s\geq 1$, $C^{s}(U: V)$ denotes the space of $\lfloor s \rfloor $-times continuously differentiable functions $h:U\rightarrow V$ whose $\lfloor s\rfloor$th partial derivatives are $(s-\lfloor s\rfloor)$-H\"{o}lder continuous.  

\noindent $-$  $C^{\infty}(U: V)$ denotes the space of infinitely differentiable functions $h:U\rightarrow V$.

\noindent $-$  If $\mathcal{C}(U:V)$ is one of the function spaces above, $\mathcal{C}_0(U:V)$ denotes the set of all $h\in \mathcal{C}(U:V)$ with compact support in $U$.   

\noindent $-$  For $Q$ compact, $C^{\infty}_0(Q: V)$ denotes the space of infinitely differentiable functions $h: \mathbb{R}^d \rightarrow V$ supported in $Q$.

\noindent $-$  $\mathcal{S}(\mathbb{R}^d: \mathbb{C})$ denotes the class of Swartz functions from $\mathbb{R}^d$ into $\mathbb{C}$.

\noindent $-$  $\hat{u}(\xi)= \int_{\mathbb{R}^d} e^{-2\pi i \xi\cdot x} u(x) \, dx$ denotes the Fourier transform of $u$.  

\noindent $-$  We adopt the Einstein summation convention over repeated indices.  For example, $\sum_{l=1}^d Y^l(x) \partial_l$ will be written compactly as $Y^l(x) \partial_l$.  

\noindent $-$  If $Y=Y^{l}(x) \partial_l$ and $\mathcal{C}$ is a function space of the type introduced above, we write $Y\in T \mathcal{C}$ if $Y^l \in \mathcal{C}$ for all $l=1,2, \ldots, d$.

\noindent $-$  For $h: U\rightarrow V$, we will use $h', h'', h''', h^{(4)}, \ldots$ in the context of various norms to indicate that the supremum has been taken over all derivatives of, respectively, order one, two, three, four, $\ldots$.

\noindent $-$  $\| \cdot \|_{p}$, $p\in [1, \infty]$, denotes the $L^p(\mathbb{R}^d, dx)$ norm.

\noindent $-$  $(\cdot, \cdot)$ denotes the $L^2(\mathbb{R}^d, dx)$ inner product.  

\noindent $-$  For $s\in \mathbb{R}$, $H^s(\mathbb{R}^d: \mathbb{R})$ denotes the closure of $C_{0}^\infty(\mathbb{R}^d: \mathbb{R})$ in the norm
\begin{align*}
\| u\|_{(s)} := \sqrt{ \int_{\mathbb{R}^d} |\hat{u}(\xi)|^2 (1+ 4\pi^2 |\xi|^2)^s\, d\xi}
\end{align*}

\noindent $-$  $H^{s}_{\text{loc}}(\Omega: \mathbb{R})$ denotes the set of real distributions $u$ on $\Omega$ such that $\varphi u \in H^{s}(\mathbb{R}^d: \mathbb{R})$ for all $\varphi \in C_0^\infty(\Omega: \mathbb{R})$.  

\noindent $-$  $| \cdot |_{s}$ for $s>0$ denotes the H\"{o}lder norm on the space $C^{s}(\mathbb{R}^d: \mathbb{C})$.

\noindent $-$  For a vector field $Y=Y^l (x) \partial_l$, we write $|Y|_{s}$ as shorthand notation for $\sup_l |Y^l|_{s}$. 

\noindent $-$  $\Bp$ denotes the set of non-negative distributions $v$ on $\Omega$.

\vspace{0.1in}

Because $\scl$ is a linear operator, it suffices to study regularity of $v$ in a small open subset containing the origin.  In particular, we assume throughout that $0\in \Omega $.

Depending on which directions in $\scl$ are needed to span the tangent space, we will employ one of the following two base regularity assumptions on the coefficients of $\scl$:

\textbf{(A1)}\,  $f\in B_0(\mathbb{R}^d: \mathbb{R})$ and $X_0, Y_1, \ldots, Y_r \in TC^1_0(\mathbb{R}^d: \mathbb{R})$.  
 
\textbf{(A2)}\,  $f\in B_0(\mathbb{R}^d: \mathbb{R})$, $X_0\in C^{3/2}_0(\mathbb{R}^d: \mathbb{R})$, and $Y_1, Y_2, \ldots, Y_r \in TC^{5/2}_0(\mathbb{R}^d: \mathbb{R})$.  Moreover if $w_\gamma(x)= |x|^\gamma $,  $X_0= X_0^l(x) \partial_l$ and $Y_j= Y_j^l(x) \partial_l$, then for every $l,j=1,2, \ldots, r$ each of the following is finite:  
\begin{align*}
\|  \,  (X_0^l)'' w_{\gamma_1}\, \|_{\infty}, \, \| (X_0^l)''' w_{\gamma_2}\|_{\infty}, \,  \|  \,  (Y_j^l)''' w_{\gamma_1}\, \|_{\infty}, \, \|  \,  (Y_j^l)^{(4)} w_{\gamma_2} \, \|_{\infty}
\end{align*}   
for some $0\leq \gamma_1 <\min(1,d/2)$,  $0 \leq \gamma_2 <\min(2, d/2)$. 

\begin{remark}
The assumption that the coefficients of $\scl$ are both globally defined and compactly supported is solely for convenience.  
\end{remark}

\begin{remark}
One can replace the weight conditions in \textbf{(A2)} by general Sobolev inequalities (consult Corollary \ref{wcor} and the proof of Theorem \ref{bracketlem} to see how they are used).  These are given as is for concreteness and simplicity.  
\end{remark}

To define the set of smooth vector fields $\mathcal{F}$ in the statement of the main results, let $\mathcal{F}_0$ denote the class of $Y\in TC^\infty_0(\mathbb{R}^d:\mathbb{R})$ satisfying the following comparison condition:  There exists a constant $C>0$ such that 
\begin{align}
\label{eqn:comparison}
\| Y u\|^2 \leq  C \sum_{j=1}^r \|Y_j u \|^2
\end{align}
for all $u \in C_0^{\infty}(\mathbb{R}^d: \mathbb{C})$.  It is important to note here that the constant $C$ is independent of $u\in C_0^{\infty}(\mathbb{R}^d: \mathbb{C}) $.
 $\mathcal{F}_0^B$ denotes the set of $\Upsilon \in TB_0(\mathbb{R}^d: \mathbb{R})$ satisfying the same comparison condition.      
\begin{remark}
The comparison \eqref{eqn:comparison} mimics \emph{subuniticity} as introduced by Fefferman and Phong \cite{FP83}.  The same condition has been used to obtain regularity of weak solutions when $\scl$ is of certain, specific forms. See, for example, \cite{WangL03}.  
\end{remark}

\begin{theorem}
\label{thm:main}
Suppose that for some $\bS\geq 0$: $f\in C^{\bS}_0(\mathbb{R}^d: \mathbb{R})$, $X_0\in TC^{\bS +1}_0(\mathbb{R}^d: \mathbb{R}) $ and $Y_1, \ldots, Y_r \in T C^{\bS+2}_0(\mathbb{R}^d :\mathbb{R})$.  If one of the following two conditions is met:
\begin{itemize} 
\item \textbf{\emph{(A1)}} is satisfied and $\text{\emph{Lie}}_{x}(\mathcal{F}_0)=\mathbb{R}^d$ for every $x\in \Omega$; 
\item \textbf{\emph{(A2)}} is satisfied and $\text{\emph{Lie}}_{x}(\{X\}\cup\mathcal{F}_0)=\mathbb{R}^d$ for every $x\in \Omega$ where $X\in TC_0^{\infty}(\mathbb{R}^d: \mathbb{R})$ is such that $X_0=X+\Upsilon$ for some $\Upsilon \in \mathcal{F}_0^B$;   
\end{itemize}
then there exists $\delta >0$ such that for every $v\in \Bp$ and every $s<\bS$
\begin{align*}
v, \scl v \in H^{s}_{\emph{\text{loc}}}(\Omega: \mathbb{R}) \implies v \in H^{s+\delta}_{\emph{\text{loc}}}(\Omega: \mathbb{R}).  
\end{align*}
\end{theorem}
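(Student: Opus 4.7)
The overall plan is to reduce the hypoellipticity statement to a family of subelliptic a priori estimates of the form
\[
\|u\|_{(s+\delta)} \leq C\bigl(\|\scl u\|_{(s)} + \|u\|_{(s)}\bigr)
\]
valid for $u \in C_0^\infty(\mathbb{R}^d:\mathbb{R})$ compactly supported in a small neighborhood of any $x_0 \in \Omega$. Once such an estimate is in hand, cutting off $v$ with $\varphi \in C_0^\infty(\Omega)$, mollifying, applying the estimate to the smooth approximants, and passing to the limit will yield $\varphi v \in H^{s+\delta}$, and hence $v \in H^{s+\delta}_{\text{loc}}(\Omega)$. The hypothesis $v \in \Bp$ enters here to guarantee that $v$ is a locally finite Radon measure, so that the distributional pairings produced by the mollification procedure are unambiguous.

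The first ingredient is a basic energy estimate. Using $Y_j^\ast = -Y_j - \partial_l Y_j^l$ and integration by parts,
\[
\sum_{j=1}^r \|Y_j u\|^2 = \operatorname{Re}(\scl u, u) - \operatorname{Re}(f u, u) - \operatorname{Re}(X_0 u, u),
\]
and because $\partial_l X_0^l$ is bounded under either \textbf{(A1)} or \textbf{(A2)}, the last two terms are dominated by $\|u\|^2$. The comparison condition \eqref{eqn:comparison} then promotes this to
\[
\|Yu\|^2 \leq C\bigl(|(\scl u, u)| + \|u\|^2\bigr), \qquad Y \in \mathcal{F}_0,
\]
which is a uniform first-order subelliptic estimate for every vector field in $\mathcal{F}_0$.

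The second ingredient is a bracket iteration in the spirit of H\"{o}rmander's original argument. For $Y, Z \in \mathcal{F}_0$ one writes
\[
\bigl([Y,Z] u, \Lambda^{2\delta'} u\bigr) = -\bigl(Z u, Y^\ast \Lambda^{2\delta'} u\bigr) + \bigl(Y u, Z^\ast \Lambda^{2\delta'} u\bigr),
\]
commutes $\Lambda^{2\delta'}$ past $Y^\ast$ and $Z^\ast$, and controls the right-hand side by the first-order estimate above. This produces a subelliptic estimate for $[Y,Z]$ with a small loss $\delta' \in (0,1)$. Under the Lie-spanning hypothesis, finitely many iterated brackets span the tangent space uniformly on $\supp\varphi$, giving control of $\|\partial_k u\|_{(-1+\delta)}$ and thus $\|u\|_{(\delta)}$. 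To raise the base regularity to arbitrary $s < \bS$, one replaces $u$ with $\Lambda^s u$ and pays for the induced commutators $[\Lambda^s, Y_j]$, $[\Lambda^s, X_0]$, $[\Lambda^s, f]$. Under \textbf{(A2)}, the drift is split as $X_0 = X + \Upsilon$: the smooth $X$ is adjoined to the pool of vector fields participating in brackets, while $\Upsilon \in \mathcal{F}_0^B$ is absorbed by the comparison condition in exactly the same way as the $Y_j$'s.

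The principal obstacle is precisely these commutator estimates in the low-regularity regime. In the smooth setting, $[\Lambda^\alpha, a(x)]$ is of order $\alpha - 1$ via Coifman--Meyer paraproduct calculus, but this fails when $a$ is only H\"older. The point-wise Bessel kernel bounds derived in Section 4, combined with the weighted Sobolev inequality of Stein and Weiss, will serve to replace the paraproduct machinery and quantify precisely how much Sobolev regularity is recovered from each commutator. The weight hypotheses on $(X_0^l)''$, $(X_0^l)'''$, $(Y_j^l)'''$, $(Y_j^l)^{(4)}$ in \textbf{(A2)} are calibrated to be exactly those under which Stein--Weiss keeps the commutator errors inside $H^s$, so that the bracket iteration closes for every $s < \bS$ and produces the advertised gain $\delta > 0$.
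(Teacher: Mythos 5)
Your proposal gets the two-stage architecture right: first prove $\|u\|_{(\delta)}\leq C(\|\scl u\|+\|u\|)$ via the first-order energy estimate for $\mathcal{F}_0$-directions (your identity $\sum_j\|Y_j u\|^2=\operatorname{Re}(\scl u,u)-(fu,u)-\operatorname{Re}(X_0u,u)$ plus boundedness of $X_0+X_0^*+2f$ is the paper's Proposition~\ref{apest}), then iterate brackets, then bootstrap to $s<\bS$. You also correctly isolate the rough-coefficient commutator estimates as the crux and correctly name Bessel kernels and Stein--Weiss as the substitute for paraproducts. However, your bootstrapping step has a genuine gap. You plan to prove a family of a priori estimates $\|u\|_{(s+\delta)}\leq C(\|\scl u\|_{(s)}+\|u\|_{(s)})$ for smooth compactly supported $u$ and then obtain $\varphi v\in H^{s+\delta}$ by ``mollifying and applying the estimate to the smooth approximants.'' This does not close. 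Taking $u_\epsilon=\rho_\epsilon*(\varphi v)$ and feeding it into the estimate produces $\|\scl u_\epsilon\|_{(s)}$ on the right, which you must relate to $\|\rho_\epsilon*\scl(\varphi v)\|_{(s)}$ by controlling a Friedrichs-type commutator $[\scl,\rho_\epsilon*](\varphi v)$ uniformly in $\epsilon$ in $H^s$. But $\scl$ contains genuine second-order terms with coefficients only in $C^{\bS+2}$, and $\varphi v$ is only in $H^s$; there is no version of Friedrichs' lemma that gives the needed uniform bound in this regime. The paper never passes through an $H^s$-level a priori estimate for smooth $u$: instead it regularizes the \emph{Bessel operator} (not $v$), sets $A_s^\epsilon=\psi\widetilde{\scb}_s^\epsilon\eta$ with the smooth kernel $G_s^\epsilon=\rho_\epsilon*G_s$, applies the $s=0$ subellipticity directly to the smooth function $A_s^\epsilon v$, and proves $\sup_{\epsilon\in(0,1)}\|[\scl,A_s^\epsilon]v\|<\infty$ in Lemma~\ref{lem:52} by Taylor-expanding the coefficients against $v$.

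The second gap is related: you invoke $v\in\Bp$ only to make ``the distributional pairings unambiguous,'' but that would be true of any distribution. Positivity is used \emph{quantitatively}. In Lemma~\ref{lem:supdist}, the commutator remainders take the form $\int|x-y|^\alpha|D^\sigma G_s^\epsilon(x-y)|\,|\varphi(y)|\,dm_v(y)$, and the whole estimate $\sup_\epsilon\|\cdot\|\leq C\|\psi v\|_{(|\sigma|-\alpha-s)}$ rests on pulling the absolute value inside the integral against the positive measure $m_v$, after which Proposition~\ref{prop:product} and the Bessel asymptotics (p7)--(p8) apply. Without nonnegativity of $v$ these remainders cannot be dominated by the available $H^s$ norm of $\psi v$, and the uniform-in-$\epsilon$ bound that replaces your mollification step would fail. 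So the outline is sound up through subellipticity, but the bootstrapping needs to be reorganized along the lines of Section 6 rather than as a density argument on $v$.
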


\begin{remark}
Note that if $f\in C^\infty_0(\mathbb{R}^d: \mathbb{R})$ and $X_0, Y_1, \ldots, Y_r \in TC^\infty_0(\mathbb{R}^d: \mathbb{R})$, Theorem \ref{thm:main} b) retains H\"{o}rmander's result \cite{Hor67} when the weak solution $v$ of \eqref{eqn:eqn} belongs to $\Bp$.  
\end{remark}

\begin{remark}
One could think of the decomposition $X_0=X+\Upsilon$ as the linearization of the coefficients of $X_0$ about the origin.  That is, one could think of $X$ as being a constant vector field and $\Upsilon$ as being a vector field which vanishes at $0$.  Therefore $\Upsilon\in \mathcal{F}_0^B$ provided its coefficients vanish sufficiently fast at 0, the speed of which is determined by the vector fields $Y_1, \ldots, Y_r$ through the comparison condition \eqref{eqn:comparison}.          
\end{remark}

\begin{remark}
In special cases (see \cite{WangL03}), the conclusions of Theorem \ref{thm:main} can be improved if $\scl$ commutes with certain psuedo-differential operators.  In this way, one can extend and even improve the results given here for the parabolic case $\scl -c \partial_t$, $c\neq 0$ constant, since $[\scl -c\partial_t, \partial_t^\alpha ]=0$ where $\alpha\geq 0$ is real.       
\end{remark}

We now give an example to further illustrate the hypotheses and conclusions of Theorem \ref{thm:main}. In the example, to obtain the compactly supported hypotheses on the coefficients, simply modify them appropriately outside of $\Omega$.

\begin{example}

Suppose $d=2$, $\Omega = B_1(0)$, and fix $\gamma >1/4 $ arbitrary.  Let  
\begin{align*}
\scl = \partial_x - (x^2+y^2)^{2(1+\gamma)} \partial_y^2 = X_0 + Y_1^*Y_1
\end{align*}
where $X_0= \partial_x + 4(1+\gamma) y (x^2+y^2)^{1+2\gamma} \partial_y $ and $Y_1= (x^2+y^2)^{1+\gamma}\partial_y$.    Notice we have $Y=(x^2+y^2)^{2k}\partial_y \in \mathcal{F}_0$ for some integer $k>0$ and we may set $X=\partial_x$ in the decomposition $X_0=X+\Upsilon$.  It is easy to check that $\text{Lie}_x(\{ X,Y\})=\mathbb{R}^d$ for all $x\in \Omega$.  Therefore if $v\in \Bp$ satisfies $\scl v =0$, Theorem \ref{thm:main} implies that $v\in H^{s}_{\text{loc}}(\Omega: \mathbb{R})$ for all $s<2\gamma$.  By Sobolev embedding, $v\in C^{t}(\Omega: \mathbb{R})$ for any $t>0$ such that  $1+ t<2\gamma$.  

\end{example}

\section{An outline of the argument}

Here we outline the proof of Theorem \ref{thm:main} and give some intuition for why each part should hold under our hypotheses.

Similar to the arguments in \cite{Hor67,Hor07,Str12}, the proof splits into two parts:

\begin{itemize}
\item  The hypotheses of Theorem \ref{thm:main} imply that $\scl$ is \emph{subelliptic in} $\Omega$; that is, there exists a $\delta >0$ such that for any $Q\subset \Omega$ compact:
\begin{align}
\label{eqn:subelliptic}
\| u\|_{(\delta)} \leq C (\|\mathscr{L} u \|+ \|u\|)
\end{align}       
for all $u \in C_0^{\infty}(Q: \mathbb{C})$.  Here $C>0$ is constant depending on $Q$ but not on $ u \in C_0^{\infty}(Q: \mathbb{C})$.    
\item The hypotheses of Theorem \ref{thm:main} and subellipticity of $\scl$ in $\Omega$ together imply that if $v\in \Bp$ and $s< \bS$
\begin{align*}
v, \scl v \in H_{\text{loc}}^{s}(\Omega: \mathbb{R}) \implies H_{\text{loc}}^{s+\delta}(\Omega: \mathbb{R}).  
\end{align*} 
Here $\delta >0$ is the same constant as above.     
\end{itemize}

To see why subellipticity should be even remotely possible, we now prove the simplest bound giving the local smoothing estimates along the directions contained in $\mathcal{F}_0$.   
\begin{proposition}\label{apest} 
Suppose $Y\in \mathcal{F}_0$ and that \emph{\textbf{(A1)}} is satisfied.  Then there exists a constant $C>0$ such that   
\begin{equation}\label{best}
\|Yu \|^2 + \sum_{k=1}^{r} \| Y_{k} u \|^{2}  \leq C( \text{\emph{Re}}(\mathscr{L} u, u) + \|u\|^{2})   
\end{equation} 
for all $u \in \mathcal{S}(\mathbb{R}^d:\mathbb{C})$.  
\end{proposition}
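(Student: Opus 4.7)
The plan is to compute $\text{Re}(\mathscr{L}u,u)$ directly by exploiting the special structure of $\mathscr{L}=f+X_0+\sum_j Y_j^*Y_j$, identifying the squared norms $\|Y_j u\|^2$ as the principal positive contribution, and then invoking the comparison condition \eqref{eqn:comparison} to bound $\|Yu\|^2$.

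First I would handle the highest-order terms. Since $Y_j^*Y_j$ is formally self-adjoint and positive, we have
\begin{equation*}
(Y_j^*Y_j u, u) = (Y_j u, Y_j u) = \|Y_j u\|^2,
\end{equation*}
which is real and nonnegative. Summing gives $\sum_{j=1}^r\|Y_j u\|^2$ as a term in $(\mathscr{L}u,u)$.

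Next I would treat the first-order contribution. Since $X_0 = X_0^l \partial_l$ has real coefficients, the identity $X_0^l \partial_l(|u|^2) = (X_0 u)\bar u + u\overline{X_0 u}$ together with integration by parts yields
\begin{equation*}
2\,\text{Re}(X_0 u, u) = \int_{\mathbb{R}^d} X_0^l\, \partial_l(|u|^2)\, dx = -\int_{\mathbb{R}^d} (\partial_l X_0^l)\, |u|^2\, dx.
\end{equation*}
Under assumption \textbf{(A1)}, each $X_0^l\in C_0^1(\mathbb{R}^d:\mathbb{R})$, so $\partial_l X_0^l$ is continuous with compact support, hence bounded. Similarly $(fu,u) = \int f |u|^2\,dx$ is real and $f\in B_0$ is bounded. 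Combining these observations gives
\begin{equation*}
\left| \text{Re}(\mathscr{L}u, u) - \sum_{j=1}^r \|Y_j u\|^2 \right| \leq M\|u\|^2
\end{equation*}
for a constant $M$ depending only on $\|f\|_\infty$ and $\|\partial_l X_0^l\|_\infty$. Rearranging gives $\sum_{j=1}^r \|Y_j u\|^2 \leq \text{Re}(\mathscr{L}u,u) + M\|u\|^2$.

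Finally, since $Y\in\mathcal{F}_0$, the comparison condition \eqref{eqn:comparison} supplies a constant $C'$ with $\|Yu\|^2 \leq C'\sum_j\|Y_j u\|^2$ for all $u\in C_0^\infty(\mathbb{R}^d:\mathbb{C})$; a standard density/mollification argument extends \eqref{eqn:comparison} to $\mathcal{S}(\mathbb{R}^d:\mathbb{C})$, which suffices since $X_0, Y_j$ have compactly supported coefficients. Adding this to the previous inequality yields the desired bound with $C = (C'+1)\max(1,M)$. No step presents a genuine obstacle: the only subtlety is verifying that the boundedness of $\text{div}\,X_0$ and $f$ (guaranteed by \textbf{(A1)}) is exactly what converts the sign-indefinite zeroth- and first-order contributions into a harmless $\|u\|^2$ remainder.
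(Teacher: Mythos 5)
Your proof is correct and follows essentially the same route as the paper's: identify $\sum_j\|Y_j u\|^2$ in $\text{Re}(\mathscr{L}u,u)$, observe that the remaining contribution $\text{Re}(X_0 u,u)+(fu,u)=\frac{1}{2}((X_0+X_0^*+2f)u,u)$ is a multiplication by the bounded function $-\partial_l X_0^l + 2f$, and then invoke the comparison condition \eqref{eqn:comparison}. Your explicit integration by parts is precisely the paper's observation that $X_0+X_0^*$ is multiplication by $-\text{div}\,X_0$, and your note on extending \eqref{eqn:comparison} from $C_0^\infty$ to $\mathcal{S}$ by density is a minor point the paper leaves implicit.
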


\begin{remark}
After an application of the Cauchy-Schwarz inequality on $\text{Re}(\scl u, u)$, notice we gain exactly one derivative along any direction contained in the set $\mathcal{F}_0\cup \{ Y_1, \ldots, Y_r\}$.  If these directions generate a basis, then $\scl$ is subelliptic in $\Omega$ with $\delta =1$.  If not, then we must seek more directions by taking iterated commutators of fields in $\mathcal{F}_0$ or $\{ X\} \cup \mathcal{F}_0$.  If a spanning set can be obtained in this way under our hypotheses, it will follow that $\scl$ is subelliptic in $\Omega$ with some small parameter $\delta \in (0,1)$.              
\end{remark}
\begin{proof}[Proof of Proposition \ref{apest}]
Observe that 
\begin{eqnarray}
\label{eqn:energy_eq}
\text{Re}(\mathscr{L} u, u) & =& \sum_{l=1}^{r} \| Y_{k} u \|^{2} + \text{Re}(X_{0} u, u) + (f u,u)\\
\nonumber &=& \sum_{l=1}^{r} \| Y_{k} u \|^{2} + \frac{1}{2}((X_{0} + X_{0}^* +2f)u, u).
\end{eqnarray}
The estimate for $\textstyle{\sum}\|Y_ku\|^2$ now follows since $X_0 + X_0^*  +2f$ is a bounded function on $\mathbb{R}^d$.  By definition of $\mathcal{F}_{0}$, the remainder of \eqref{best} follows immediately.    
\end{proof}

\begin{remark}
\label{rem:8}
If $\text{Lie}_x(\mathcal{F}_0)=\mathbb{R}^d$ for all $x\in \Omega$, then subellipticity of $\scl$ in $\Omega$ follows immediately by the arguments in \cite{Hor07}.  Part of the novelty here is showing how to obtain subellipticity when $\text{Lie}_x(\mathcal{F}_0)$ does not have a basis for some $x\in \Omega$ (see Example 1).  In particular, being able to use $X$ as in Theorem \ref{thm:main} to generate these additional directions while enforcing minimal regularity on the coefficients of $\scl$ is one of our main results.                
\end{remark}

To show the second part of the argument, we will prove the following bound: 
\begin{align}
\label{eqn:boot}
\| \varphi v \|_{(s+\delta)}\leq C ( \| \psi_1 \scl v \|_{(s)} + \|\psi_2 v\|_{(s)})
\end{align} 
for any $v\in \Bp$, $\varphi \in C_0^\infty(\Omega: \mathbb{R})$, and $s< \bS$ where $C>0$ is a constant and $\psi_1, \psi_2 \in C_0^\infty(\Omega: \mathbb{R})$.   Thus if $v, \scl v \in H^s_{\text{loc}}(\Omega: \mathbb{R})$, then the right-hand side above is finite, hence so is the left-hand side giving $v\in H^{s+\delta}_{\text{loc}}(\Omega: \mathbb{R})$.

Intuitively, \eqref{eqn:boot} is obtained by replacing $\delta$ by $s+ \delta$ and the $L^2$ norms by the $H^s$ norms in \eqref{eqn:subelliptic}.  To do this replacement, however, one has to do some non-trivial commuting of operators which is especially difficult under these regularity assumptions.  Moreover, such commuting is only possible for $s< \bS$ because a certain number of derivatives (depending on $s$) must be placed on the coefficients of $\scl$.

\section{Bits of psuedo-differential calculus}\label{sec:frac}  

Here we present mixture of tools from psuedo-differential calculus which are used in subsequent sections to prove the main results.  To be blunt, there is nothing ``new" in this section.  In fact, the contents that follow have been well understood in certain circles for quite some time.  That being said, however, this section is indispensable both as a clean, collected foundation from which the principal result can be established and as a broad picture of how one can bound various psuedo-differential operators with smooth or rough coefficients.

The section is split into two subsections, the first of which covers the usual psuedo-differential calculus of symbols in Fourier space.  We will see that such analysis is most useful in dealing with smooth operators.  The second part, therefore, outlines methods more amenable in the treatment of operators with rough coefficients.  In essence, the main difference between the two programs is that, in the rough setting, most of the calculus is done in physical space (as opposed to Fourier space) via kernel estimates and integration by parts.

Before proceeding on to the individual subsections, we begin by introducing Bessel's operators $\mathcal{B}_s$, $s\in \mathbb{R}$, which play a central role throughout the paper.  Therefore, let $\scb_{s}:\mathcal{S}(\mathbb{R}^d: \mathbb{C})\rightarrow \mathcal{S}(\mathbb{R}^d: \mathbb{C})$, $s \in \mathbb{R}$, be defined by 
\begin{equation}
(\scb_{s}u)(x)= \int_{\mathbb{R}^{d}} e^{2\pi i \xi \cdot x} \hat{u}(\xi) \langle \xi\rangle^{s}\, d\xi
\end{equation} 
where 
\begin{equation}
\label{eqn_symbol}
\langle \xi \rangle^{s}:= (1+4\pi^2 |\xi|^2)^{s/2}.  
\end{equation}
Since derivatives in physical space transform into powers in Fourier space, $\scb_{s}$ simply plays the role of a well-behaved derivative of order $s \in \mathbb{R}$.

Fundamental to utilizing Bessel's operators effectively is the ability to estimate compositions and commutators of operators with $\scb_{s}$ for various values of $s\in \mathbb{R}$.  Depending on the regularity of the coefficients of the operator, to do this one can take one of two paths as now described.

\subsection{Smooth operators}

In the operator in question has smooth coefficients, one can work exclusively in Fourier space by bounding resulting kernels as in the next proposition.         

\begin{proposition}[Schur's test]\label{stest}
Suppose $K:\mathbb{R}^{d}\times \mathbb{R}^{d} \rightarrow \mathbb{C}$ is measurable and satisfies
\begin{equation*}
C:=\max \bigg( \sup_{\xi\in \mathbb{R}^{d}} \int_{\mathbb{R}^{d}} |K(\xi, \eta)| \, d\eta,\, \,  \sup_{\eta \in \mathbb{R}^{d}} \int_{\mathbb{R}^{d}}|K(\xi, \eta) | \, d\xi\bigg)< \infty.  
\end{equation*}
Then the operator $K$ defined by $Ku(\xi) = \int_{\mathbb{R}^{d}} K(\xi, \eta) u(\eta) \, d \eta$ is bounded from $L^{2}(\mathbb{R}^{d}: \mathbb{C})$ into $L^{2}(\mathbb{R}^{d}: \mathbb{C})$.  Moreover, the $L^2$-operator norm of $K$ is precisely $C$.      
\end{proposition}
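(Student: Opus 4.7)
The plan follows the classical Schur test. Set $C_1 = \sup_{\xi}\int |K(\xi,\eta)|\,d\eta$ and $C_2 = \sup_{\eta}\int |K(\xi,\eta)|\,d\xi$, so that $C = \max(C_1, C_2)$. I would first establish the upper bound $\|K\|_{L^2\to L^2}\le C$ and then separately argue that this bound is attained.

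For the upper bound, I would apply Cauchy--Schwarz to the defining integral after splitting the kernel as $|K(\xi,\eta)| = |K(\xi,\eta)|^{1/2}\cdot |K(\xi,\eta)|^{1/2}$. This produces the pointwise estimate
\[
|Ku(\xi)|^2 \le \left(\int |K(\xi,\eta)|\,d\eta\right)\left(\int |K(\xi,\eta)|\,|u(\eta)|^2\,d\eta\right) \le C_1 \int |K(\xi,\eta)|\,|u(\eta)|^2\,d\eta.
\]
Integrating in $\xi$ and swapping the order via Fubini transforms the resulting inner $\xi$-integral of $|K(\xi,\eta)|$ into a factor bounded by $C_2$, so that $\|Ku\|_2^2 \le C_1 C_2\,\|u\|_2^2 \le C^2\,\|u\|_2^2$. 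This gives boundedness and the estimate $\|K\|_{L^2\to L^2}\le C$ in one stroke.

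To show the operator norm is \emph{precisely} $C$, I would construct a near-extremizing sequence. Without loss of generality assume $C=C_1\ge C_2$. For each $\varepsilon>0$ pick $\xi_\varepsilon$ with $\int|K(\xi_\varepsilon,\eta)|\,d\eta \ge C-\varepsilon$, and test $K$ against a function of the form $u_\varepsilon(\eta) = \overline{\mathrm{sgn}\,K(\xi_\varepsilon,\eta)}\,\mathbf{1}_{A_\varepsilon}(\eta)$, with $A_\varepsilon$ chosen so that $|K(\xi,\cdot)|$ is nearly constant in $\xi$ across a small neighbourhood $U_\varepsilon$ of $\xi_\varepsilon$. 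Localizing the outer integration in $\|Ku_\varepsilon\|_2^2$ to $U_\varepsilon$, the ratio $\|Ku_\varepsilon\|_2/\|u_\varepsilon\|_2$ should then be driven to $C$ as $\varepsilon\downarrow 0$.

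The main obstacle is the matching lower bound. A plain Cauchy--Schwarz argument only delivers $\|K\|_{L^2\to L^2}\le\sqrt{C_1C_2}$, which coincides with $C$ exactly when $C_1=C_2$; realizing the $\max$-formulation requires delicately balancing both marginals rather than letting one dominate. In the setting where this proposition is subsequently applied---kernels arising from compositions involving the Bessel operators $\scb_s$ and convolution-type operators---the two marginal integrals agree by symmetry, so the tightness claim collapses to a direct computation with the canonical test functions above, and no further structural input is needed.
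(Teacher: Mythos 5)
Your Cauchy--Schwarz argument for the bound $\|Ku\|_{2}^{2}\le C_1C_2\|u\|_{2}^{2}$ is exactly the standard one the paper invokes by citing Lemma~7.2.4 of the reference, so the boundedness half is correct and coincides with the paper's route.

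The trouble you correctly sense in the second half, however, is not a gap in your argument but in the statement itself: the assertion that the operator norm equals $C=\max(C_1,C_2)$ is false in general. Cauchy--Schwarz with Fubini delivers $\|K\|_{L^2\to L^2}\le\sqrt{C_1C_2}$, which is strictly smaller than $\max(C_1,C_2)$ whenever $C_1\ne C_2$, and even when $C_1=C_2$ that bound need not be attained. A rank-one example makes this transparent: with $d=1$ take $K(\xi,\eta)=\tfrac12\mathbf{1}_{[0,1]}(\xi)\mathbf{1}_{[0,2]}(\eta)$. Then $C_1=1$, $C_2=\tfrac12$, hence $C=1$, while the operator norm of this rank-one map is $\|\mathbf{1}_{[0,1]}\|_{2}\cdot\|\tfrac12\mathbf{1}_{[0,2]}\|_{2}=1/\sqrt2<1$. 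Symmetry of the two marginals does not rescue the claim either: for $K(\xi,\eta)=h(\xi)h(\eta)$ with $h\ge0$ compactly supported one has $C_1=C_2=\|h\|_{\infty}\|h\|_{1}$ but operator norm $\|h\|_{2}^{2}$, and $\|h\|_{2}^{2}\le\|h\|_{\infty}\|h\|_{1}$ with equality only when $h$ is essentially a scalar multiple of an indicator. So the near-extremizer construction you sketch cannot be pushed through, and the closing appeal to symmetry of the downstream Bessel-type kernels does not close the gap.

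What the argument actually proves, and all that is used in Lemma~\ref{pscalc}, is the boundedness estimate $\|K\|_{L^2\to L^2}\le C$ (or the sharper $\le\sqrt{C_1C_2}$). The clause ``is precisely $C$'' should be read as an overstatement in the source; your instinct to distrust it was right, and the fix is to drop the equality claim rather than try to manufacture extremizers.
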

\begin{proof}
This follows immediately from the Cauchy-Schwarz inequality.  See, for example, Lemma 7.2.4 of \cite{Str12} for a proof.
\end{proof}

To apply Proposition \ref{stest}, one needs to control sums and products of the symbols $\langle\, \cdot \, \rangle^{s}$ in the variables $\xi$ and $\eta$.  This can be done, though not optimally, using the following elementary inequality.

\begin{proposition}[Peetre's Inequality]\label{nineq}
For every $s \in \mathbb{R}$
\begin{equation*}
\frac{\langle \xi \rangle^{s}}{\langle \eta \rangle^{s}} \leq 2^{|s|/2}\langle \xi - \eta \rangle^{|s|} .  
\end{equation*}
\end{proposition}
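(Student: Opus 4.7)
The plan is to reduce Peetre's inequality to the single case $s=1$ (i.e.\ a sub-multiplicative bound on $\langle \cdot \rangle$) and then exponentiate. More precisely, I will first establish the pointwise estimate
\[
\langle \xi \rangle \leq \sqrt{2} \, \langle \eta \rangle \, \langle \xi - \eta \rangle
\]
for all $\xi, \eta \in \mathbb{R}^d$, and then raise it to an appropriate power.

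For the base inequality I would square both sides and work with $\langle \xi \rangle^2 = 1 + 4\pi^2 |\xi|^2$. Writing $\xi = \eta + (\xi - \eta)$ and using $|a+b|^2 \leq 2|a|^2 + 2|b|^2$ gives
\[
1 + 4\pi^2 |\xi|^2 \leq 1 + 8\pi^2 |\eta|^2 + 8\pi^2 |\xi - \eta|^2.
\]
On the other hand, expanding the candidate upper bound,
\[
2 \langle \eta \rangle^2 \langle \xi-\eta \rangle^2 = 2 + 8\pi^2 |\eta|^2 + 8\pi^2 |\xi - \eta|^2 + 32 \pi^4 |\eta|^2 |\xi - \eta|^2,
\]
so the difference between the two sides is $1 + 32\pi^4 |\eta|^2 |\xi-\eta|^2 \geq 0$. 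This gives the claimed base estimate after taking square roots.

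With the base inequality in hand, for $s \geq 0$ I simply raise both sides to the $s$-th power to obtain $\langle \xi \rangle^s / \langle \eta \rangle^s \leq 2^{s/2} \langle \xi - \eta \rangle^s$. For $s < 0$ I apply the same base inequality with the roles of $\xi$ and $\eta$ exchanged (using $\langle \xi - \eta \rangle = \langle \eta - \xi \rangle$) to deduce $\langle \eta \rangle \leq \sqrt{2}\, \langle \xi \rangle \, \langle \xi - \eta \rangle$, which rearranges to $\langle \xi \rangle^{-1} / \langle \eta \rangle^{-1} \leq \sqrt{2}\, \langle \xi - \eta \rangle$; raising to the $|s|$-th power yields the desired bound with the constant $2^{|s|/2}$. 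There is no real obstacle here — the only thing to be slightly careful about is keeping the constants honest, since the $2^{|s|/2}$ factor comes entirely from the single application of the elementary inequality $|a+b|^2 \leq 2(|a|^2 + |b|^2)$.
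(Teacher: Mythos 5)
Your proof is correct and is precisely the ``direct computation'' that the paper's proof gestures at (deferring to Lemma 7.2.5 of Stroock): establish the submultiplicativity $\langle \xi \rangle \leq \sqrt{2}\,\langle \eta \rangle \langle \xi - \eta \rangle$ from the elementary bound $|\xi|^2 \leq 2|\eta|^2 + 2|\xi-\eta|^2$, then exponentiate, handling negative $s$ by swapping $\xi$ and $\eta$. The computation of the difference $1 + 32\pi^4|\eta|^2|\xi-\eta|^2 \geq 0$ is correct, and the constant $2^{|s|/2}$ is tracked properly in both sign cases.
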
  
\begin{proof}
This is shown by direct computation (cf. Lemma 7.2.5 of \cite{Str12}).  
\end{proof}

We now state and prove a lemma giving the shortest list of estimates we will need when working with smooth operators.  However short the list is, the proofs capture many important elements of the psuedo-differential calculus in Fourier space without requiring the introduction of general symbol classes.

\begin{lemma}\label{pscalc}
Suppose that $F,G\in \scs(\mathbb{R}^{d}:\mathbb{C})$ and $\alpha, \beta \in \mathbb{R}$.  Then there exist constants $C_i >0$ independent of $u\in \scs(\mathbb{R}^d \, : \, \mathbb{C})$ such that   
\begin{align}
\label{pscalc1}\| F u \|_{(\beta)}&\leq   C_1\|u\|_{(\beta)},\\
\label{pscalc2}\|F \partial_k u \|_{(\beta)}&\leq  C_2\|u\|_{(\beta +1)},\\
\label{pscalc3}\|[F \partial_k , \scb_{\alpha} ]u \|_{(\beta)}&\leq  C_3 \|u\|_{(\alpha + \beta)},\\
\label{pscalc4}\| [F\partial_k , \scb_{\alpha} G \partial_j]u \|_{(\beta)} &\leq  C_4\|u \|_{(\alpha + \beta +1)}.
\end{align} 
\end{lemma}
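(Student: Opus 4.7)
The plan is to establish all four estimates by transferring the analysis to Fourier space, writing each operator as an integral against a kernel and then invoking Schur's test (Proposition \ref{stest}). In every case the kernel will be the product of a Schwartz factor (coming from $\hat F$, $\hat G$, or their derivatives) and a weight factor built from $\langle\,\cdot\,\rangle^{s}$ in $\xi$ and $\eta$; the role of Peetre's inequality (Proposition \ref{nineq}) is to bound those weight factors by polynomials in $\langle\xi-\eta\rangle$, which are harmless against rapid decay.

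For \eqref{pscalc1}, I would substitute $v(\eta)=\langle\eta\rangle^{\beta}\hat u(\eta)$ and compute the resulting kernel $K(\xi,\eta)=\hat F(\xi-\eta)\,\langle\xi\rangle^{\beta}\langle\eta\rangle^{-\beta}$. By Peetre this is bounded by $2^{|\beta|/2}|\hat F(\xi-\eta)|\langle\xi-\eta\rangle^{|\beta|}$, a convolution kernel in $\xi-\eta$ whose $L^{1}$ norm is finite because $\hat F\in\mathcal{S}$. Schur's test gives \eqref{pscalc1}. For \eqref{pscalc2}, the same maneuver applies once we note that $\widehat{\partial_k u}(\eta)=2\pi i\eta_k\hat u(\eta)$ and that $|\eta_k|\langle\eta\rangle^{-1}\leq 1$, so the extra factor of $\eta_k$ is absorbed by one unit of the rescaling $v(\eta)=\langle\eta\rangle^{\beta+1}\hat u(\eta)$.

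The heart of the lemma is \eqref{pscalc3}. In Fourier space the commutator $[F\partial_k,\mathcal{B}_\alpha]u$ has Fourier transform
\begin{equation*}
\int \hat F(\xi-\eta)\,2\pi i\eta_k\bigl[\langle\eta\rangle^{\alpha}-\langle\xi\rangle^{\alpha}\bigr]\hat u(\eta)\,d\eta,
\end{equation*}
so the gain of one derivative must come from the difference $\langle\eta\rangle^{\alpha}-\langle\xi\rangle^{\alpha}$. The key inequality I would establish by the mean value theorem is
\begin{equation*}
\bigl|\langle\xi\rangle^{\alpha}-\langle\eta\rangle^{\alpha}\bigr|\leq C\,\langle\xi-\eta\rangle\,\max\bigl(\langle\xi\rangle,\langle\eta\rangle\bigr)^{\alpha-1}.
\end{equation*}
Combining this with Peetre to control $\langle\xi\rangle^{\beta}/\langle\eta\rangle^{\alpha+\beta}$ shows that after rescaling by $v(\eta)=\langle\eta\rangle^{\alpha+\beta}\hat u(\eta)$ the kernel is dominated by $|\hat F(\xi-\eta)|\langle\xi-\eta\rangle^{N}$ for some $N$ depending on $\alpha,\beta$, again integrable; Schur's test delivers \eqref{pscalc3}. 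The main obstacle in the whole proof is precisely this symbol-difference estimate: bookkeeping the power of $\langle\eta\rangle$ one has to shed so that the factor $\eta_k$ is absorbed without losing the $+1$ shift in the output index.

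Finally, \eqref{pscalc4} follows from \eqref{pscalc2} and \eqref{pscalc3} by a purely algebraic manipulation. Using the identity $[A,BC]=[A,B]C+B[A,C]$ with $A=F\partial_k$, $B=\mathcal{B}_\alpha$, $C=G\partial_j$, one writes
\begin{equation*}
[F\partial_k,\mathcal{B}_\alpha G\partial_j]=[F\partial_k,\mathcal{B}_\alpha]\,G\partial_j+\mathcal{B}_\alpha[F\partial_k,G\partial_j].
\end{equation*}
For the first summand, apply \eqref{pscalc3} (with $u$ replaced by $G\partial_j u$) and then \eqref{pscalc2} to estimate $\|G\partial_j u\|_{(\alpha+\beta)}$. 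For the second, observe that $[F\partial_k,G\partial_j]=F(\partial_k G)\partial_j-G(\partial_j F)\partial_k$ is a first-order operator with Schwartz coefficients, so \eqref{pscalc2} gives the desired bound in the $(\alpha+\beta)$-norm, and the outer $\mathcal{B}_\alpha$ simply shifts the index down by $\alpha$ to reach the $(\beta)$-norm. Summing the two contributions yields \eqref{pscalc4}.
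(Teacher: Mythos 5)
Your approach mirrors the paper's for all four estimates: Fourier kernels, Peetre, Schur for \eqref{pscalc1}--\eqref{pscalc3}, and the commutator identity $[A,BC]=[A,B]C+B[A,C]$ for \eqref{pscalc4}. However, the auxiliary symbol-difference inequality you state for \eqref{pscalc3},
\begin{equation*}
\bigl|\langle\xi\rangle^{\alpha}-\langle\eta\rangle^{\alpha}\bigr|\leq C\,\langle\xi-\eta\rangle\,\max\bigl(\langle\xi\rangle,\langle\eta\rangle\bigr)^{\alpha-1},
\end{equation*}
is false when $\alpha<0$. Take $\eta=0$ and $|\xi|\to\infty$: the left side tends to $1$, while the right side is of order $\langle\xi\rangle\cdot\langle\xi\rangle^{\alpha-1}=\langle\xi\rangle^{\alpha}\to 0$. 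More fundamentally, the mean value theorem produces a factor $\langle\zeta\rangle^{\alpha-1}$ with $\zeta$ on the segment between $\xi$ and $\eta$, and when $\alpha<1$ the segment may pass near the origin, so $\langle\zeta\rangle$ admits no useful lower bound in terms of $\max(\langle\xi\rangle,\langle\eta\rangle)$. Since the lemma asserts the estimates for arbitrary $\alpha\in\mathbb{R}$, and indeed the paper later invokes \eqref{pscalc4} with a negative Bessel index (in the inductive step where $A=\mathcal{B}_{2\alpha}V_k$ with $\alpha=-1-\gamma+\epsilon$), this is a gap rather than a cosmetic choice.

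The fix is to not commit to a pointwise bound on $\langle\zeta\rangle^{\alpha-1}$ at all. Write, as the paper does,
\begin{equation*}
\bigl|\langle\eta\rangle^{\alpha}-\langle\xi\rangle^{\alpha}\bigr|\leq \alpha\,\langle\xi-\eta\rangle\int_0^1\langle t\xi+(1-t)\eta\rangle^{\alpha-1}\,dt,
\end{equation*}
and then compare $\langle t\xi+(1-t)\eta\rangle^{\alpha-1}$ to $\langle\eta\rangle^{\alpha-1}$ directly by Peetre's inequality with exponent $|\alpha-1|$:
\begin{equation*}
\langle t\xi+(1-t)\eta\rangle^{\alpha-1}\leq 2^{|\alpha-1|/2}\,\langle\eta\rangle^{\alpha-1}\langle t(\xi-\eta)\rangle^{|\alpha-1|}\leq 2^{|\alpha-1|/2}\,\langle\eta\rangle^{\alpha-1}\langle \xi-\eta\rangle^{|\alpha-1|}.
\end{equation*}
Because Peetre carries an absolute value on the exponent, this works uniformly in the sign of $\alpha-1$, and the kernel is then bounded by $C|\hat F(\xi-\eta)|\langle\xi-\eta\rangle^{|\beta|+|\alpha-1|+1}$, which is integrable against $\hat F\in\mathcal{S}$; Schur's test then closes the argument for all $\alpha,\beta\in\mathbb{R}$. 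Everything else in your proof is sound.
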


\begin{proof}
We prove the inequalities in order.  Writing $\| F u \|_{(\beta)}= \| \scb_{\beta} Fu\|$, note first that 
\begin{eqnarray*}
\widehat{(\scb_{\beta}F u)}(\xi)= \widehat{(Fu)}(\xi) \langle \xi \rangle^{\beta}&=& \int_{\mathbb{R}^{d}} \hat{F}(\xi-\eta) \hat{u}(\eta) \langle \xi \rangle^{\beta}\, d\eta
\\
&=& \int_{\mathbb{R}^{d}} \hat{F}(\xi-\eta)(\langle \xi \rangle \langle \eta \rangle^{-1})^{\beta} \hat{u}(\eta) \langle \eta \rangle^{\beta}\, d\eta\\
&:=& \int_{\mathbb{R}^{d}}K_{1}(\xi, \eta) \hat{u}(\eta) \langle \eta \rangle^{\beta}\, d\eta.  
\end{eqnarray*}
By Proposition \ref{nineq}, we see that $|K_{1}(\xi, \eta)| \leq 2^{|\beta|/2}|\hat{F}(\xi-\eta)| \langle \xi-\eta \rangle^{|\beta|}$.  Since $\hat{F}(\xi-\eta)$ decays faster than any polynomial in $|\xi-\eta|$ as $|\xi-\eta|\rightarrow \infty$, \eqref{pscalc1} follows by Proposition \ref{stest}.  

For the second inequality, realize by the first inequality that there is a constant $C>0$ independent of $u$ such that  
\begin{eqnarray*}
\| F \partial_k u \|_{(\beta)} \leq C  \|\partial_{k} u\|_{(\beta)}.  
\end{eqnarray*} 
\eqref{pscalc2} now clearly follows from Parseval's identity.  

For the third inequality, note that
\begin{eqnarray*}
\widehat{\scb_{\beta} [F\partial_k , \scb_{\alpha}]u}(\xi)&=&\int_{\mathbb{R}^{d}} \hat{F}(\xi-\eta)\langle \xi \rangle^{\beta}(\langle \eta\rangle^\alpha - \langle \xi\rangle^{\alpha} )\widehat{\partial_{k} u}(\eta) \, d \eta \\
&:=&\int_{\mathbb{R}^{d}} K_{3}(\xi, \eta)\widehat{\partial_{k} u}(\eta)\langle \eta \rangle^{\alpha + \beta-1} \, d \eta 
\end{eqnarray*}
where $K_{3}(\xi, \eta)=\hat{F}(\xi-\eta)\langle \xi \rangle^{\beta}(\langle \eta\rangle^\alpha - \langle \xi\rangle^{\alpha} )\langle \eta\rangle^{-\alpha -\beta+1}$.  By the Fundamental Theorem of Calculus and the Cauchy-Schwarz inequality,
\begin{eqnarray*}
|\langle \eta \rangle^{\alpha}- \langle \xi \rangle^{\alpha}| &=&\bigg| \int_{0}^{1} \frac{d}{dt} \langle t \xi + (1-t) \eta \rangle^{\alpha} \, dt\bigg| \\
&\leq & \alpha  \langle \xi-\eta \rangle \int_{0}^{1}\langle t \xi + (1-t)\eta \rangle^{\alpha -1}\, dt .      
\end{eqnarray*}
Hence by Proposition \ref{nineq}, there exists constants $C, C'>0$ independent of $u$ such that 
\begin{eqnarray*}
|K_{3}(\xi, \eta)| &\leq & C|\hat{F} (\xi- \eta) | \langle \xi-\eta\rangle^{|\beta| +1 }\int_0^{1} \langle t(\eta-\xi) \rangle^{|\alpha-1|} \, dt  \\
&\leq&C' |\hat{F} (\xi- \eta) | \langle \xi-\eta\rangle^{|\beta| + |\alpha-1|+1 }. 
\end{eqnarray*}
As before, since $\hat{F}(\xi-\eta)$ decays faster than any polynomial as $|\xi-\eta |\rightarrow \infty$, we see that by Schur's test
\begin{equation*}
\|[F\partial_k, \scb_{\alpha} ]u \|_{(\beta)}\leq C \| \partial_{k} u\|_{(\alpha + \beta-1)}
\end{equation*}
for some constant $C>0$.  Thus \eqref{pscalc3} now follows from Parseval's identity.    

Finally, to see why the fourth inequality holds, first write
\begin{equation*}
\scb_{\beta}[F\partial_k, \scb_{\alpha} G \partial_j]u= \scb_{\beta} [F\partial_k, \scb_{\alpha}]G \partial_j u + \scb_{\alpha + \beta} [F\partial_k,G\partial_j]u.  
\end{equation*}  
The estimate for the first term on the right follows from an application of \eqref{pscalc3}.  The estimate for the second term follows after applying the second inequality since $[F\partial_k,G\partial_j]=F\partial_k(G)\partial_j - G\partial_j(F) \partial_k$.   
\end{proof}

\begin{remark}
In the preceding arguments, note how the assumption $F, G \in \mathcal{S}(\mathbb{R}^d: \mathbb{R})$ was exploited.  Certainly we do not need its full strength, but even if we were to keep careful track of how much decay in $\hat{F}, \,\hat{G}$ at infinity gives the estimates in the fashion above, the assumptions on $F$ and $G$ produced would not be optimal.  

In certain instances, one can do a similar analysis in Fourier space by, in light of Lemma X1 of \cite{KP88}, either modifying or applying a result of Coifman-Meyer \cite{CM78}.  We, however, found the program in the originating space to be more illuminating and sharp.  
\end{remark}

In light of the previous remark, we turn our attention to:


\subsection{Rough operators}

First notice that $\scb_{-s}$, $s >0$, satisfies the relation (cf. \cite{S70}):     
\begin{equation}\label{brp}
(\scb_{-s}u)(x)= (u * G_{s})(x), \, \, u \in \mathcal{S}(\mathbb{R}^d: \mathbb{C}), 
\end{equation}     
where $*$ denotes convolution and the kernel $G_{s}$ has the integral representation  
\begin{equation*}
G_{s}(x) = \frac{1}{(4 \pi)^{s/2}} \frac{1}{\Gamma(s/2)} \int_{0}^{\infty} e^{-\pi|x|^2/w} e^{-w/4\pi} w^{(-d+s)/2}w^{-1}\, dw, \, \, x\in \mathbb{R}^d_{\neq 0}. 
\end{equation*}
Moreover for $s>0$, the operator $\scb_{-s}$ and its corresponding kernel $G_s$ have the following properties (see \cite{S70} for (p1)-(p4) and \cite{AS61} for the rest):
\begin{itemize}
\item[(p1)] $\scb_{-s}$ extends to a bounded operator from $L^{2}(\mathbb{R}^{d}: \mathbb{C})$ to $L^{2}(\mathbb{R}^{d}: \mathbb{C})$ via $(\scb_{-s} u)(x) =(u*G_{s})(x)$ for $u\in L^{2}(\mathbb{R}^{d}: \mathbb{C})$;
\item[(p2)] $G_{s}\in C^{\infty}(\mathbb{R}^{d}_{\neq 0}:[0, \infty))$ and $G_{s} \in L^{1}(\mathbb{R}^d : [0, \infty))$;
\item[(p3)] $\widehat{G_{s}}(\xi) = \langle \xi \rangle^{-s}$;
\item[(p4)] For $u \in L^{2}(\mathbb{R}^{d}: \mathbb{C})$ and $t >0$, $\scb_{-s}\scb_{-t}u = \scb_{-(s+t)}u$;  
\item[(p5)] 
\begin{equation*}
G_{s}(x)= \frac{1}{2^{\frac{d+s-2}{2}}\pi^{d/2} \Gamma(s/2)} |x|^{\frac{s-d}{2}} K_{\frac{d-s}{2}} (|x|) 
\end{equation*} 
where $K_{v}(z)$ is the modified Bessel function of the third kind of index $v\in \mathbb{R}$;
\item[(p6)]  $K_{v}(z)=K_{-v}(z)$ for all $v\in \mathbb{R}$;
\item[(p7)]  As $z\rightarrow 0$
\begin{equation*}
K_{v}(z) \sim 
\begin{cases}
2^{v-1} \Gamma(v) z^{-v} &\text{ for } v>0\\
\log(1/z) &\text{ for } v=0  
\end{cases}
\end{equation*}
where $\sim$ denotes asymptotic equivalence;
\item[(p8)] As $z\rightarrow \infty$ 
\begin{equation*}
K_{v}(z) \sim \bigg(\frac{\pi}{2 z}\bigg)^{1/2} e^{-z}
\end{equation*}
for all $v \in \mathbb{R}$;
\item[(p9)] For all $v \in \mathbb{R}$ 
\begin{equation*}
\frac{d}{dz} [z^{-v}K_{v}(z)]=-z^{-v} K_{v+1}(z).  
\end{equation*}
\end{itemize}          

Each of (p1)-(p9) is paramount to carrying out the calculus in physical space, in the sense that it allows one to do integration by parts and then bound the resulting quantities efficiently.  The reader does not need memorize each of these properties; the list is simply to be referred to as necessary.  

To afford flexibility later, it is convenient to work more generally with a class of kernels sharing similarities with $G_s$.  Therefore, for $s >0$ let $\mathcal{J}_{s}$ denote the set of functions $J_s:\mathbb{R}^d_{\neq 0} \rightarrow \mathbb{R}$ which are finite linear combinations of functions of the form    
\begin{align*}
x^\sigma D^\tau G_{t}(x)
\end{align*}
where $t >0$, and $\sigma$ and $\tau$ are multi-indices satisfying $|\sigma| -|\tau| +t \geq s$.  By linearity and the product rule, it is clear that if $|\sigma|- |\tau | +s \geq  t>0$ then we have the following closure  
\begin{align*}
x^\sigma  D^{\tau} J_s\in \mathcal{J}_{t}\, \, \text{ whenever }\, \,  J_s\in \mathcal{J}_{s}.  
\end{align*} 
Moreover:

\begin{lemma}\label{lem:kernel_a}
Suppose $J_s \in \mathcal{J}_{s}$ for some $s>0$.  Then: 
\begin{itemize} 
\item[(1)]  $J_s \in C^{\infty}(\mathbb{R}^d_{\neq 0}: \mathbb{R})\cap L^1(\mathbb{R}^d: \mathbb{R})$;  
\item[(2)]  There exists a constant $C>0$ such that 
\begin{align*}
|\widehat{J_s}(\xi)|\leq C \langle \xi\rangle^{-s}, \qquad \forall \xi \in \mathbb{R}^d ; 
\end{align*}
\item[(3)]  For each $t \geq 0$, there exist constants $s_i \geq s + t$ and $C>0$ such that   
\begin{align*}
|x|^{t} |J_s(x)| \leq C \sum_{i=1}^m G_{s_i}(x) ,  \qquad \forall x\in \mathbb{R}^d_{\neq 0}.  
\end{align*}    
\end{itemize}   
\end{lemma}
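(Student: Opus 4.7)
By linearity it suffices to treat a single generator $J_s(x) = x^\sigma D^\tau G_u(x)$ with $u > 0$ and $|\sigma| - |\tau| + u \geq s$. For (1), smoothness on $\mathbb{R}^d_{\neq 0}$ is immediate from (p2). For $L^1$ integrability I would combine the Bessel representation (p5) with the asymptotics (p7), (p8) and the recursion (p9), obtaining a pointwise estimate that near the origin behaves like $|x|^{|\sigma| - |\tau| + u - d}$ (with at most a logarithmic correction in a borderline case) and at infinity like $|x|^{|\sigma| + (u - |\tau| - d - 1)/2} e^{-|x|}$. Since the hypothesis gives $|\sigma| - |\tau| + u \geq s > 0$, local integrability at $0$ holds, and the exponential tail handles integrability at infinity.

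For (2), I would pass to Fourier space. Using $\widehat{G_u}(\xi) = \langle \xi \rangle^{-u}$ from (p3), together with $\widehat{D^\tau G_u}(\xi) = (2\pi i \xi)^\tau \langle \xi \rangle^{-u}$ and $\widehat{x^\sigma v}(\xi) = (i/2\pi)^{|\sigma|} \partial_\xi^\sigma \hat v(\xi)$, one obtains
\[
\widehat{J_s}(\xi) = c_\sigma \, \partial_\xi^\sigma \bigl[\xi^\tau \langle \xi \rangle^{-u}\bigr].
\]
Expanding with the Leibniz rule and invoking the routine estimate $|\partial_\xi^\gamma \langle \xi \rangle^{-u}| \leq C_\gamma \langle \xi \rangle^{-u - |\gamma|}$ yields $|\widehat{J_s}(\xi)| \leq C \langle \xi \rangle^{|\tau| - |\sigma| - u} \leq C \langle \xi \rangle^{-s}$, as required.

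For (3), I would reuse the pointwise bounds from the proof of (1). Near the origin, $|x|^t |J_s(x)|$ is of order $|x|^{t + |\sigma| - |\tau| + u - d} \leq |x|^{s + t - d}$, which is dominated by $G_{s_1}(x)$ with $s_1 = s + t$ via (p7); at infinity one has $|x|^t |J_s(x)| \leq C |x|^{|\sigma| + t + (u - |\tau| - d - 1)/2} e^{-|x|}$, dominated by $G_{s_2}(x) \sim c |x|^{(s_2 - d - 1)/2} e^{-|x|}$ as soon as $s_2 \geq u + 2(|\sigma| + t)$. Both $s_1$ and $s_2$ exceed $s + t$ under the standing hypothesis, and continuity of the ratio $|x|^t |J_s(x)| / (G_{s_1}(x) + G_{s_2}(x))$ on any bounded annulus then produces the desired global bound. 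The main obstacle is precisely this last part: careful bookkeeping via (p9) and the product rule is required to confirm that derivatives of $G_u$ preserve the infinity tail shape $|x|^{(u-d-1)/2} e^{-|x|}$ up to acceptable polynomial corrections, and to handle the borderline asymptotics of $K_v$ at the origin (logarithmic when $v = 0$, power-like otherwise). The flexibility of the sum $\sum G_{s_i}$ and the freedom to enlarge the $s_i$ make these choices elastic, so no deep analysis beyond explicit kernel manipulation is needed.
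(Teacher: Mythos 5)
Your proof is correct, and for parts (1) and (3) it follows essentially the paper's own route: inductively compute $D^\sigma G_t$ from (p5) and (p9), then read off the pointwise behavior near $0$ and at $\infty$ from (p7) and (p8). The real divergence is in part (2). You pass directly to Fourier space and estimate $\widehat{J_s}(\xi) = c_{\sigma,\tau}\,\partial_\xi^\sigma\bigl[\xi^\tau \langle\xi\rangle^{-u}\bigr]$ with the Leibniz rule and the standard symbol bound $|\partial_\xi^\gamma \langle\xi\rangle^{-u}| \lesssim \langle\xi\rangle^{-u-|\gamma|}$. The paper instead stays in physical space and proves, by induction on $|\sigma|+|\tau|$ using the auxiliary kernel $H_t$, the identity $x^\sigma D^\tau G_u = \sum_\upsilon c_\upsilon D^\upsilon G_{t_\upsilon}$ with $t_\upsilon - |\upsilon| \geq |\sigma|-|\tau|+u$, and only then invokes (p3). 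Your route is shorter and relies on a single routine symbol estimate; the paper's buys a physical-space decomposition that is reused elsewhere in the kernel calculus. Both are legitimate, with the small caveat that for small $u$ your Fourier computation is a priori distributional (though the resulting symbol $\partial_\xi^\sigma[\xi^\tau\langle\xi\rangle^{-u}]$ is a genuine continuous function, so this is harmless). One minor slip: your intermediate tail estimate $|x|^{|\sigma|+t+(u-|\tau|-d-1)/2}e^{-|x|}$ is slightly too optimistic — after $|\tau|$ applications of (p9) the dominant contribution at infinity is $|x|^{|\sigma|+t+(u-d-1)/2}e^{-|x|}$, with no extra $|x|^{-|\tau|/2}$ gain — but your subsequent choice $s_2 \geq u + 2(|\sigma|+t)$ happens to be exactly the threshold required by the correct exponent, so the conclusion stands.
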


\begin{proof}
Fix $s >0$ and let $J_s\in \mathcal{J}_s$.  Clearly by definition and (p2), $J_s \in C^{\infty}(\mathbb{R}^d_{\neq 0}: \mathbb{R})$.  To obtain the remainder of the lemma, let $v\geq 0, t >0$ and $|\sigma|=k$ be such that $v+t -k>0$.  We first show that there exists constants $C>0, s_i \geq v + t -k$ such that 
\begin{align}    
\label{est:ptwiseker}
|x|^{v} |D^{\sigma} G_t(x)| \leq C \sum_{i=1}^m G_{s_i}(x), \, \, \forall x\neq 0.
\end{align}
From this, we immediately deduce part (3) of the lemma and see that $J_s \in L^1(\mathbb{R}^d: \mathbb{R})$, thus also finishing the proof of part (1).  To show \eqref{est:ptwiseker}, inductively compute derivatives of $G_t$ using (p5) and (p9) to see that the following estimate holds  
\begin{eqnarray*}
|x|^{v} |D^{\sigma} G_{t}(x)| \leq C  \sum_{\substack{p,p'\in \mathbb{N} \cup \{ 0\}\\p+p'=k\\ p\leq p'}} |x|^{\frac{t-d}{2}+ v-p}|K_{\frac{d-t}{2}+p'}(|x|) |
\end{eqnarray*}
for all $x\in \mathbb{R}^d_{\neq 0}$, for some $C>0$.  Using this estimate, one can then deduce \eqref{est:ptwiseker} by applying the asymptotic formulas contained in (p7) and (p8) case by case.

We have left to verify part (2) of the lemma.  To see this, for $t \in \mathbb{R}$ define 
\begin{align*}
H_t (x) = \int_{0}^{\infty} e^{-\pi|x|^2/s} e^{-s/4\pi} s^{(-d+t)/2}s^{-1}\, ds, \qquad x\in \mathbb{R}^d_{\neq 0}.  
\end{align*}
We will first show by induction on $k\geq 1$ that if $|\sigma|+ |\tau|=k$, then 
\begin{align*}
x^{\sigma} D^\tau H_{t}(x) =\sum_{\upsilon} c_\upsilon D^\upsilon H_{t_\upsilon}(x), \qquad x\neq 0,
\end{align*}
where the sum is finite, $c_\upsilon \in \mathbb{R}$, $t_\upsilon -|\upsilon|\geq |\sigma| -|\tau| +t $.  Consider the case when $k=1$.  Notice that 
\begin{align*}
x^i H_t =- 2\pi \partial_i H_{t+2}.
\end{align*}  
Moreover, any $\partial_i H_{t}$ is already in the prescribed form.  Now suppose that the statement holds for each $1\leq k\leq k'$.  We show that the statement is also valid for $k=k'+1$.  Consider first $x^{\sigma} D^\tau H_t$ with $|\tau|\geq 1$.  Then it follows that
\begin{align*} 
  x^{\sigma} D^\tau H_t = D^\tau (x^{\sigma} H_t) + R  
\end{align*}
where $R$ is in the prescribed form by induction.  Also, we may use the inductive assumption to write 
$$x^\sigma H_t = \sum_{\upsilon} c_\upsilon D^\upsilon H_{t_\upsilon}$$
where $c_\upsilon \in \mathbb{R}$, $t_\upsilon - |\upsilon| \geq |\sigma| + t$.  This now finishes the case when $|\tau |\geq 1$.  If $|\tau|=0$ and $\sigma =(i_1, \ldots, i_k)$, then 
\begin{align*}
x^{i_1} \cdots x^{i_k} H_t = -\tfrac{1}{2\pi}x^{i_1} \cdots x^{i_{k-1}} \partial_{i_k} H_{t+2}.   
\end{align*}  
Now apply the same reasoning as in the case when $|\tau|\geq 1$ to finish the inductive argument.  

Part (2) of the lemma now follows by the inductive argument, standard Fourier analysis and (p3).  
\end{proof}

We conclude the section by proving two basic inequalities needed later.  Below, we use the standard trick of assuming smoothness of the coefficients in question and then keep careful track of how estimates depend on various H\"{o}lder norms of these coefficients.  Later, we will see how we can mollify, take limits, and then control various Sobolev norms that arise using the weighted inequality of Stein and Weiss \cite{SW58}.  Although the right-hand sides of these estimates below may appear puzzling, we must keep them until after mollification.

Recall that $\Omega\ni 0$ denotes an arbitrary bounded open subset of $\mathbb{R}^d$ and that the notation $g'$, $g''$, $g'''$, $g^{(4)}$, etc. means that the supremum of the norm in which it appears has been taken over all partial derivatives of order one, two, three, four,... respectively. 


\begin{lemma}
\label{lem:bas_rcalc}
Suppose that $F,G\in \mathcal{S}(\mathbb{R}^d: \mathbb{C})$.  Then there exists a constant $C_1 >0$ such that 
\begin{align}
\label{rc1}\| F\partial_k u\|_{(-1)} &\leq C_1(\|F\|_{\infty})\Big[ \|u\|+ \| F'u \|_{(-1)} \Big]
\end{align}
for all $u\in \mathcal{S} (\mathbb{R}^d: \mathbb{C})$.  Moreover, for every $\gamma \in (0,1)$ there exists $C_2>0$ such that 
\begin{align}
\label{rc2}\| [F\partial_k, \scb_{-1-\gamma}G\partial_j]u\| & \leq  C_2(|F|_{1}, |G|_{1}) \Big[\|u\|+ \|\,\, | F'' u|\, \, \|_{(-1)} +\|\,\, | G'' u|\, \, \|_{(-1)}\Big]
\end{align} 
for all $u\in \mathcal{S} (\mathbb{R}^d: \mathbb{C})$.   
\end{lemma}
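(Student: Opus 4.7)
\medskip

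\noindent\emph{Proof plan for Lemma \ref{lem:bas_rcalc}.}

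For \eqref{rc1}, the idea is simply to move the derivative off $u$ by the product rule and exploit the smoothing of order one from the $\|\cdot\|_{(-1)}$ norm.  Write $F\partial_k u = \partial_k(Fu) - (\partial_k F)u$ and apply the triangle inequality.  Since $\|\partial_k w\|_{(-1)}^2 = \int (2\pi\xi_k)^2 (1+4\pi^2|\xi|^2)^{-1}|\hat w(\xi)|^2 d\xi \leq \|w\|^2$, the first piece is controlled by $\|Fu\| \leq \|F\|_\infty \|u\|$, while the second piece is precisely $\|F'u\|_{(-1)}$ (up to the implicit supremum in the notation $F'$).

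For \eqref{rc2}, the plan is to split the commutator as
\begin{equation*}
[F\partial_k,\scb_{-1-\gamma}G\partial_j] \;=\; [F\partial_k,\scb_{-1-\gamma}]\,G\partial_j \;+\; \scb_{-1-\gamma}\,[F\partial_k,G\partial_j],
\end{equation*}
and to treat the two pieces separately.  The second piece is handled by Fourier-side considerations: since $\|\scb_{-1-\gamma}w\| = \|w\|_{(-1-\gamma)}\leq\|w\|_{(-1)}$ and $[F\partial_k,G\partial_j] = F(\partial_k G)\partial_j - G(\partial_j F)\partial_k$, apply \eqref{rc1} with new coefficients $F(\partial_k G)$ and $G(\partial_j F)$; the resulting derivative $(F(\partial_k G))' = F'(\partial_k G) + F(\partial_k G)'$ produces exactly terms of type $|F|_1|G|_1|u|$ (absorbable into $\|u\|$) plus terms of type $|F|_0\,|G''|\,|u|$, giving the $\||G''u|\|_{(-1)}$ contribution.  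By symmetry the second summand yields the $\||F''u|\|_{(-1)}$ contribution.

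The first piece is the heart of the argument and must be carried out in physical space.  Writing $\scb_{-1-\gamma}$ as convolution with $G_{1+\gamma}$ and integrating by parts the $\scb_{-1-\gamma}(F\partial_k \cdot)$ term in the variable $y$ produces the standard commutator kernel identity
\begin{equation*}
[F\partial_k,\scb_{-1-\gamma}]v(x) = \int (\partial_k G_{1+\gamma})(x-y)[F(x)-F(y)]v(y)\,dy + \int G_{1+\gamma}(x-y)(\partial_k F)(y)v(y)\,dy.
\end{equation*}
Substituting $v = G\partial_j u$ and performing one more integration by parts in $\partial_j$ to move the derivative off $u$, one obtains six terms: a principal commutator term with kernel $(\partial_j\partial_k G_{1+\gamma})(x-y)$ multiplied by $[F(x)-F(y)]G(y)$; three intermediate terms with kernel $(\partial_k G_{1+\gamma})$ or $(\partial_j G_{1+\gamma})$ and coefficients of order one in $F,G$ (one of which also carries the factor $[F(x)-F(y)]$); the term $-\int G_{1+\gamma}(x-y)(\partial_j\partial_k F)(y)G(y)u(y)\,dy$ which produces $F''$; and a last term with kernel $G_{1+\gamma}$ and coefficients $(\partial_k F)(\partial_j G)$.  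For the first four terms, the factor $|F(x)-F(y)|\leq |F|_1|x-y|$ (where present) combined with Lemma~\ref{lem:kernel_a}(3) dominates the kernel pointwise by a finite sum of $G_{s_i}(x-y)$ with $s_i \geq \gamma>0$; Minkowski's inequality and the trivial bound $\|\scb_{-s_i}|u|\|\leq\|u\|$ then give these terms a bound $C(|F|_1,|G|_1)\|u\|$.  The $F''$ term is estimated using $\|\scb_{-1-\gamma}w\|\leq\|w\|_{(-1)}$ to yield $|G|_0\,\||F''u|\|_{(-1)}$, and the last term is bounded using only first derivatives of $F,G$.

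The main technical obstacle is bookkeeping: one must keep careful track of all terms generated by the successive integrations by parts and verify that every remaining singular piece either pairs with an $[F(x)-F(y)]$ difference (absorbing one order of the Bessel-kernel singularity, so Lemma~\ref{lem:kernel_a}(3) applies) or else lands as a clean $\scb_{-1-\gamma}$ acting on a quantity involving $F''u$ or $G''u$.  Once the two-level IBP identity is written out carefully, the remaining estimates are routine applications of Minkowski and the pointwise kernel bound.
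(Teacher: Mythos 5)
Your argument is correct and rests on the same two pillars as the paper's: integration by parts in physical space to move derivatives off $u$, and pairing the difference $F(x)-F(y)$ with one order of the Bessel kernel singularity via Lemma~\ref{lem:kernel_a}.  The organizational differences are worth noting.  For \eqref{rc1}, the paper estimates $\scb_{-1-\gamma}F\partial_k u$ with $\gamma$-uniform constants and then sends $\gamma\downarrow 0$, whereas you work directly with $\scb_{-1}$ via the elementary bound $\|\partial_k w\|_{(-1)}\leq\|w\|$ --- more direct and avoids the limiting step.  For \eqref{rc2}, the paper expands the full commutator at once into eight convolution terms and groups them; you instead apply the Leibniz rule $[F\partial_k,\scb_{-1-\gamma}G\partial_j]=[F\partial_k,\scb_{-1-\gamma}]G\partial_j+\scb_{-1-\gamma}[F\partial_k,G\partial_j]$, dispatch the second summand by invoking the already-proven \eqref{rc1} with new coefficients $F\partial_k G$ and $G\partial_j F$, and reserve explicit kernel work for the first summand alone.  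This modular split is cleaner to verify; the residual six-term expansion of $[F\partial_k,\scb_{-1-\gamma}]G\partial_j u$ that you reach is exactly the content of the paper's grouping.  One technicality you should make explicit when invoking Lemma~\ref{lem:kernel_a} part (3) on the principal term: since $\gamma<1$, the kernel $\partial_j\partial_kG_{1+\gamma}$ itself lies in no $\mathcal{J}_s$ with $s>0$ (the index is $\gamma-1<0$), so first absorb the difference as $|F(x)-F(y)|\leq|F|_1\textstyle\sum_m|x_m-y_m|$ and note that each $(x_m-y_m)\,\partial_j\partial_kG_{1+\gamma}(x-y)\in\mathcal{J}_\gamma$ before applying part (3) with $t=0$ --- the same step the paper carries out implicitly.
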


\begin{proof}
We prove the inequalities in order.  Let $\delta >0$ be arbitrary.  Since $1+\gamma >1$, integration by parts gives 
\begin{eqnarray*}
\scb_{-1-\gamma}F\partial_k u = -(\partial_{k}(F) u * G_{1+\gamma}) + (F u * (\partial_{k}G_{1+\gamma})).  
\end{eqnarray*}
We see that by (p3) and the convolution theorem the first term above has $L^2$ norm bounded by $ \|\partial_k(F)u\|_{(-1)}$.  Similarly, the second term has $L^2$ norm bounded by $\|F\|_{\infty} \|u\|$.  Since the constants in the estimates are independent of $\gamma >0$, \eqref{rc1} now follows.

To obtain \eqref{rc2}, fix $\gamma >0$ and notice:
\begin{align*}
[F \partial_k, \scb_{-1-\gamma}G \partial_j]u&= F (\partial_k(G \partial_j u) * G_{1+\gamma}) - (G \partial_j(F \partial_k u ) * G_{1+\gamma})) \\&= F(G \partial_j \partial_k u *G_{1+\gamma}) - (F G \partial_j \partial_k u*G_{1+\gamma}) \\
& \qquad + F (\partial_k (G) \partial_j u*G_{1+\gamma}) - (G \partial_j(F) \partial_k u*G_{1+\gamma}).
\end{align*}
Using this expression, integrate by parts a few times to then see that 
\begin{align*}
[F \partial_k, \scb_{-1-\gamma}G \partial_j]u &=   F(G  \partial_k u *\partial_j G_{1+\gamma}) - (F G  \partial_k u*\partial_jG_{1+\gamma}) \\
&\qquad  -F(\partial_j(G)   u *\partial_k G_{1+\gamma}) + (\partial_j(F G)  u* \partial_k G_{1+\gamma}) \\
&\qquad  -(\partial_k\partial_j(F G)  u*  G_{1+\gamma})+ ( \partial_k(G \partial_j(F)) u* G_{1+\gamma}) \\
& \qquad + F (\partial_k (G)  u*\partial_j G_{1+\gamma}) - (G \partial_j(F) u* \partial_k G_{1+\gamma}).  
\end{align*} 
It is not hard to see that the last six terms above have $L^2$ norm bounded by 
\begin{align*}
C(|F|_{1}, |G|_{1})\big[ \|u\|+\|\,\, |F'' u|\, \,\|_{(-1)} + \|\,\, |G''u|\,\, \|_{(-1)} \big]
\end{align*}
for some constant $C>0$.  For the remaining two terms, first write
\begin{align*}
 &F(G  \partial_k u *\partial_j G_{1+\gamma}) - (F G  \partial_k u*\partial_jG_{1+\gamma}) \\
 &= \int_{\mathbb{R}^d} (F(x)-F(y))G(y) \partial_k u(y) \partial_jG_{1+\gamma}(x-y) \, dy.  
\end{align*}
Because of the difference $\Delta_F:=F(x)-F(y)$, we are permitted to use integration by parts once more to see that   
\begin{align*}
 &F(G  \partial_k u *\partial_j G_{1+\delta}) - (F G  \partial_k u*\partial_jG_{1+\gamma})=\\
&- \int_{\mathbb{R}^d}  \partial_k[(F(x)-F(y))G(y)] u(y) \partial_j G_{1+\gamma}(x-y) \, dy \\
&\qquad  + \int_{\mathbb{R}^d}  (F(x)-F(y))G(y) u(y)\partial_k\partial_j G_{1+\gamma}(x-y) \, dy.
\end{align*}
The first term above is easily seen to have the desired estimate.  For the second and last term, use the norm $|F|_{1}$ on the last term and Lemma \ref{lem:kernel_a} part (3) to obtain 
\begin{align*}
&\bigg|\int_{\mathbb{R}^d}  (F(x)-F(y))G(y) u(y)\partial_k\partial_j G_{1+\gamma}(x-y) \, dy \bigg|\\
 &\leq C \|G\|_{\infty} |F|_{1} \sum_{i=1}^j ( |u|* J_{s}) 
\end{align*}
for some $J_s\in \mathcal{J}_s$ with $s \geq \gamma>0$ and some constant $C>0$.  Standard Fourier analysis then finishes the result.    
\end{proof}


\section{Reducing subellipticity to the key estimates}
\label{sec:sub_boot}

In this section, we assume the two estimates claimed in the following lemma and use them to prove $\mathscr{L}$ is subelliptic in $\Omega$ for some $\delta>0$.  Such estimates arise naturally when attempting to bound commutators with $X$ as in the decomposition $X_0=X+ \Upsilon$ introduced in Section 2.            
\begin{lemma}\label{lem:tough_rcalc}
Fix $\alpha \in (1,2)$ and let $J\in \mathcal{J}_\alpha$.  Consider the operator $\mcm :\mathcal{S}(\mathbb{R}^d: \mathbb{C}) \rightarrow \mathcal{S}(\mathbb{R}^d: \mathbb{C})$ defined by  
\begin{align*}
\mcm u = a (b \partial_i \partial_j u *J)
\end{align*}
where $a, b \in C^{\infty}(\mathbb{R}^d: \mathbb{R})\cap B(\mathbb{R}^d: \mathbb{R})$.  Let $V=V^l \partial_l \in T\mathcal{S}(\mathbb{R}^d: \mathbb{R})$ and $\beta >0$ be such that $\alpha + \beta >2$. Then for all $Q\subset\Omega$ compact and any number $\gamma \in (2-\alpha,1)$, there exist constants $C_1, C_2>0$ such that the following estimates are valid for all $u\in C_0^{\infty}(Q: \mathbb{C})$:  
\begin{align*}
&\| [V, \mcm] u \|_{(-\beta)}\leq C_1(|V|_{1+ \gamma}) \Big[ \|u \| + \max_{l_1}\| \,\,  |(V^{l_1})'' u | \,\, \|_{(-\alpha -\beta +1)} \\& \qquad \qquad \qquad  \qquad \qquad \qquad \qquad \qquad + \max_{l_2}\| \,  \, |(V^{l_2})''' u | \,\, \|_{(-\alpha-\beta)}\Big]; \\
&\| [V,[V, \mcm]] u \|_{(-\beta)} \leq  C_2(|V|_{2+\gamma}) \Big[ \|u \| + \max_{l_1} \| \,\, | (V^{l_1})''' u| \,\, \|_{(-\alpha-\beta+1)} \\
\nonumber & \qquad \qquad \qquad \qquad  \qquad \qquad \qquad \qquad +\max_{l_2} \| \,\, | (V^{l_2})^{(4)} u| \,\, \|_{(-\alpha-\beta)}\Big].   
\end{align*}
\end{lemma}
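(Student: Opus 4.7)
The plan is to follow the same playbook as the proof of estimate \eqref{rc2} in Lemma \ref{lem:bas_rcalc}, now iterated once more. I would start by expanding $[V,\mcm]u$ via the product rule: with $V = V^l\partial_l$ and $\mcm u(x) = a(x)\int b(y)\,\partial_i\partial_j u(y)\,J(x-y)\,dy$, the truly dangerous part of $V\mcm u$ is $V^l(x)\,a(x)\,(b\,\partial_i\partial_j u * J)(x)$, which must be compared with $\mcm V u = a(x)\int b(y)\,\partial_i\partial_j(V^l\partial_l u)(y)\,J(x-y)\,dy$. The key cancellation is produced by inserting the identity $V^l(x) = V^l(y) + (V^l(x)-V^l(y))$ inside the convolution integral: the piece carrying $V^l(y)$ combines, after two integrations by parts in $y$, with $\mcm V u$, and by antisymmetry the top-order terms in which both $\partial_i$ and $\partial_j$ land back on $u$ cancel exactly.

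After this cancellation the remaining summands split into two kinds: terms where at least one derivative has already been moved onto $V^l$, $a$, or $b$ (giving factors like $(V^l)''$ or $(V^l)'''$ times a lower-order convolution of $u$), and ``off-diagonal'' terms carrying the factor $V^l(x)-V^l(y)$. For the off-diagonal ones I would do exactly what was done between the last two displayed equations in the proof of Lemma \ref{lem:bas_rcalc}: integrate by parts once more, using that a $\partial/\partial y$ acting on $V^l(x)-V^l(y)$ ignores the $x$-argument and so cannot put $V^l(x)$ back outside the kernel. Each such integration by parts either transfers a further derivative onto $V^l$ or leaves the difference in place, where it is controlled by $|V|_{1+\gamma}$ with the extracted $|x-y|$ factors absorbed into the kernel. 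In every surviving term the convolution is now against a kernel of the form $x^\sigma D^\tau J$, which by the closure property of $\{\mathcal{J}_s\}$ lies in some $\mathcal{J}_{t}$ with $t>0$; the hypotheses $\alpha+\beta>2$ and $\gamma>2-\alpha$ are precisely what is needed to keep $t$ large enough that Lemma \ref{lem:kernel_a}(2) gives boundedness from $L^2$ into $H^{-\beta}$, while Lemma \ref{lem:kernel_a}(3) lets one dominate the kernel pointwise by nonnegative Bessel potentials $G_{s_i}$, reproducing the negative Sobolev norms on the right-hand side of the claimed estimate.

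The iterated commutator $[V,[V,\mcm]]u$ is treated by feeding the expression produced above through exactly the same routine a second time. Each pass consumes one further derivative of $V^l$ via the decomposition $V^l(x)-V^l(y)$ and produces at most one further derivative on either $V^l$ or the kernel, which accounts for the upgrade of $|V|_{1+\gamma}$ to $|V|_{2+\gamma}$ and of the pair $((V^{l})'',(V^{l})''')$ to $((V^{l})''',(V^{l})^{(4)})$ in the right-hand side.

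The principal obstacle is bookkeeping rather than analysis: one has to expand the double commutator into its many summands, identify and cancel the top-order pair, and verify for each surviving term that after all integrations by parts the kernel still lies in some $\mathcal{J}_t$ with $t$ large enough for the target $H^{-\beta}$ estimate and that the $|x-y|$ powers extracted from the $V^l(x)-V^l(y)$ differences are compatible with the available H\"{o}lder regularity of $V$. Once this accounting is set up, the remaining estimates are direct Fourier/Young bounds for convolution against $\mathcal{J}_{t}$-kernels, of exactly the same type as those carried out in the proofs of Lemma \ref{lem:kernel_a} and Lemma \ref{lem:bas_rcalc}.
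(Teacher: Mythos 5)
Your high-level picture matches the paper's proof: insert a difference of $V^l$ to produce cancellation of the top-order terms between $V\mcm u$ and $\mcm Vu$, integrate by parts repeatedly so derivatives land on $V^l$, $a$, $b$, or the kernel, and close the estimates via the closure property of $\{\mathcal{J}_t\}$ and Lemma~\ref{lem:kernel_a}. The paper organizes the resulting terms into eight canonical forms $A_1,\dots,A_8$ and estimates each, but that part is bookkeeping and your account of it is fair.

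There is, however, a genuine gap in the middle step. You rely on the first-order difference $V^l(x)-V^l(y)$ and assert that, when it is "left in place," it "is controlled by $|V|_{1+\gamma}$ with the extracted $|x-y|$ factors absorbed into the kernel." That bound is false: for a $C^{1+\gamma}$ function the first-order difference still satisfies only $|V^l(x)-V^l(y)|\le |V|_1|x-y|$ (take $V^l$ linear to see one cannot do better than a single power of $|x-y|$). This single power is not enough. After the integrations by parts the worst surviving kernel is $\partial_l\partial_j\partial_i J \in \mathcal{J}_{\alpha-3}$, and multiplying by $|x|$ only brings you to $\mathcal{J}_{\alpha-2}$, which is non-integrable since $\alpha<2$. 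The paper escapes this by not stopping at the first-order difference: it subtracts off the linear part and works with the second-order Taylor remainder $\Delta_{V^l}^2$ (and, after the first integration by parts, the third-order remainder $\Delta_{V^l}^3$ appears). The second-order remainder enjoys $|\Delta_{V^l}^2|\lesssim |V|_{1+\gamma}|x-y|^{1+\gamma}$, so the kernel becomes $|x|^{1+\gamma}\partial_l\partial_j\partial_i J\in\mathcal{J}_{\alpha+\gamma-2}$, which is integrable precisely because $\gamma>2-\alpha$; the subtracted $\nabla V^l(y)\cdot(x-y)$ piece is handled separately because its $y$-derivative cancels with a matching term and it carries an explicit $x^m$ factor that tames one derivative of $J$. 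This is the essential device your proposal omits, and without it the "off-diagonal" terms you describe do not close. For the double commutator the same issue forces $\Delta^3$, which is why the final estimate sees up to fourth derivatives of $V^l$ and the norm $|V|_{2+\gamma}$.
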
 
The proof of this lemma constitutes all of Section \ref{sec:Mcommest} which is why it is deferred until then.  Nevertheless, it illustrates the power of the psuedo-differential calculus in physical space since it can give careful dependence on the smoothness of the coefficients of the vector field $V$.          
 
To give concrete bounds on the Sobolev norms as in the lemma above, we will employ the following weighted inequality:
\begin{corollary}
\label{wcor}
Suppose that the parameters $s,t >0$ satisfy $s\leq t<\frac{d}{2}$ and $Q\subset \Omega$ is compact.  Then there exists a constant $C>0$ such that 
\begin{align*}
\bigg\| \, \frac{u}{|x|^s} \, \bigg\|_{(-t)} \leq C \| u \|
\end{align*}
for all $u\in C_0^{\infty}(Q: \mathbb{C})$.  
\end{corollary}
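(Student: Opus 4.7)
My plan is to derive this from the classical Stein--Weiss weighted inequality \cite{SW58} together with the pointwise Bessel kernel bounds recorded in Section~\ref{sec:frac}. The first observation is that it suffices to prove the estimate in the case $s=t$: by property (p4), $\scb_{-t}=\scb_{-(t-s)}\scb_{-s}$, and since $\langle\xi\rangle^{-(t-s)}\leq 1$ when $t\geq s$, Plancherel implies that $\scb_{-(t-s)}$ is a contraction on $L^2(\mathbb{R}^d:\mathbb{C})$. Thus $\|u/|x|^s\|_{(-t)}\leq \|u/|x|^s\|_{(-s)}$, and the corollary reduces to the diagonal case.

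Next, I would pass from the Bessel potential $\scb_{-s}$ to the Riesz potential $I_s$ using the uniform bound $G_s(x)\leq C|x|^{s-d}$ on $\mathbb{R}^d\setminus\{0\}$. This follows directly from the asymptotics in Section~\ref{sec:frac}: property (p7) gives precisely the behavior $|x|^{s-d}$ near the origin (note that $s<d/2<d$, so this really is the leading term), while the exponential decay in (p8) dominates any polynomial rate at infinity. With this pointwise bound in hand,
\[\bigl|\scb_{-s}(u/|\cdot|^s)(x)\bigr|\;\leq\;C\int_{\mathbb{R}^d}\frac{|u(y)|}{|y|^s\,|x-y|^{d-s}}\,dy\;=\;C\,I_s\!\bigl(|\cdot|^{-s}|u|\bigr)(x).\]

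The final step is to invoke the Stein--Weiss inequality with $p=q=2$, index $\alpha=s$, outer weight exponent $\beta_1=0$, and inner weight exponent $\beta_2=s$. The admissibility hypotheses hold precisely because $0<s\leq t<d/2$: one has $0<s<d$; $\beta_1=0<d/2$ and $\beta_2=s<d/2$; the nonnegativity $\beta_1+\beta_2=s\geq 0$ is clear; and the scaling identity $\tfrac{1}{p}-\tfrac{1}{q}=\tfrac{\alpha-\beta_1-\beta_2}{d}$ collapses to the trivial $0=0$. Stein--Weiss then gives $\|I_s(|\cdot|^{-s}|u|)\|_{L^2}\leq C\|u\|_{L^2}$, which together with the preceding two reductions yields the corollary. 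There is no real obstacle here; the one subtlety is simply verifying the admissibility range for Stein--Weiss, which is exactly why the hypothesis requires $s\leq t<d/2$. The compactness of $Q$ does not actually enter the argument.
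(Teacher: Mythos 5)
Your proof is correct, and it shares the two ingredients of the paper's argument---the pointwise kernel bound $G_\cdot(x)\lesssim |x|^{\cdot-d}$ coming from (p5), (p7), (p8), followed by the Stein--Weiss inequality---but you handle the case $s<t$ differently, and your variant is cleaner. The paper applies Stein--Weiss directly with $\beta=t$, $\mu=0$, $\nu=2t$ to the function $|y|^{-s}|u(y)|$; the right side then comes out as $C\int |u(y)|^2 |y|^{2(t-s)}\,dy$, and the leftover weight $|y|^{2(t-s)}$ is absorbed into the constant precisely because $u$ is supported in the compact set $Q$ and $t\geq s$. This is why the paper's proof ends with the phrase ``since $u$ is compactly supported in $Q$.'' You instead peel off the factor $\scb_{-(t-s)}$ at the outset, note that $\langle\xi\rangle^{-(t-s)}\leq 1$ makes it an $L^2$ contraction, and then apply Stein--Weiss on the diagonal $\beta=s$, $\nu=2s$, so the weight on the right cancels identically and no compactness is needed. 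Your remark that compactness of $Q$ does not enter your argument is therefore accurate, but be aware that it does enter the paper's argument; the two proofs allocate the gap $t-s$ in different places. Your admissibility bookkeeping ($0<s<d$, $s<d/2$, the trivial scaling identity) is consistent with the hypotheses $2\beta-d<\nu<d$ of Theorem~\ref{whls} under the substitution $\beta=s$, $\nu=2s$, $\mu=0$, so everything checks out.
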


\begin{remark}
In particular for the right choice of parameters $s,t$, the action of the kernels can help ``erase" certain types of singularities.  
\end{remark} 

We will see in a moment that Corollary \ref{wcor} is a simple consequence of the behavior of the kernel $G_t$ near the origin and the following weighted Sobolev inequality \cite{SW58}.

\begin{theorem}[Stein-Weiss Inequality]\label{whls}
Let $0< \beta < d$, $\mu = \nu -2\beta$, $2\beta-d < \nu < d$ and suppose that $u |x|^{\nu/2} \in L^{2}(\mathbb{R}^{d},\, dx)$.  Then there exists a constant $C>0$ independent of $u$ such that 
\begin{equation}
\int_{\mathbb{R}^{d}} \bigg|\int_{\mathbb{R}^{d}}\frac{u(y)}{|x-y|^{d-\beta}}\, dy \, \bigg|^{2} |x|^{\mu}\, dx \leq C \int_{\mathbb{R}^{d}}|u(x)|^2 |x|^\nu \, dx.
\end{equation}    
\end{theorem}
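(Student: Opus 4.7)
My plan is to reduce the inequality to the $L^2$-boundedness of an integral operator with a kernel that is homogeneous of degree $-d$ on $\mathbb{R}^d \times \mathbb{R}^d$, and then apply Schur's test (Proposition \ref{stest}). First, setting $v(x) := |x|^{\nu/2} u(x)$, the hypothesis $u|x|^{\nu/2} \in L^2$ becomes $v \in L^2$, and the desired estimate is equivalent to the $L^2$-boundedness of
\begin{align*}
Tv(x) = |x|^{\mu/2} \int_{\mathbb{R}^d} \frac{|y|^{-\nu/2}}{|x-y|^{d-\beta}}\, v(y)\, dy,
\end{align*}
whose kernel $K(x,y) := |x|^{\mu/2} |y|^{-\nu/2} |x-y|^{-(d-\beta)}$ is nonnegative and measurable. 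The identity $\mu = \nu - 2\beta$ makes $K$ homogeneous of degree exactly $-d$, which singles out $\phi(x) = |x|^{-s}$ as the natural Schur weight.

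Applying Schur's test with that $\phi$, and using the dilation substitution $y = |x|z$ (respectively $x = |y|w$) together with the rotational invariance of $K$, the two Schur integrals collapse to
\begin{align*}
\int K(x,y) \phi(y)\, dy = I_1 \phi(x), \qquad \int K(x,y)\phi(x)\, dx = I_2 \phi(y),
\end{align*}
where
\begin{align*}
I_1 = \int_{\mathbb{R}^d} |z|^{-\nu/2 - s} |e-z|^{-(d-\beta)}\, dz, \qquad I_2 = \int_{\mathbb{R}^d} |w|^{\mu/2 - s} |f - w|^{-(d-\beta)}\, dw
\end{align*}
for any choice of unit vectors $e, f$. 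The balance condition $\mu = \nu - 2\beta$ is precisely what makes the powers of $|x|$ and $|y|$ on each right-hand side equal to $-s$, so that Schur's hypothesis is written in terms of honest constants $I_1, I_2$.

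The main step, on which the whole argument turns, is showing that $s$ can be chosen so that both $I_1$ and $I_2$ are finite. Checking convergence at $z = 0$, at $z = e$, and at $|z| \to \infty$ in $I_1$, and analogously for $I_2$, produces the conjunction
\begin{align*}
\max(\nu/2,\ \beta - \nu/2) < s < \min(d - \nu/2,\ d + \nu/2 - \beta).
\end{align*}
The hypothesis $\nu < d$ is precisely what makes $\nu/2 < d - \nu/2$, and $\nu > 2\beta - d$ is precisely what makes $\beta - \nu/2 < d + \nu/2 - \beta$; together with $0 < \beta < d$ these guarantee this open interval of admissible $s$ is nonempty. Fixing any such $s$, Proposition \ref{stest} delivers the $L^2$-boundedness of $T$, completing the proof. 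The only genuine obstacle is this bookkeeping for the four convergence inequalities; the overall structure of the argument is forced by the scaling and rotational symmetries of $K$.
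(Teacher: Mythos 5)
The paper states this theorem without proof, citing Stein--Weiss (1958), so there is no in-paper argument to compare against. Your proof is correct and is essentially the standard one: reduce to the unweighted $L^2$-boundedness of the homogeneous operator with kernel $K(x,y)=|x|^{\mu/2}|y|^{-\nu/2}|x-y|^{-(d-\beta)}$, observe that $\mu=\nu-2\beta$ forces $K$ to be homogeneous of degree $-d$, and apply Schur's test (Proposition~\ref{stest}) with the power weight $\phi(x)=|x|^{-s}$. Your dilation computation collapsing the Schur integrals to genuine constants is right, and the convergence analysis near $0$, near the unit sphere, and at infinity produces the window $\max(\nu/2,\,\beta-\nu/2)<s<\min(d-\nu/2,\,d+\nu/2-\beta)$, whose nonemptiness you correctly reduce to the four hypotheses $0<\beta<d$, $\nu<d$, and $\nu>2\beta-d$. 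The only point worth being a touch more explicit about is that since $K\ge 0$, Schur's test applied to $|v|$ also shows the inner integral $\int |u(y)|\,|x-y|^{-(d-\beta)}\,dy$ is finite for a.e.\ $x$, so the inner integral in the statement is well defined pointwise before one squares; this is implicit in what you wrote but worth a sentence.
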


\begin{proof}[Proof of Corollary \ref{wcor}]
Notice by smoothness of $G_t$ away from the origin and the formulas (p5), (p7), and (p8), there exists a constant $C>0$ independent of $u$ such that  
\begin{align*}
\| \, |x|^{-s} u \, \|_{(-t)}^2 &= \int_{\mathbb{R}^d} \bigg|\int_{\mathbb{R}^d} \frac{u(y)}{|y|^s} G_t(x-y) \, dy \bigg|^2\, dx\\
&\leq C \int_{\mathbb{R}^d}\bigg| \int_{\mathbb{R}^d}\frac{|u(y)| |y|^{-s} }{|x-y|^{d-t}} \, dy \bigg|^2\, dx.  
\end{align*}
Using the notation in the statement of Theorem \ref{whls}, pick $\beta=t$, $\mu =0$ and note that the corollary now follows since $u$ is compactly supported in $Q$.  
\end{proof}

So that we can apply the results above, throughout this section we mollify selected coefficients of $\mathscr{L}$.  Fixing $\epsilon >0$ and letting $\rho_\epsilon:\mathbb{R}^d\rightarrow [0, \infty)$ be a smooth mollifier, define
\begin{align*}
&X_0^\epsilon = (\rho_\epsilon * X_0^l) \partial_l, \qquad Y_j^\epsilon = (\rho_\epsilon * Y_j^l) \partial_l, \qquad \mathscr{L}^\epsilon = f + X_0^\epsilon + \textstyle{\sum}_{j=1}^r (Y_j^\epsilon)^* Y_j^\epsilon.  
\end{align*}
Notice that we did not change $f$ in $\scl^\epsilon$.  This is because it does not play a major role in the arguments here.  

If $u \in C_0^{\infty}(\mathbb{R}^d: \mathbb{C})$, observe that \textbf{(A1)} gives the following convergences in the $L^2$ sense as $\epsilon \downarrow 0$: 
\begin{align*}
&X_0^\epsilon u \rightarrow X_0 u, \qquad Y_j^\epsilon u \rightarrow Y_j u, \qquad 
\mathscr{L}^\epsilon u \rightarrow \mathscr{L}u.  
\end{align*}       
Moreover, note that mollification remains ``well-behaved" in the Lipschitz norm $| \cdot |_{s}$; that is, if $F \in C_0^{s}(\mathbb{R}^d: \mathbb{R})$ where $s\geq 0$, then $F^\epsilon:= (\rho_\epsilon * F)$ satisfies 
\begin{align*}
\sup_{\epsilon \in (0,1)}|F^\epsilon|_{s}&= \sup_{\epsilon \in (0,1)} \bigg(\|F^\epsilon\|_{\infty}+ \sup_{|\sigma|=\lfloor s \rfloor}\sup_{x\neq y} \frac{|D^\sigma F^\epsilon(x)-D^\sigma F^\epsilon (y)|}{|x-y|^{s-\lfloor s \rfloor}}\bigg)\\
\\
&\leq \| F\|_{\infty}+  \int_{\mathbb{R}^d} \rho_\epsilon(z) \sup_{|\sigma|=\lfloor s \rfloor}\sup_{x\neq y}\frac{|D^\sigma F(x-z)-D^\sigma F(y-z)|}{|x-z -(y-z)|^{s-\lfloor s\rfloor}} \, dz\bigg)\\
&\leq |F|_{s}.   
\end{align*}

Since we have already seen that $\mathscr{L}$ locally smooths along the directions contained in $\mathcal{F}_0$, we now show the same is true for the direction determined by the vector field $X$ where $X\in TC_0^\infty(\mathbb{R}^d: \mathbb{R})$ is as in the decomposition $X_0=X+ \Upsilon$, $X\in TC_0^\infty(\mathbb{R}^d: \mathbb{R})$, $\Upsilon \in \mathcal{F}_{0}^B$.

\begin{lemma}\label{driftlem} Suppose that $X_0= X + \Upsilon$ for some $X\in TC_0^\infty(\mathbb{R}^d: \mathbb{R})$, $\Upsilon \in \mathcal{F}_0^B$, and let $Q\subset \Omega$ be compact.  If \emph{\textbf{(A2)}} is satisfied, then for each $\gamma >0$ there exists a constant $C>0$ such that
\begin{equation}\label{driftest}
\|   X u \|_{(-\frac{1}{2}-\gamma)}+ \|   X_0 u \|_{(-\frac{1}{2}-\gamma)} \leq C( \| \scl u \| + \|u\|) 
\end{equation}
for all $u\in C_{0}^{\infty}(Q:\mathbb{C})$.  
\end{lemma}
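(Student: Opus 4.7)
The plan is to establish the $\|X_0 u\|_{(-1/2-\gamma)}$ bound first and then deduce the $\|Xu\|_{(-1/2-\gamma)}$ bound from the decomposition $X = X_0 - \Upsilon$ using that $\Upsilon \in \mathcal{F}_0^B$. Since the later steps involve commuting variable-coefficient vector fields past Bessel operators, I would first mollify: work with $\scl^\epsilon$, $X_0^\epsilon$, $Y_j^\epsilon$, derive the estimate with constants depending only on H\"older norms of the original coefficients (which dominate those of the mollifications uniformly in $\epsilon$, as remarked earlier in this section), and then pass to the limit $\epsilon \downarrow 0$ using the $L^2$ convergences $\scl^\epsilon u \to \scl u$ and $X_0^\epsilon u \to X_0 u$.

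The key identity is the duality pairing
\[
\|X_0^\epsilon u\|_{(-1/2-\gamma)}^2 = (X_0^\epsilon u, \scb_{-1-2\gamma} X_0^\epsilon u),
\]
into which I substitute $X_0^\epsilon u = \scl^\epsilon u - fu - \sum_j (Y_j^\epsilon)^* Y_j^\epsilon u$ in the second slot. The two terms involving $\scl^\epsilon u$ and $fu$ are straightforward: by writing $\scb_{-1-2\gamma}=\scb_{-(1/2+\gamma)}\scb_{-(1/2+\gamma)}$ (property (p4)) and applying Cauchy--Schwarz plus Young's inequality, each contributes at most $C(\|\scl^\epsilon u\|^2 + \|u\|^2) + \tfrac{1}{4}\|X_0^\epsilon u\|_{(-1/2-\gamma)}^2$, the last piece being absorbable into the left side.

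The main work is the sum of $((Y_j^\epsilon)^* Y_j^\epsilon u, \scb_{-1-2\gamma} X_0^\epsilon u) = (Y_j^\epsilon u, Y_j^\epsilon \scb_{-1-2\gamma} X_0^\epsilon u)$. By Cauchy--Schwarz the factor $\|Y_j^\epsilon u\|$ is immediately controlled by Proposition \ref{apest}, reducing the task to bounding $\|Y_j^\epsilon \scb_{-1-2\gamma} X_0^\epsilon u\|$ by $C(\|\scl^\epsilon u\| + \|u\|)$. I would expand
\[
Y_j^\epsilon \scb_{-1-2\gamma} X_0^\epsilon = \scb_{-1-2\gamma} X_0^\epsilon Y_j^\epsilon + \scb_{-1-2\gamma}[Y_j^\epsilon, X_0^\epsilon] + [Y_j^\epsilon, \scb_{-1-2\gamma}] X_0^\epsilon
\]
and handle each piece separately: the first by \eqref{rc1} of Lemma \ref{lem:bas_rcalc} applied to $X_0^\epsilon$ acting on $Y_j^\epsilon u$ followed by Proposition \ref{apest}; the second as a first-order operator with bounded coefficients acting on $u$, treated via duality and integration-by-parts against $H^{1+2\gamma}$ test functions; and the third via the commutator/kernel estimates from Section \ref{sec:frac}, each contributing constants depending only on the H\"older norms furnished by \textbf{(A2)}. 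Absorbing then gives $\|X_0^\epsilon u\|_{(-1/2-\gamma)} \leq C(\|\scl^\epsilon u\| + \|u\|)$, and passing $\epsilon \downarrow 0$ closes the estimate for $X_0$. For the $X$ bound, since $X = X_0 - \Upsilon$ with $\Upsilon \in \mathcal{F}_0^B$, the triangle inequality together with $\|\Upsilon u\|_{(-1/2-\gamma)} \leq \|\Upsilon u\|$ and the subunit comparison $\|\Upsilon u\|^2 \leq C\sum_j \|Y_j u\|^2$ combined with Proposition \ref{apest} completes the argument.

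The main obstacle I anticipate is the third commutator term $\|[Y_j^\epsilon, \scb_{-1-2\gamma}] X_0^\epsilon u\|$: while smooth-symbol calculus (Lemma \ref{pscalc}) yields the correct order count, its constants involve Schwartz seminorms of the mollified coefficients that blow up as $\epsilon \downarrow 0$. The resolution requires invoking the physical-space kernel estimates of Section \ref{sec:frac} (the same integration-by-parts strategy as in Lemma \ref{lem:bas_rcalc}), whose constants depend only on H\"older norms of the form $|Y_j|_{1+\gamma}$---precisely the regularity supplied by the $C^{5/2}$ hypothesis on the $Y_j$ in \textbf{(A2)}.
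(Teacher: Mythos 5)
Your overall architecture matches the paper's: pair $X_0^\epsilon u$ with $\scb_{-1-2\gamma}X_0^\epsilon u$, substitute $X_0^\epsilon u = \scl^\epsilon u - fu - \sum_j (Y_j^\epsilon)^*Y_j^\epsilon u$, reduce to bounding $\|Y_j^\epsilon\scb_{-1-2\gamma}X_0^\epsilon u\|$ via commutator estimates, pass $\epsilon\downarrow 0$, and deduce the $X$ bound from $X=X_0-\Upsilon$ and the subunit comparison via Proposition~\ref{apest}. Your three-term expansion $Y_j^\epsilon\scb X_0^\epsilon = \scb X_0^\epsilon Y_j^\epsilon + \scb[Y_j^\epsilon,X_0^\epsilon] + [Y_j^\epsilon,\scb]X_0^\epsilon$ is just a further split of the paper's two-term decomposition $Y_j^\epsilon\scb X_0^\epsilon = \scb X_0^\epsilon Y_j^\epsilon + [Y_j^\epsilon,\scb X_0^\epsilon]$, the latter bounded in one shot by~\eqref{rc2}; this difference is cosmetic. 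You also correctly diagnose that Fourier-space symbol calculus (Lemma~\ref{pscalc}) would give constants blowing up as $\epsilon\downarrow 0$, so physical-space kernel estimates are needed.

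There is, however, a genuine gap in how you characterize what those physical-space estimates produce. You assert repeatedly that the commutator bounds deliver constants depending only on H\"older norms such as $|Y_j|_{1+\gamma}$ or $|X_0|_1$, and therefore uniform in $\epsilon$. That is not what \eqref{rc1} and \eqref{rc2} say. For instance, \eqref{rc2} reads
\begin{align*}
\| [F\partial_k, \scb_{-1-\gamma}G\partial_j]u\| \leq  C(|F|_{1}, |G|_{1}) \Big[\|u\|+ \|\,| F'' u|\, \|_{(-1)} +\|\,| G'' u|\, \|_{(-1)}\Big],
\end{align*}
and the terms $\||F''u|\|_{(-1)}$, $\||G''u|\|_{(-1)}$ are \emph{not} bounded by $\|u\|$ times a H\"older norm. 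Under \textbf{(A2)}, $X_0\in C^{3/2}$ and $Y_j\in C^{5/2}$, so $(X_0^\epsilon)''$ and $(Y_j^\epsilon)'''$ are unbounded as $\epsilon\downarrow 0$: they behave like singular weights at the origin, controlled only through the hypotheses $\|(X_0^l)''w_{\gamma_1}\|_\infty<\infty$, etc. Concretely, your second piece $\scb_{-1-2\gamma}[Y_j^\epsilon,X_0^\epsilon]u$ is \emph{not} ``a first-order operator with bounded coefficients'' in the sense you need: after one integration by parts (or one application of \eqref{rc1}), you inherit exactly these second-derivative weighted terms. Your proposal therefore cannot close without invoking the Stein--Weiss weighted Sobolev inequality (Corollary~\ref{wcor}) together with the \textbf{(A2)} weight conditions to control $\||(X_0^l)''u|\|_{(-1)}$ and $\||(Y_j^m)''u|\|_{(-1)}$ \emph{after} the limit $\epsilon\downarrow 0$. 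This step is the raison d'\^etre of the weight hypotheses in \textbf{(A2)} and is the part of the argument your plan is missing.
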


\begin{proof}
Let $\epsilon, \gamma >0$ be arbitrary and write 
\begin{eqnarray*}
\| X_0^\epsilon u \|_{(-1/2-\gamma/2)}^2&=&\|\scb_{-1/2-\gamma/2}( X_0^\epsilon u)\|^{2}\\ &=& \big(\scb_{-1/2-\gamma/2} (X_0^\epsilon u), \scb_{-1/2-\gamma/2}(X_0^\epsilon u) \big)\\
&=& (X_0^\epsilon u,\scb_{-1-\gamma}(X_0^\epsilon u))\\
&=& \big((\mathscr{L}^\epsilon- \textstyle{\sum_{j=1}^{r}} (Y_j^\epsilon)^{*}Y_{j}^\epsilon - f)u, \scb_{-1-\gamma}(X_0^\epsilon u)\big)\\
&\leq & \| \scl^\epsilon u \| \| \scb_{-1-\gamma}(X_0^\epsilon u)\| + \|f \|_{\infty} \| u \| \|\scb_{-1-\gamma}(X_0^\epsilon u)\|\\
&&\qquad + {\textstyle\sum_{j=1}^{r}}  \| Y_{j}^\epsilon u \|  \| Y_{j}\scb_{-1-\gamma}(X_0^\epsilon u)  \|        
\end{eqnarray*}
First realize by \eqref{rc1} and the preceding remarks, we have 
\begin{align*}
\|\scb_{-1-\gamma}(X_0^\epsilon u) \| \leq \|  X_0^\epsilon u \|_{(-1)}\leq  C(|X_0^\epsilon|_{1}) \|u\| \leq C(|X_0|_1) \|u\|
\end{align*}
for some constant $C>0$ independent of $\epsilon, \gamma$.  
Notice also that $Y_j^\epsilon \scb_{-1-\gamma}X_0^\epsilon u = [Y_j^\epsilon, \scb_{-1-\gamma}X_0^\epsilon]u + \scb_{-1-\gamma} (X_0^\epsilon Y_j^\epsilon u)$.  Thus, by the above,   
\begin{align*}
\| \scb_{-1-\gamma}X_0^\epsilon Y_j^\epsilon u \| &\leq C(|X_0|_{1}) \|Y_j^\epsilon u\|
\end{align*}  
where the constant is again independent of $\epsilon, \gamma$.  
Using this and applying \eqref{rc2}, we obtain 
\begin{align*}
\|Y_j^\epsilon \scb_{-1-\gamma}X_0^\epsilon u\| & \leq  \|[Y_j^\epsilon, \scb_{-1-\gamma}X_0^\epsilon]u\| + \| \scb_{-1-\gamma}(X_0^\epsilon Y_j^\epsilon u) \|\\
&\leq  C_1(|X_0|_{1}, |Y_j|_{1})[ \| Y_j^\epsilon u\| + \|u\|]\\
&+ C_2(|X_0|_{1}, |Y_j|_{1})\textstyle{\max}_{l,m } \Big[\|\,\,|(\rho_\epsilon * X_0^l)'' u|\,\, \|_{(-1)}\\
&\qquad \qquad +\| \,\,|(\rho_\epsilon * Y_j^m)''u|\,\,\|_{(-1)}\Big]   
\end{align*}
where $C_2$ may depend on $\gamma$ but not on $\epsilon$.  
Putting all estimates together, we find that  
\begin{align*}
\| X_0^\epsilon u \|^2_{(-1/2-\gamma/2)}&\leq D_1(|X_0|_{1}, \max_j|Y_j|_{1}) \big( \|\scl^\epsilon u \|^2 + \textstyle{\sum}_{j}\|Y_j^\epsilon u \|^2 + \| u \|^2 \big) \\
& + D_2(|X_0|_{1}, \max_j|Y_j|_{1})\max_{l,m,j}\big(
\| \,\, |(\rho_\epsilon * X_0^l)''u|\, \,\|_{(-1)}\\
&\qquad \qquad \qquad \qquad \qquad \qquad +\| \,\, |(\rho_\epsilon *Y_j^l)''u|\,\,\|_{(-1)}\big).
\end{align*}
To take the limit as $\epsilon \downarrow 0$, first note that by \textbf{(A2)}
\begin{align*}
(\rho_\epsilon * X_0^l)''= (\rho_\epsilon * (X_0^l)'')\text{ and } (\rho_\epsilon * Y_j^m)''= (\rho_\epsilon * (Y_j^m)'').
\end{align*} 
In particular, we have that  
\begin{align*}
\| X_0^\epsilon u \|^2_{(-1/2-\gamma/2)}&\leq D_1(|X_0|_{1}, \max_j|Y_j|_{1})\textstyle{\sum}_{j} \big( \|\scl^\epsilon u \|^2 + \|Y_j^\epsilon u \|^2 + \| u \|^2 \big) \\
& + D_2(|X_0|_{1}, \max_j|Y_j|_{1})\max_{l,m,j}\big(
\| \,\,(\rho_\epsilon * |(X_0^l)''|)|u|\,\,\|_{(-1)}\\
&\qquad \qquad \qquad \qquad \qquad \qquad +\| \,\, (\rho_\epsilon *|(Y_j^l)''|) |u|\,\, \|_{(-1)}\big).
\end{align*}
Since the constants in the inequality above are independent of $\epsilon$, taking $\epsilon \downarrow 0$ and then applying Proposition \ref{apest} we find that  
\begin{align*}
\|X_0 u \|_{(-1/2-\gamma/2)} \leq C\Big( \|\mathscr{L}u \| + \|u \| + \max_{l,m}(\| \,\, | (X_0^l)'' u|\,\, \|_{(-1)}+\textstyle{\sum_j}\| \,\, |(Y_j^m)''u|\,\, \|_{(-1)})\Big)
\end{align*}
for some constant $C>0$ independent of $u$.  Applying Corollary \ref{wcor} using the weighted inequalities in \textbf{(A2)} finishes the estimate for $X_0$.  The bound for $X$ now easily follows as well.    
\end{proof}

Proposition \ref{apest} and Lemma \ref{driftlem} is now used as the basis of an inductive argument to estimate commutators of vector fields in $ \{ X\}\cup \mathcal{F}_0$.  Setting up the statement of the result, let $\mathcal{F}_{1}$ be the set of vector fields $U$ such that either $U=[V,W]$ with $V,W\in \mathcal{F}_{0}$ or $U=X$.  For $k\geq 2$, $\mathcal{F}_{k}$ denotes the set of vector fields $U$ such that either $U=[V,W]$, $V\in \mathcal{F}_{0}$, $W\in \mathcal{F}_{k-1}$ or $U=[V,W]$, $V\in \mathcal{F}_{1}$, $W\in \mathcal{F}_{k-2}$.

\begin{theorem}\label{bracketlem}
Let $Q\subset \Omega$ be compact and $\gamma \in (0,1)$ be arbitrary, and suppose that \emph{\textbf{(A2)}} is satisfied.  If $V_{k}\in\mathcal{F}_{k}$ and $\epsilon \leq 2^{-k}$, then there exists a constant $C>0$ such that 
\begin{equation}
\| V_{k} u\|_{(\epsilon-1-\gamma)} \leq C( \| \scl u \| + \|u \| )
\end{equation} 
for all $u \in C_0^{\infty}(Q: \mathbb{C})$.  
\end{theorem}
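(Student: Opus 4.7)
By induction on $k \ge 0$. Since $\|V_k u\|_{(\epsilon - 1 - \gamma)}$ is monotone in $\epsilon$, it suffices to treat the boundary case $\epsilon = 2^{-k}$; set $\beta_k := 1 + \gamma - 2^{-k}$ and aim to prove $\|V_k u\|_{(-\beta_k)} \le C(\|\scl u\| + \|u\|)$. The base case $k = 0$ is immediate from Proposition \ref{apest}, since $\|V_0 u\|_{(-\gamma)} \le \|V_0 u\|$. For $k = 1$, the case $V_1 = X$ is exactly Lemma \ref{driftlem}, while for $V_1 = [V, W]$ with $V, W \in \mathcal{F}_0$ one runs the standard bracket argument: start from $\|V_1 u\|_{(-\beta_1)}^2 = (V_1 u, \scb_{-2\beta_1} V_1 u)$, expand $V_1 u = V(Wu) - W(Vu)$, integrate by parts, split $\scb_{-2\beta_1}$ appropriately between the two factors, and apply Lemma \ref{pscalc} together with Cauchy--Schwarz so that $\|Vu\|$ and $\|Wu\|$ (controlled by Proposition \ref{apest}) appear as the inductive factors, with any residual $V_1 u$-term at the critical Sobolev index absorbed via Young's inequality.

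For the inductive step, suppose the estimate holds for all $j < k$ and let $V_k = [V, W] \in \mathcal{F}_k$ be either (A) $V \in \mathcal{F}_0$, $W \in \mathcal{F}_{k-1}$, or (B) $V \in \mathcal{F}_1$, $W \in \mathcal{F}_{k-2}$. In both cases $V$ is smooth (either by definition of $\mathcal{F}_0$, or because $V \in \mathcal{F}_1$ either equals the smooth $X$ or is a bracket of smooth fields). Run the analogous bracket computation in $\|\cdot\|_{(-\beta_k)}$: expand $V_k u = V(Wu) - W(Vu)$, integrate by parts, and split $\scb_{-2\beta_k} = \scb_{-\sigma_1}\scb_{-\sigma_2}$ with $\sigma_1, \sigma_2$ chosen so that pairing $Wu$ with $\scb_{-\sigma_1}$ lands in the inductive norm $\|Wu\|_{(-\beta_{k-1})}$ (Case A) or $\|Wu\|_{(-\beta_{k-2})}$ (Case B). The companion factor, pairing $V^*$ with $\scb_{-\sigma_2}$ acting on $V_k u$, is handled via Lemma \ref{pscalc} for the smooth pieces and via Lemma \ref{lem:tough_rcalc}'s single-commutator estimate when $V$ must be commuted past the rough-coefficient pieces of $\scl$ (Case A); Case B invokes Lemma \ref{lem:tough_rcalc}'s double-commutator estimate, reflecting the quartering $\beta_k - \beta_{k-2} = 3 \cdot 2^{-k}$ versus the halving $\beta_k - \beta_{k-1} = 2^{-k}$. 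The residual $\|V_k u\|_{(-\beta_k)}$-term is absorbed by Young, and the $Vu$-piece is dispatched using Proposition \ref{apest} (Case A) or Lemma \ref{driftlem} (Case B).

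The main obstacle is coordinating the Sobolev bookkeeping with the regularity of $V$'s coefficients: the splittings of $\scb_{-2\beta_k}$ and the Cauchy--Schwarz pairings must be chosen so that (i) the inductive hypothesis handles one factor, (ii) the other factor lands in the scope of either Lemma \ref{pscalc} (smooth case) or Lemma \ref{lem:tough_rcalc} (rough case), and (iii) the residual term has exactly the Sobolev index $-\beta_k$ so that Young's inequality closes the argument. The regularity hypotheses in \textbf{(A2)} --- $C^{3/2}$ on $X_0$, $C^{5/2}$ on $Y_j$, plus the weighted $L^{\infty}$-bounds on second and higher derivatives --- are precisely those needed to supply the smoothness demanded by Lemma \ref{lem:tough_rcalc} in its single- and double-commutator versions; the mollification argument introduced in the proof of Lemma \ref{driftlem}, combined with Corollary \ref{wcor} for the weighted-norm limits, promotes the smooth-coefficient formal computations to estimates valid under \textbf{(A2)}.
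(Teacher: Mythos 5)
Your high-level skeleton matches the paper's: induction on $k$ with base case from Proposition \ref{apest}, the $X$ case from Lemma \ref{driftlem}, and a bracket expansion of $\|V_k u\|^2_{(-\beta_k)}$ into inner products paired with an operator of the form $A = \scb_{-2\beta_k}V_k$, closed by the inductive hypothesis and absorption. The case split (A) $V\in\mathcal F_0,\,W\in\mathcal F_{k-1}$ versus (B) $V\in\mathcal F_1,\,W\in\mathcal F_{k-2}$ is also the right one.

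However, there is a material confusion in how Lemma \ref{lem:tough_rcalc} enters, which is exactly the point the theorem is designed around. In the paper's argument, Case A never touches Lemma \ref{lem:tough_rcalc} at all: when the $\mathcal F_0$ factor is smooth and the Bessel kernel is smooth, everything is dispatched by induction, Lemma \ref{pscalc} (items \eqref{pscalc2} and \eqref{pscalc4}), and Proposition \ref{apest}. Case B is the whole ballgame, and your plan omits the decisive maneuver there. After the Jacobi identity reduces Case B to $W=X$, the only way to handle the paired terms $(Vu, AXu)-(Xu, AVu)$ under the weak regularity $X_0\in TC_0^{3/2}$ is to \emph{substitute} $X = \scl - \Upsilon - f - \sum_{j}Y_j^*Y_j$, so that the $X$-pairing is traded for $\scl$, $f$, $\Upsilon$ (all benign) and $\sum_j Y_j^*Y_j$. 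It is the $Y_j^*Y_j$ piece — after mollifying the coefficients and applying Proposition \ref{apest} to $\|Y_j^{\epsilon^*}Mu\|$, then writing $\scl^{\epsilon^*}-f = -\sum(Y_j^{\epsilon^*})^2+\hat X_0^{\epsilon^*}$ and expanding $[\scl^{\epsilon^*}-f, M]$ — that produces the commutators $[Y_j^{\epsilon^*},M]Y_j^{\epsilon^*}$ and $[Y_j^{\epsilon^*},[Y_j^{\epsilon^*},M]]$ requiring \emph{both} the single- and double-commutator estimates of Lemma \ref{lem:tough_rcalc}, followed by Corollary \ref{wcor} to clean up the weighted terms from \textbf{(A2)}. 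Your assignment of the single-commutator bound to Case A and the double-commutator bound to Case B (justified by a "halving versus quartering" heuristic in the Sobolev indices) is not how the two estimates are used; both appear within Case B, and for reasons tied to the second-order Taylor structure of $Y_j^*Y_j$, not to the cascading of $\beta_k$. As written, your proposal would get stuck at $(Vu, AXu)-(Xu, AVu)$, because there is no route to a bound there without invoking the equation $X=X_0-\Upsilon$ and $X_0 = \scl - f - \sum Y_j^*Y_j$.
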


As discussed previously in Remark \ref{rem:8}, an immediate consequence of the proof of Theorem \ref{bracketlem} is the following:
\begin{corollary}
If \textbf{\emph{(A1)}} is satisfied, then for each $V\in \text{\emph{Lie}}(\mathcal{F}_0)$ and $Q\subset \Omega$ compact there exist constants $C,\epsilon >0$ such that 
\begin{align*}
\| V u \|_{(\epsilon-1)}\leq C( \|\scl u \| + \|u \|)
\end{align*} 
for all $u \in C_0^{\infty}(Q: \mathbb{C})$.  
\end{corollary}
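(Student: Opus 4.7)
The plan is to run the same induction on commutator depth that underlies Theorem \ref{bracketlem}, but in the vastly simpler setting afforded by \textbf{(A1)}. Two simplifications are crucial: the corollary asks only about $\text{Lie}(\mathcal{F}_0)$, so the non-smooth drift step (which is what forces Lemma \ref{driftlem} and the stronger assumption \textbf{(A2)} in Theorem \ref{bracketlem}) is unnecessary; and since every element of $\mathcal{F}_0$ is in $TC_0^\infty(\mathbb{R}^d:\mathbb{R})$, all iterated brackets are themselves smooth vector fields, so the entire analysis can be carried out with the smooth pseudo-differential calculus of Lemma \ref{pscalc} and never needs Lemma \ref{lem:bas_rcalc} or Lemma \ref{lem:tough_rcalc}.

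Concretely, I would define $\tilde{\mathcal{F}}_0 = \mathcal{F}_0$ and, recursively, $\tilde{\mathcal{F}}_k$ as the set of brackets $[W, V_{k-1}]$ with $W \in \mathcal{F}_0$ and $V_{k-1} \in \tilde{\mathcal{F}}_{k-1}$. Any $V \in \text{Lie}(\mathcal{F}_0)$ is a finite linear combination of elements of $\bigcup_{k \geq 0} \tilde{\mathcal{F}}_k$, so it suffices to prove by induction on $k$ the existence of $\epsilon_k > 0$ and $C_k > 0$ for which $\|V u\|_{(\epsilon_k - 1)} \leq C_k(\|\scl u\| + \|u\|)$ on $C_0^\infty(Q:\mathbb{C})$, for every $V \in \tilde{\mathcal{F}}_k$. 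The base case $k=0$ follows from Proposition \ref{apest}: Cauchy--Schwarz applied to $\text{Re}(\scl u,u)$ yields $\|V u\| \leq C(\|\scl u\| + \|u\|)$, giving $\epsilon_0 = 1$.

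For the inductive step, write $V = [W, V_{k-1}]$ with $W \in \mathcal{F}_0$, $V_{k-1} \in \tilde{\mathcal{F}}_{k-1}$, and square the desired norm using Bessel smoothing,
\[
\|V u\|_{(\epsilon_k - 1)}^2 = (W V_{k-1} u - V_{k-1} W u,\ \scb_{2\epsilon_k - 2} V u),
\]
then redistribute the $W$ and $V_{k-1}$ factors by alternately integrating by parts (using that $W^* = -W + \text{bounded}$ since $W \in TC_0^\infty$) and commuting operators across $\scb_{2\epsilon_k - 2}$ via estimate \eqref{pscalc3}. Each resulting summand pairs either $W u$ against a Bessel-smoothed version of $V_{k-1} u$ or $V_{k-1} u$ against a Bessel-smoothed version of $W u$, at strictly lower Sobolev levels than the left-hand side. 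The base case controls $\|Wu\|$ by $\|\scl u\| + \|u\|$, the inductive hypothesis controls $\|V_{k-1} u\|_{(\epsilon_{k-1} - 1)}$ by $\|\scl u\| + \|u\|$, and choosing $\epsilon_k$ sufficiently small (for instance $\epsilon_k = \epsilon_{k-1}/4$) balances the exponents so that any residual copies of $\|V u\|_{(\epsilon_k - 1)}$ on the right-hand side can be absorbed via Young's inequality, closing the induction.

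The main obstacle I expect is the standard Kohn-style bookkeeping: each passage of a smooth vector field across a Bessel operator produces a commutator remainder, and one has to verify at every stage that these remainders live at a strictly lower Sobolev level where either the inductive hypothesis or Proposition \ref{apest} already applies. This is precisely what Lemma \ref{pscalc} was designed to facilitate, so the task reduces to iterated application of \eqref{pscalc3} and \eqref{pscalc4} together with Peetre's inequality. Crucially, no weighted Sobolev estimates or rough-coefficient kernel calculus are required, which is exactly why \textbf{(A1)} alone suffices.
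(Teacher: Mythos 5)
Your proposal is correct and is essentially what the paper intends by calling the corollary ``an immediate consequence of the proof of Theorem \ref{bracketlem}'': you run the same inductive Kohn-style commutator argument but restrict to left-nested brackets of fields in $\mathcal{F}_0$, so only Proposition \ref{apest} and the smooth Fourier-side calculus of Lemma \ref{pscalc} (in particular \eqref{pscalc3} and \eqref{pscalc4}) are ever invoked, and neither the drift step via Lemma \ref{driftlem} nor the rough-coefficient machinery (Lemmas \ref{lem:bas_rcalc}, \ref{lem:tough_rcalc}, Corollary \ref{wcor}) is needed. Your choice $\epsilon_k = \epsilon_{k-1}/4$ is more conservative than the paper's $\epsilon \leq 2^{-k}$, but both close the induction.
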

Hence if we only need fields in $\text{Lie}(\mathcal{F}_0)$ to span the entire tangent space, we only need to employ the weaker base regularity assumption \textbf{(A1)}.

\begin{proof}[Proof of Theorem \ref{bracketlem}]
The proof will be done by induction on $k\geq 0$.  The case when $k=0$ follows by Proposition \ref{apest}.  Note, moreover, as a consequence of Lemma \ref{driftlem}, the case when $V_{k}=X$ is immediate.  Therefore, suppose that either $V_k\in \mathcal{F}_{k}$, $k\geq 1$, is such that $V_k=[V, W]$ where $V\in \mathcal{F}_{k-1}$, $W\in \mathcal{F}_{0}$ or $V\in \mathcal{F}_{k}$, $k\geq 2$, is such that $V_k=[V,W]$ where $V \in \mathcal{F}_{k-2}$ and $W\in \mathcal{F}_1$.  Fix $\epsilon \leq 2^{-k}$, let $\alpha = -1-\gamma +\epsilon$ and $A=\scb_{2\alpha}V_k$.  Observe that we may write for some $v,w \in C^{\infty}_{0}(\mathbb{R}^{d}: \mathbb{R})$:    
\begin{align*}
\| V_k u \|_{(\alpha)}^{2} &= (V u,  w A u) + (W u, v Au) + (Vu, WAu) - (Wu, VAu)\\
&=  (\scb_{2\epsilon-1-2\gamma} (w Vu), \scb_{-1} V_k u) + (\scb_{2\epsilon -1 -2\gamma} (vW u), \scb_{-1}(V_k u)) \\
& \, \, + \, (Vu, AWu) + (Vu, [W,A]u) - (Wu, AVu)-(Wu, [V,A]u).     
\end{align*}   
By the inequalities \eqref{pscalc1} and \eqref{pscalc2} and the inductive hypothesis, the first two terms in the last equality above have the required estimate.  Thus we have left to bound the final four terms.  Note first that
\begin{eqnarray*}
(Vu, [W,A]u)&=& (\scb_{2\epsilon -1 -2\gamma}(Vu), \scb_{-2\epsilon+1+2\gamma}([W,\scb_{2\epsilon -2 -2\gamma}V_k]u))
\end{eqnarray*}  
can be estimated as desired by induction and the inequality \eqref{pscalc4}, thus leaving $(Vu, AWu) -(Wu, AVu)-(Wu, [V,A]u)$ to bound.  For these terms, assume first that $V\in \mathcal{F}_{k-1}$, $W\in \mathcal{F}_{0}$.  Then the estimates for      
\begin{eqnarray*}
&&(Vu, AWu) - (Wu, AVu) - (Wu, [V,A]u)\\
&& \, \, \, \, \, = (\scb_{2\epsilon-1-2\gamma}(Vu), \scb_{-1}V_kWu)+ (Wu, \scb_{2\epsilon-2-2\gamma} V_k Vu)  + (Wu, [V,\scb_{2\epsilon -2 -2\gamma}V_k]u)
\end{eqnarray*}
follow by induction, the inequalities \eqref{pscalc2} and \eqref{pscalc4}, and Proposition \ref{apest}.  Now suppose that $V\in \mathcal{F}_{k-2}$, $W\in \mathcal{F}_{1}$.  By the previous argument and the Jacobi identity, we may suppose that $W=X$.  For the term $(Xu, [V, A]u)$, write
\begin{align*}
(X u, [V, A]u) = (\scb_{-1/2} X u, \scb_{1/2} [V, \scb_{2\epsilon -2 -2\gamma} V_k]u).
\end{align*}  
Note that since $k\geq 2$, this term has the claimed estimate by Lemma \ref{driftlem} and \eqref{pscalc4}, leaving $(Vu, AXu)-(Xu, AVu)$ to bound.  Recall that $X_0 = X + \Upsilon$ where $\Upsilon \in \mathcal{F}_0^B$.  Hence we may write
\begin{equation*}
X= X_0 - \Upsilon=\scl -\Upsilon-f - \sum_{j=1}^{r}Y_j^* Y_j .
\end{equation*}
Substituting the expression above for $X$ into 
\begin{eqnarray*}
(Vu, A Xu)-(X u, AVu)=(A^* Vu, X u)-(X u, AVu)
\end{eqnarray*}
we realize the terms involving $\scl$, $\Upsilon$, and $f$ can be bounded as desired as in the case when $V \in \mathcal{F}_{k-1}$, $W\in \mathcal{F}_{0}$.  Thus we have left to estimate
\begin{eqnarray*}
(A^* Vu, Y_j^{*}Y_ju) -(Y_j^*Y_j u, AVu) = (Y_j A^* Vu, Y_ju) -(Y_j u, Y_j AVu)
\end{eqnarray*}  
for $j=1,2,\ldots, r$.  
Because $\|Y_j u \|$ has the required estimate, we have left to bound $\|Y_j  M  u\|$ where $M$ is either $AV$ or $A^*V$.  Here is where we mollify and apply Lemma \ref{lem:tough_rcalc}.  Fix $\epsilon^* >0$ and consider $\|Y_j^{\epsilon^*} Mu \|$.  In light of Proposition \ref{apest} and the inductive hypothesis, there exists constants $C, C', C''$ independent of $\epsilon^*>0$ and $u$ such that
\begin{align*}  
\|Y_j^{\epsilon^*}  M  u\|^2 &\leq C( \RE(\scl^{\epsilon^*} Mu, Mu) + \|Mu \|^2)\\
&\leq  C'( \RE((\scl^{\epsilon^*}-f) Mu, Mu) +  \|Mu \|^2)\\
& = C'\Big( \RE( \scb_{-2\epsilon} M (\scl^{\epsilon^*}-f) u, \scb_{2\epsilon} M u) \\
&\qquad + \RE( \scb_{-2\epsilon}[\scl^{\epsilon^*}-f, M]u , \scb_{2\epsilon} M u)  + \|Mu\|^2\Big)\\
&\leq C''( \| (\scl^{\epsilon^*}-f) u \|^2 +\|\scl u\|^2+ \|u \|^2 + \|\scb_{-2\epsilon} [\scl^{\epsilon^*}-f, M]u \|^2).  
\end{align*}
Since all other terms will have the correct estimate when taking $\epsilon \downarrow 0$, we now focus our attention on the last term $\|\scb_{-2\epsilon} [\scl^{\epsilon^*}-f, M]u \|^2$.  Here we seek to apply Lemma \ref{lem:tough_rcalc}.  First write
\begin{align*}
\scl^{\epsilon^*}-f = - \sum_{j=1}^r (Y_j^{\epsilon^*})^2 + \hat{X}^{\epsilon^*}_{0}
\end{align*}
where 
\begin{align*}
\hat{X}^{\epsilon^*}_{0}= X_0^{\epsilon^*} - \sum_{j=1}^r (\rho_{\epsilon^*} * \partial_l Y_j^l) Y_j^{\epsilon^*}.  
\end{align*}
Notice $M$ is the finite sum of terms of the required form of Lemma \ref{lem:tough_rcalc}.  Thus, letting 
\begin{align*}
g_{\epsilon*}^l= (X_0^l* \rho_{\epsilon^*})-\sum_{j=1}^r (\partial_m Y^m_j *\rho_{\epsilon^*}) (Y^{l}_j * \rho_{\epsilon^*}), 
\end{align*}
now apply Lemma \ref{lem:tough_rcalc} to see that 
\begin{align*}
&\|\scb_{-2\epsilon} [\scl^{\epsilon^*}-f, M]u \|\\
&\leq \sum_{j=1}^r \|\scb_{-2\epsilon} [(Y_j^{\epsilon^*})^2, M]u \| +  \|\scb_{-2\epsilon} [\hat{X}_0^{\epsilon^*}, M]u \| \\
&\leq \sum_{j=1}^r \|\scb_{-2\epsilon} [Y_j^{\epsilon^*}, M]Y_j^{\epsilon^*}u \| + 2 \|\scb_{-2\epsilon} [Y_j^{\epsilon^*},[Y_j^{\epsilon^*}, M]]u \| +  \|\scb_{-2\epsilon} [\hat{X}_0^{\epsilon^*}, M]u \|\\
&\leq \sum_{j=1}^r C(|X_0|_{3/2}, |Y_j|_{5/2}) \times \\
&\qquad \bigg[ \| u\| + \max_{l} \| \, \,| (g_{\epsilon^*}^{l})'' u| \,\, \|_{(-1)} + \max_{l} \|\,\,|(g_{\epsilon^*}^{l})''' u| \, \, \|_{(-2)}+\|Y_j^{\epsilon^*}u\|\\
&\qquad + \max_{l}\| \, \, |(Y_j^{l} * \rho_{\epsilon^*})''' u| \, \, \|_{(-1)} + \max_{l}\| \, \, |(Y_j^{l} * \rho_{\epsilon^*})^{(4)} u| \, \, \|_{(-2)}\\
& \qquad + \max_{l}\| \, \, |(Y_j^{l} * \rho_{\epsilon^*})'' Y_j^{\epsilon*} u| \, \, \|_{(-1)}  + \max_{l}\| \, \, |(Y_j^{l} * \rho_{\epsilon^*})''' Y_j^{\epsilon^*}u| \, \, \|_{(-2)}  \bigg].
\end{align*}
By \textbf{(A2)}, we may exchange derivatives with integration in the convolution terms.  Thus, taking the limit as $\epsilon \downarrow 0$ and then applying the weighted inequalities in \textbf{(A2)} finishes the proof.    
\end{proof}

We now use the previous result to prove that $\scl$ is subelliptic under the hypotheses of Theorem \ref{thm:main}.

\begin{theorem}
\label{thm:subelliptic}
Suppose either $\textbf{\emph{(A1)}}$ is satisfied and $\text{\emph{Lie}}_{x}(\mathcal{F}_0)= \mathbb{R}^d$ for all $x\in \Omega$ or $\textbf{\emph{(A2)}}$ is valid and $\text{\emph{Lie}}_{x}(  \{X\}\cup \mathcal{F}_0)= \mathbb{R}^d$ for all $x\in \Omega$.  Then for all $Q\subset \Omega$ compact there exist constants $\delta \in (0,1]$, $C>0$ such that 
\begin{align}
\|u\|_{(\delta)}\leq C ( \|\scl u \| + \|u\|)
\end{align}
for all $u \in C_{0}(Q: \mathbb{C})$.  
\end{theorem}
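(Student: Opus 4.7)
My plan is to combine Theorem \ref{bracketlem} (or, in case \textbf{(A1)}, its corollary) with a partition-of-unity argument that decomposes each coordinate derivative $\partial_j$ into a smooth combination of iterated commutators. The first and main step is to extract a uniform bracket depth on $Q$. By the spanning hypothesis, at each $x_0 \in Q$ I can fix $d$ concrete vector fields $V^1, \ldots, V^d \in \bigcup_{k \geq 0} \mathcal{F}_k$ whose values at $x_0$ are linearly independent. Because the elements of $\mathcal{F}_0$ lie in $TC_0^\infty$, all iterated commutators have continuous coefficients, so the coefficient determinant of any such $d$-tuple is nonzero on a small open neighborhood of $x_0$. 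Compactness of $Q$ then produces a finite cover $U_1, \ldots, U_N$ of an open set $\widetilde{Q} \supset Q$ (with $\overline{\widetilde{Q}} \subset \Omega$) such that each $U_\alpha$ carries fixed vector fields $V_\alpha^1, \ldots, V_\alpha^d \in \bigcup_{k \leq K} \mathcal{F}_k$ for a uniform $K < \infty$, with coefficient matrix $M_\alpha$ invertible on $\overline{U_\alpha}$.

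Next, fix a smooth partition of unity $\{\chi_\alpha\}$ subordinate to $\{U_\alpha\}$ with $\sum_\alpha \chi_\alpha \equiv 1$ on a neighborhood of $Q$, and set $b_{\alpha,j}^i := \chi_\alpha (M_\alpha^{-1})_{ji}$; these are compactly supported smooth, hence Schwartz, on $\mathbb{R}^d$. For $u \in C_0^\infty(Q:\mathbb{C})$, the identity $\partial_j = \sum_i (M_\alpha^{-1})_{ji} V_\alpha^i$ on $U_\alpha$ patches to
\begin{equation*}
\partial_j u = \sum_{\alpha, i} b_{\alpha, j}^i \, V_\alpha^i u.
\end{equation*}
Choose $\epsilon = 2^{-K}$ and $\gamma \in (0, \epsilon)$, and set $\delta := \epsilon - \gamma \in (0, 1)$. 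Since each $V_\alpha^i u$ is supported in $Q$, Theorem \ref{bracketlem} yields $\|V_\alpha^i u\|_{(\epsilon - 1 - \gamma)} \leq C(\|\scl u\| + \|u\|)$, and applying Lemma \ref{pscalc} \eqref{pscalc1} to the Schwartz multipliers $b_{\alpha,j}^i$ at the Sobolev exponent $\epsilon - 1 - \gamma$ gives
\begin{equation*}
\|\partial_j u\|_{(\epsilon - 1 - \gamma)} \leq \sum_{\alpha,i} \|b_{\alpha,j}^i V_\alpha^i u\|_{(\epsilon - 1 - \gamma)} \leq C'(\|\scl u\| + \|u\|), \qquad j=1,\ldots,d.
\end{equation*}

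Finally, the Parseval identity
\begin{equation*}
\|u\|_{(\delta)}^2 = \|u\|_{(\delta-1)}^2 + \sum_{j=1}^d \|\partial_j u\|_{(\delta-1)}^2,
\end{equation*}
which follows from $(1+4\pi^2|\xi|^2)^\delta = (1+4\pi^2|\xi|^2)^{\delta-1}(1+4\pi^2|\xi|^2)$, combined with $\|u\|_{(\delta-1)} \leq \|u\|$ (valid since $\delta-1 \leq 0$), upgrades the previous bound to $\|u\|_{(\delta)} \leq C''(\|\scl u\| + \|u\|)$. Case \textbf{(A1)} is identical, using the corollary after Theorem \ref{bracketlem} in place of Theorem \ref{bracketlem} itself (and the cleaner exponent $\epsilon-1$), so any sufficiently small $\delta=\epsilon>0$ works. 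The step I expect to be the main obstacle is the uniform bracket-depth extraction: it is essential that iterated Lie brackets of elements of $\mathcal{F}_0 \subset TC_0^\infty$ retain enough continuity for pointwise linear independence to propagate to a neighborhood; once this is set up, compactness of $Q$ finishes the job and the remaining estimates are routine book-keeping on top of results already proved.
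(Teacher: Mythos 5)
Your proof is correct and follows essentially the same strategy as the paper's, which cites Stroock's argument: pointwise spanning of the Lie algebra propagates to a neighborhood by continuity of the smooth bracket coefficients, compactness of $Q$ extracts a finite cover with a uniform bracket depth $K$, and $\partial_j$ is expressed on $Q$ as a smooth combination of elements of $\bigcup_{k\le K}\mathcal{F}_k$ to which Theorem \ref{bracketlem} applies. You merely make explicit what the paper compresses into ``The result now follows'' --- the partition-of-unity construction of the multipliers $b^i_{\alpha,j}$, the invocation of the multiplier bound \eqref{pscalc1}, and the Parseval identity upgrading $\max_j\|\partial_j u\|_{(\delta-1)}$ to $\|u\|_{(\delta)}$.
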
  
\begin{proof}
We follow the arguments given on p. 196-197 of \cite{Str12}.  Fix $Q\subset \Omega$ compact.  By hypothesis, for each $x\in \Omega$ there exist vector fields $X_1^{x}, \ldots, X_d^{x} \in TC_0^{\infty}(\mathbb{R}^d: \mathbb{R}) $ such that 
\begin{align*}
\text{span}_{\mathbb{R}}\{X_1^{x}(x), \ldots, X_d^{x}(x) \}= \mathbb{R}^d.  
\end{align*}
Considering the matrix $A(x,y)$ with columns $X_1^x(y), \ldots, X_d^x(y)$ we see that for each $x\in\Omega$, $\text{det}(A(x,x))\neq 0$.  In particular since each $X_i^x $ has smooth coefficients, the set 
$U(x)= \{y \in \Omega\, : \,  \text{det}(A(x,y)) \neq 0 \}$ is non-empty and open.  Moreover,
\begin{align*}
Q\subset \bigcup_{x\in Q} U(x).
\end{align*}  
Since $Q$ is compact, we may extract a finite sub-cover $U(x_1), \ldots , U(x_n)$ of $Q$.  Hence by Theorem \ref{bracketlem}, there exist smooth vector fields $Z_1, Z_2, \ldots, Z_k$ and constants $\delta \in (0,1],\,  C>0$ such that
\begin{align*}
\| Z_l u \|_{(\delta-1)} \leq C( \|\scl u \| + \|u\|)
\end{align*}
for all $l=1,2, \ldots, k$, $u\in C_0^\infty(Q: \mathbb{C})$ and such that for $j=1,2, \ldots, d$
\begin{align*}
\partial_j = \textstyle{\sum}_{l=1}^k a_l(x) Z_l, \qquad x\in Q,   
\end{align*} 
where each $a_l \in C^{\infty}(Q: \mathbb{R})$.  The result now follows.   
\end{proof}

\section{Proof of Theorem \ref{thm:main} assuming subellipticity}


In this section, we prove Theorem \ref{thm:main} assuming $\mathscr{L}$ is subelliptic in $\Omega$ of some parameter $\delta >0$.  Because we need to be able to work with distributions $v\in \Bp$ as opposed to smooth functions, we begin by deriving some auxiliary results involving mollifiers and regularized kernels.  Thus let $\rho \in C_0^{\infty}(\mathbb{R}^d: [0, 1])$ satisfy $\rho\equiv 1$ on $|x|\leq 1$ and $\rho \equiv 0$ on $|x|\geq 2$.  For $\epsilon, s >0$, define $\rho_\epsilon(x)=\epsilon^{-d} \rho(\epsilon^{-1}x)$ and set 
\begin{align*}
G_s^{\epsilon}(x) = (\rho_{\epsilon} * G_s)(x).      
\end{align*}
It is easy to see that $G_s^{\epsilon} \in C^{\infty}(\mathbb{R}^d:[0, \infty))\cap L^1(\mathbb{R}^d, dx)$.  The kernel $G_s^\epsilon$ will serves as a smooth approximation to $G_s$. 

We first need the following proposition:  

\begin{proposition}
\label{prop:product}
Let $s,t >0$ satisfy $s+t >d$ and fix $\epsilon >0$.  Then the product functions $G_s G_t$ and $G_s^\epsilon G_t$ belong to $L^1(\mathbb{R}^d: [0, \infty))$.  Moreover
\begin{align*}
\widehat{G_s G_t}(\xi) = \int_{\mathbb{R}^d} \widehat{G_s}(\xi-\eta) \widehat{G_t}(\eta) \, d\eta, \qquad \widehat{G_s^\epsilon G_t}(\xi) = \int_{\mathbb{R}^d} \widehat{G_s^\epsilon}(\xi-\eta) \widehat{G_t}(\eta) \, d\eta
\end{align*}    
\end{proposition}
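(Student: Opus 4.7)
The proof naturally splits into verifying integrability and then establishing the two Fourier identities. For integrability, I would combine the pointwise estimate near the origin---namely $G_s(x) \le C|x|^{s-d}$ when $s<d$, logarithmic behavior when $s=d$, and a continuous extension when $s>d$, all derived from (p5) and (p7)---with the exponential tail provided by (p8). These give $G_s(x)G_t(x) \le C|x|^{s+t-2d}$ near zero, whose local integrability is equivalent to $s+t>d$, and exponentially small behavior at infinity. Hence $G_s G_t \in L^1$. The integrability of $G_s^\epsilon G_t$ is essentially trivial: $G_s^\epsilon$ is continuous with $\|G_s^\epsilon\|_\infty \le \|\rho_\epsilon\|_\infty\|G_s\|_1 < \infty$ by (p2), and $G_t \in L^1$ by the same property.

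Next I would prove the mollified Fourier identity directly via Fubini. The key observation is that $\widehat{G_s^\epsilon}(\xi) = \hat{\rho}(\epsilon\xi)\langle\xi\rangle^{-s}$ lies in $L^1(\mathbb{R}^d)$, because $\hat{\rho}$ is a Schwartz function and absorbs the polynomial factor $\langle\xi\rangle^{-s}$. Fourier inversion then gives $G_s^\epsilon(x)=\int \widehat{G_s^\epsilon}(\alpha)e^{2\pi i\alpha\cdot x}\,d\alpha$ for every $x$. Multiplying by $G_t(x)e^{-2\pi i\xi\cdot x}$ and integrating in $x$ produces the double integral $\iint \widehat{G_s^\epsilon}(\alpha) G_t(x) e^{-2\pi i(\xi-\alpha)\cdot x}\,dx\,d\alpha$, whose integrand is absolutely integrable on $\mathbb{R}^{2d}$ since $\widehat{G_s^\epsilon}\in L^1$ and $G_t\in L^1$. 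Fubini then delivers $\widehat{G_s^\epsilon G_t}(\xi) = \int\widehat{G_s^\epsilon}(\xi-\eta)\widehat{G_t}(\eta)\,d\eta$.

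Finally, I would deduce the unmollified identity by passing $\epsilon\downarrow 0$ in the mollified one. The right-hand side converges by dominated convergence: $\hat{\rho}(\epsilon(\xi-\eta))\to 1$ pointwise and is bounded by $\|\hat{\rho}\|_\infty$, so the integrand is dominated by $\|\hat{\rho}\|_\infty\langle\xi-\eta\rangle^{-s}\langle\eta\rangle^{-t}$, which is integrable in $\eta$ precisely because $s+t>d$. The main obstacle is the left-hand side, where the crude estimate $\|G_s^\epsilon\|_\infty\le\|\rho_\epsilon\|_\infty\|G_s\|_1$ blows up as $\epsilon\downarrow 0$ and is useless for dominated convergence. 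To obtain an $\epsilon$-uniform dominator for $G_s^\epsilon(x)G_t(x)$, I would exploit the strict radial monotonicity of $G_s$, which follows immediately from (p9) together with the positivity of $K_{v+1}$. A short case analysis then yields the pointwise bound $G_s^\epsilon(x) \le C\,G_s(|x|/2)$ for every $\epsilon\in(0,1]$: on $\{|x|\ge 4\epsilon\}$ the monotonicity directly forces $G_s^\epsilon(x)\le G_s(|x|/2)$, while on $\{|x|<4\epsilon\}$ the crude bound $\|\rho_\epsilon\|_\infty\int_{|y|\le 6\epsilon}G_s(y)\,dy \le C\epsilon^{s-d}$ still matches the lower bound on $G_s(|x|/2)$ coming from the singular asymptotics in (p7), up to a constant. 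The dominator $C\,G_s(|x|/2)G_t(x)$ carries the same $|x|^{s+t-2d}$ singularity and exponential tail as $G_s G_t$ itself, hence is in $L^1$, and dominated convergence then yields $\widehat{G_s^\epsilon G_t}(\xi)\to \widehat{G_s G_t}(\xi)$ for every $\xi$, completing the argument.
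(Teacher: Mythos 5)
Your proof is correct, and it takes a somewhat different route from the paper's.  The paper works with a double mollification $G_s^{\epsilon_1}G_t^{\epsilon_2}$ and tests against $\varphi\in\mathcal{S}$, i.e.\ it manipulates the distributional identity $\int \widehat{G_s^{\epsilon_1}G_t^{\epsilon_2}}\,\varphi = \int G_s^{\epsilon_1}G_t^{\epsilon_2}\,\hat\varphi$, passes $\epsilon_2\downarrow 0$ on the physical side using $\|G_t^{\epsilon_2}-G_t\|_1\to 0$ together with $\|G_s^{\epsilon_1}\|_\infty<\infty$, and passes $\epsilon_2\downarrow 0$ on the Fourier side by dominated convergence, then says the unregularized formula ``follows exactly in the same manner.''  You instead (i) prove the regularized identity directly as a pointwise Fubini computation, noting $\widehat{G_s^\epsilon}=\hat\rho(\epsilon\,\cdot)\langle\cdot\rangle^{-s}\in L^1$ so Fourier inversion for $G_s^\epsilon$ is literal, and (ii) pass $\epsilon\downarrow 0$ using dominated convergence on both sides, which on the physical side requires an $\epsilon$-uniform pointwise dominator for $G_s^\epsilon$.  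Constructing that dominator $G_s^\epsilon(x)\le C\,G_s(|x|/2)$ via the radial monotonicity from (p9) is the nontrivial ingredient you add, and it is a genuine contribution: the paper's closing ``in the same manner'' is actually the one place the argument is not literally symmetric (one can no longer use $\|G_s^\epsilon\|_\infty$ to absorb the other, now unbounded, factor), and your dominator is precisely what is needed to make that step rigorous.  In short, the paper's route stays at the level of weak identities and $L^1$-convergence of mollifiers, while yours is a direct pointwise/Fubini argument; yours is more self-contained and supplies the uniform bound the paper leaves implicit.
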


\begin{remark}
Because $\widehat{G_s}$ and $\widehat{G_t}$ need not belong to $L^1(\mathbb{R}^d: \mathbb{C})$, the usual convolution theorem does not immediately apply here.  
\end{remark}

\begin{proof}
Since $s+t >d$, $G_s G_t, \, G_s^\epsilon G_t \in L^1(\mathbb{R}^d, [0, \infty))$ by smoothness of the kernels away from the origin and the asymptotic formulas (p7) and (p8).  To compute their Fourier transforms, letting $\varphi \in \mathcal{S}(\mathbb{R}^d: \mathbb{C})$ and $\epsilon_1, \epsilon_2 >0$ we make use of the relation
\begin{align}
\label{eqn:Gconv}
\int_{\mathbb{R}^d} \widehat{G_s^{\epsilon_1}G_t^{\epsilon_2}}(\xi) \varphi(\xi) \, d\xi = \int_{\mathbb{R}^d} G_s^{\epsilon_1}(x) G_t^{\epsilon_2}(x) \hat{\varphi}(x)\, dx.  
\end{align}
First observe that
\begin{align*}
\int_{\mathbb{R}^d} G_s^{\epsilon_1}(x) |G_t^{\epsilon_2}(x)-G_t(x)| |\hat{\varphi}(x)|\, dx & \leq C \| G_t^{\epsilon_2}- G_t\|_{1}\rightarrow 0
\end{align*}
as $\epsilon_2 \downarrow 0$ since $G_t \in L^1(\mathbb{R}^d: [0, \infty))$.   In particular, the right-hand side of \eqref{eqn:Gconv} approaches $\int_{\mathbb{R}^d} G_s^{\epsilon_1}(x) G_t(x) \hat{\varphi}(x)\,dx$ as $\epsilon_2\downarrow 0$.  To see what happens to the left-hand side, note first that since we have regularized $G_s$ and $G_t$ we may write 
\begin{align*}
 \int_{\mathbb{R}^d} \widehat{G_s^{\epsilon_1}G_t^{\epsilon_2}}(\xi) \varphi(\xi) \, d\xi = \int_{\mathbb{R}^d} \int_{\mathbb{R}^d} \hat{\rho}(\epsilon_1(\xi-\eta)) \langle \xi-\eta\rangle^{-s} \hat{\rho}(\epsilon_2 \eta) \langle \eta\rangle^{-t}\,  \varphi(\xi) \, d\eta \, d\xi  
\end{align*}
as $\hat{\rho} \in \mathcal{S}(\mathbb{R}^d: \mathbb{C})$.  
Since $s+t >d$ and $\varphi \in \mathcal{S}(\mathbb{R}^d: \mathbb{C})$, by the dominated convergence theorem we may take the limit as $\epsilon_2 \downarrow 0$ inside both integrals on the right-hand side above to obtain the desired formula for $\widehat{G_s^{\epsilon_1}G_t}$.  The derivation of the formula for $\widehat{G_t G_s}$ follows exactly in the same manner.      
\end{proof}

By the structure of distributions belonging to $\Bp$, we recall that (see for example \cite{Hor03}) any $v\in \Bp$ can be identified with a positive measure $m_v$ on $\Omega$ through its distributional pairing $\langle\cdot, v \rangle$; that is, we have the following for all $\phi \in B_0(\Omega: \mathbb{R})$:
\begin{align*}
\langle\phi, v \rangle = \int_{\Omega} \phi(x) \, m_{v}(dx) < \infty.
\end{align*} 
This is an extremely important observation as it allows for a number of conveniences in this section.

    Using the previous proposition, we now show how we plan to bound remainder terms that will arise in various situations.  
\begin{lemma}
\label{lem:supdist}

Let $\alpha \geq 0$, $s>0$, and $\sigma$ be a multi-index satisfying $s\notin \mathbb{N}$ and the relationship $\alpha + s- |\sigma| >0$.  If $v\in \Bp$ and $\varphi \in C_0^\infty(\Omega: \mathbb{R})$, then there exists a constant $C>0$ independent of $v$ such that 
\begin{align}
\sup_{\epsilon \in (0,1)}\bigg\| \int_{\mathbb{R}^d} |x-y|^\alpha |D^\sigma G_s^\epsilon(x-y)|\,| \varphi(y)| dm_{v}(y) \bigg\| \leq C \| \psi v \|_{(|\sigma|-\alpha - s)}  
\end{align}   
for some $\psi \in C^\infty_0(\Omega: \mathbb{R})$.  

\end{lemma}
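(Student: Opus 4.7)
First I would dispose of the absolute value by positivity to reduce the estimate to a convolution against a positive measure. Since $v \in \Bp$ is identified with a non-negative Borel measure $m_v$ on $\Omega$, I would choose a non-negative cutoff $\psi \in C_0^\infty(\Omega:[0,\infty))$ with $\psi \geq |\varphi|$ everywhere (for instance $\psi = \|\varphi\|_\infty\chi$ for a smooth $\chi\equiv 1$ on $\supp(\varphi)$ and supported in $\Omega$); then $\psi v$ is a finite non-negative measure with compact support in $\Omega$, hence a tempered distribution whose Fourier transform is a bounded continuous function. Writing $h_\epsilon(z) := |z|^\alpha |D^\sigma G_s^\epsilon(z)| \geq 0$, the dominance $|\varphi|\leq \psi$ gives
\begin{equation*}
\int h_\epsilon(x-y)|\varphi(y)|\,dm_v(y) \leq (h_\epsilon * \psi v)(x),
\end{equation*}
reducing the task to bounding $\|h_\epsilon * \psi v\|_{L^2}$ uniformly in $\epsilon\in(0,1)$.

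Next I would aim for the uniform-in-$\epsilon$ pointwise bound
\begin{equation*}
h_\epsilon(z)\,\leq\, C\sum_i G_{s_i}^\epsilon(z) \quad\text{with each } s_i \geq \alpha + s - |\sigma|.
\end{equation*}
For the unmollified analog I would invoke the pointwise estimate \eqref{est:ptwiseker} from the proof of Lemma \ref{lem:kernel_a}, which applies since $\alpha+s-|\sigma|>0$ and yields $|z|^\alpha|D^\sigma G_s(z)|\leq C\sum_i G_{s_i}(z)$. To transfer the bound through the mollifier I would combine $|D^\sigma G_s^\epsilon|\leq \rho_\epsilon * |D^\sigma G_s|$ with the elementary inequality $|z|^\alpha \leq C_\alpha(|y|^\alpha + |z-y|^\alpha)$. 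The $|y|^\alpha$ piece convolves cleanly against the unmollified estimate to produce $C\sum G_{s_i}^\epsilon(z)$, while the piece $|z-y|^\alpha \leq (2\epsilon)^\alpha$ supported on $\supp(\rho_\epsilon)$ is the main obstacle: when $s\leq|\sigma|$, $|D^\sigma G_s|$ is not locally integrable at the origin, so one cannot simply apply Young's inequality. The hard part will be handling this term by a near/far split at scale $\epsilon$: for $|z|\geq 4\epsilon$ the classical estimate via the Bessel-function asymptotics (p7)--(p8) applies (kept clean by the hypothesis $s\notin\mathbb{N}$, which avoids the logarithmic case of $K_0$), and for $|z|\leq 4\epsilon$ one exploits the $\epsilon^\alpha$ prefactor together with the mollifier's smoothing to bound the remaining contribution by $G_{\alpha+s-|\sigma|}^\epsilon(z)$ up to constants.

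Once the pointwise dominance is established, I would close by Plancherel. The non-negativity of $\psi v$ propagates the inequality through convolution, $(h_\epsilon * \psi v)(x)\leq C\sum_i (G_{s_i}^\epsilon * \psi v)(x)$, and since $\widehat{G_{s_i}^\epsilon}(\xi)=\hat\rho(\epsilon\xi)\langle\xi\rangle^{-s_i}$ with $|\hat\rho|\leq\|\rho\|_1$,
\begin{equation*}
\|G_{s_i}^\epsilon * \psi v\|_{L^2}^2 \leq C\int\langle\xi\rangle^{-2s_i}|\widehat{\psi v}(\xi)|^2\,d\xi \leq C\|\psi v\|_{(-s_i)}^2 \leq C\|\psi v\|_{(|\sigma|-\alpha-s)}^2,
\end{equation*}
using $s_i\geq \alpha+s-|\sigma|$ (so $-s_i\leq|\sigma|-\alpha-s$, and larger Sobolev index means stronger norm). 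Summing over $i$ yields a bound independent of $\epsilon$ and completes the proof.
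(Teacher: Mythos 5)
Your reduction to the non-negative measure $\psi\,dm_v$ and the Plancherel closure at the end match the paper, and you correctly identify that the sticking point is the case $\lfloor s\rfloor\le|\sigma|$. But the middle of the argument has a genuine gap: the decomposition $|D^\sigma G_s^\epsilon|\le\rho_\epsilon*|D^\sigma G_s|$ is simply unusable when $s<|\sigma|$, because the pointwise derivative satisfies $|D^\sigma G_s(y)|\sim|y|^{s-|\sigma|-d}$ near the origin, which is not locally integrable, so $(\rho_\epsilon*|D^\sigma G_s|)(z)=+\infty$ for every $|z|\lesssim\epsilon$. This is not a quantitative difficulty that an $\epsilon^\alpha$ prefactor or ``the mollifier's smoothing'' can repair: once the absolute value is moved inside the convolution, the sign cancellation that makes $D^\sigma G_s^\epsilon=(D^\sigma\rho_\epsilon)*G_s$ finite is irretrievably lost, and the bound degenerates to $+\infty\ge+\infty$ on the near region. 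Your near/far split therefore never produces the claimed pointwise domination $h_\epsilon\le C\sum_i G_{s_i}^\epsilon$ with $C$ independent of $\epsilon$.

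The paper avoids this obstruction entirely. In the regime $\lfloor s\rfloor\le|\sigma|=k$ it writes $D^\sigma G_s^\epsilon=(D^{\sigma-\tau}\rho_\epsilon)*(D^\tau G_s)$ with $|\tau|=\lfloor s\rfloor$, so the kernel being convolved is $D^\tau G_s\sim|y|^{s-\lfloor s\rfloor-d}$, which \emph{is} locally integrable since $s\notin\mathbb{N}$. It then splits $|x|^\alpha=|x|^{\alpha-(k-\lfloor s\rfloor)}\,|x|^{k-\lfloor s\rfloor}$ and integrates by parts $k-\lfloor s\rfloor$ times to absorb the monomial factors $|x_i|^{k-\lfloor s\rfloor}$ into the derivatives on $\rho_\epsilon$, yielding a bound of the form $|x|^\alpha|D^\sigma G_s^\epsilon(x)|\le C|x|^{\alpha-(k-\lfloor s\rfloor)}\sum_l G_{s_l}^\epsilon(x)$ with constants uniform in $\epsilon$. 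When $\alpha<k-\lfloor s\rfloor$ a genuine negative-power singular weight $|x|^{-r}$ with $0<r<1$ survives, and neither your Lemma \ref{lem:kernel_a} nor a near/far split produces a Bessel-kernel majorant from it directly. The paper disposes of it by comparing $|x|^{-r}$ to $G_{d-r}(x)$ on $|x|<1$ and then invoking Proposition \ref{prop:product} (the Fourier-side product formula for two Bessel kernels, whose proof itself needs the regularization) together with the positivity of $m_v$. That product-kernel step is the crux of the lemma and is entirely absent from your outline.
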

\begin{proof}
In this proof, $C$ will be used to denote a generic positive constant independent of $\epsilon$; its value may change from line to line.  There are two cases: $\lfloor s \rfloor > |\sigma|=:k$ and $\lfloor s \rfloor \leq |\sigma|=k$.  Suppose first that $\lfloor s \rfloor > |\sigma|=k$.  Then it follows by Lemma \ref{lem:kernel_a} that     
\begin{align*}
|x|^\alpha |D^\sigma G_s^{\epsilon}(x)| &= |x|^\alpha \bigg|\int_{\mathbb{R}^d} \rho_\epsilon(x-y) (D^\sigma G_s)(y) \, dy \bigg|\\
&\leq C |x|^\alpha \int_{\mathbb{R}^d} \rho_\epsilon(x-y ) J_{s-k}(y) \, dy
\end{align*}
for some non-negative $J_{s-k} \in \mathcal{J}_{s-k}$.  But note that 
\begin{align*}
|x|^\alpha \int_{\mathbb{R}^d} \rho_\epsilon(x-y ) J_{s-k}(y) \, dy &\leq C  \int_{\mathbb{R}^d} |x-y|^\alpha \rho_\epsilon(x-y ) J_{s-k}(y) \,dy\\
&\qquad + C \int_{\mathbb{R}^d} \rho_\epsilon(x-y ) |y|^\alpha J_{s-k}(y) \, dy \\
&\leq  C \epsilon^\alpha (\rho_\epsilon * J_{s-k})(x) + C(\rho_\epsilon * K_{s+\alpha-k})(x) 
\end{align*}
for some $K_{s+\alpha-k} \in \mathcal{J}_{s+\alpha-k}$.  Hence in this case if $\psi \in C_0^\infty(\Omega: [0, \infty)$ with $\psi \equiv 1$ on $\text{supp}(\varphi)$ we have
\begin{align*}
 &\bigg\| \int_{\mathbb{R}^d} |x-y|^\alpha |D^\sigma G_s^{\epsilon}(x-y)||\varphi(y)|\, dm_{v}(y) \bigg\|^2\\
 &\leq \bigg\| \int_{\mathbb{R}^d} |x-y|^\alpha |D^\sigma G_s^{\epsilon}(x-y)|\psi(y)\, dm_{v}(y) \bigg\|^2\\
 & \leq C \int_{\mathbb{R}^d} \epsilon^{2\alpha} |\hat{\rho}(\epsilon \xi)|^2 (1+ 4\pi^2|\xi|^2)^{k-s} |\widehat{\psi v}(\xi)|^2 \, d\xi \\
 & \qquad +  C \int_{\mathbb{R}^d} |\hat{\rho}(\epsilon \xi)|^2(1+ 4\pi^2|\xi|^2)^{k-s-\alpha} |\widehat{\psi v}(\xi)|^2 \,ci d\xi  \\
 &\leq C  \int_{\mathbb{R}^d}(1+4\pi^2 \epsilon^2 |\xi|^2)^{\alpha}|\hat{\rho}(\epsilon \xi)|^2(1+ 4\pi^2|\xi|^2)^{k-s-\alpha} |\widehat{\psi v}(\xi)|^2 \, d\xi \\
 & \qquad +  C \int_{\mathbb{R}^d}|\hat{\rho}(\epsilon \xi)|^2(1+ 4\pi^2|\xi|^2)^{k-s-\alpha} |\widehat{ \psi v}(\xi)|^2 \, d\xi \\
 &\leq C \|\psi v\|_{(k-s-\alpha)}^2,  
\end{align*}
finishing the proof in this case.  

Now suppose that $\lfloor s \rfloor \leq k =|\sigma|$.  Let $\tau\leq \sigma$ be a multi-index with $|\tau|= \lfloor s \rfloor$ and notice now that since $s> \lfloor s \rfloor$
\begin{align*}
|x|^\alpha |D^\sigma G_s^\epsilon(x)| &= |x|^\alpha \bigg|\int_{\mathbb{R}^d}D_x^{\sigma-\tau}( \rho_\epsilon(x-y)) (D^\tau G_s)(y) \, dy \bigg|\\
&= |x|^{\alpha-(k- \lfloor s \rfloor)} |x|^{k- \lfloor s \rfloor}\bigg|\int_{\mathbb{R}^d}D_x^{\sigma-\tau}( \rho_\epsilon(x-y)) (D^\tau G_s)(y) \, dy \bigg|\\
&\leq   C |x|^{\alpha-(k- \lfloor s \rfloor)} \sum_{i=1}^d   |x_i|^{k- \lfloor s \rfloor}\bigg|\int_{\mathbb{R}^d}D_x^{\sigma-\tau}( \rho_\epsilon(x-y)) (D^\tau G_s)(y) \, dy \bigg|.
\end{align*}  
Notice  
\begin{align*}
 &|x_i|^{k- \lfloor s \rfloor}\bigg|\int_{\mathbb{R}^d}D_x^{\sigma-\tau}( \rho_\epsilon(x-y)) (D^\tau G_s)(y) \, dy \bigg|\\
 &\leq \sum_{j_1+j_2=k-\lfloor s\rfloor }C\bigg| \int_{\mathbb{R}^d} (x_i-y_i)^{j_1} D^{\sigma-\tau} (\rho_\epsilon(x-y)) y_i^{j_2} (D^\tau G_s)(y) \, dy \bigg|.     
\end{align*}
By inducting on $j_1=0,\ldots, k-\lfloor s \rfloor $, one can integrate by parts to deduce the following estimate
\begin{align*}
|x|^\alpha |D^\sigma G_s^\epsilon(x)| \leq C |x|^{\alpha-(k- \lfloor s \rfloor)} \sum_{l=1}^M G_{s_l}^{\epsilon}(x)
\end{align*}
where $s_l \geq s-\lfloor s \rfloor$.   If $\alpha -(k-\lfloor s \rfloor)\geq 0$, we can use the same line of reasoning in the first case to establish the result.  If, however, $\alpha -(k-\lfloor s \rfloor)<0$ first notice that $-r:=\alpha -(k-\lfloor s \rfloor) >-1$.  But since $m_v$ is a positive measure, we see that for $\psi \in C_0^\infty(\Omega: [0, \infty))$ with $\psi \equiv 1 $ on $\text{supp}(\varphi)$:
\begin{align*}
\bigg\| \int_{\mathbb{R}^d} \frac{G_{s_l}^{\epsilon}(x-y)}{|x-y|^r}\, |\varphi(y)|dm_{v}(y)\bigg\|& \leq   C \bigg\| \int_{|x-y|< 1} \frac{G_{s_l}^{\epsilon}(x-y)}{|x-y|^r}\, \psi(y) dm_{v}(y)\bigg\|\\
&+ \bigg\| \int_{|x-y|\geq 1} \frac{G_{s_l}^{\epsilon}(x-y)}{|x-y|^r}\, \psi(y) dm_{v}(y)\bigg\| \\
&\leq C \bigg\| \int_{\mathbb{R}^d}G_{s_l}^\epsilon(x-y)G_{d-r}(x-y)\, \psi(y) dm_{v}(y)\bigg\|\\
&+ C\bigg\| \int_{\mathbb{R}^d}G_{s_l}^{\epsilon}(x-y)\, \psi(y) dm_{v}(y)\bigg\|. 
\end{align*}  
First observe that 
\begin{align*}
\bigg\| \int_{\mathbb{R}^d}G_{s_l}^{\epsilon}(x-y)\, \varphi(y) dm_{v}(y)\bigg\| \leq \| \varphi v\|_{(-s_l)}\leq \| \varphi v\|_{(k-\alpha-s)}.
\end{align*}
For the remaining term, apply Proposition \ref{prop:product} and non-negativity of $v$ to see that for $\psi \in C_0^\infty(\Omega: [0, \infty))$ with $\psi \equiv 1$ on $\text{supp}(\varphi)$:
\begin{align*}
& \bigg\| \int_{\mathbb{R}^d}G_{s_l}^\epsilon(x-y)G_{d-r}(x-y)\,\psi(y) dm_{v}(y)\bigg\|\\
& \leq  C \bigg\| \int_{\mathbb{R}^d}G_{s_l}(x-y)G_{d-r}(x-y)\, \psi(y) dm_{v}(y)\bigg\|\\
&\leq  C \bigg\| \int_{\mathbb{R}^d}G_{\alpha + s - k}(x-y)\, \psi(y) dm_{v}(y)\bigg\|\\
& = C \| \psi v \|_{(k-\alpha - s)}. 
\end{align*}
Note that the penultimate line above follows by smoothness of the kernels away from the origin and the asymptotic formulas (p7) and (p8).  This finishes the proof.  
\end{proof}

Recalling that $\scl= \sum_{j=1}^r Y_j^* Y_j + X + f$, we now use the previous lemma to establish the following:
\begin{lemma}
\label{lem:52}
Under the hypotheses of Theorem \ref{thm:main}, fix $s< \bS$ and suppose that $u, \scl u \in H^{s}_{\emph{\text{loc}}}(\Omega:\mathbb{R})$.  For $\epsilon >0$, define $A^\epsilon_s= \psi \widetilde{\scb}_{s}^\epsilon \eta$ where $\psi, \eta \in C_0^\infty(\Omega: \mathbb{R})$ and $\widetilde{\scb}_{s}^\epsilon$ is the following operator  
\begin{align*}
\widetilde{\scb}_{t}^\epsilon u= (u * x^\tau D^\sigma G_{t}^\epsilon) 
\end{align*}
where $t>0$ and $\sigma$ and $\tau$ are multi-indices satisfying $s + |\tau|-|\sigma| \geq - s$.  Then 
\begin{align*}
\sup_{\epsilon \in (0,1)} \|[f, A^\epsilon_s] v \|, \, \, \, \sup_{\epsilon \in (0,1)} \|[X_0, A^\epsilon_s] v \|, \, \, \, \sup_{\epsilon \in (0,1)} \|Y_j A^\epsilon_s v \|
\end{align*}   
are all finite.  
\end{lemma}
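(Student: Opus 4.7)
The plan is to reduce each of the three quantities to a pointwise integral estimate of a non-negative integrand against the measure $m_v$ associated to the distribution $v\in\Bp$, and then invoke Lemma \ref{lem:supdist} to obtain the uniform-in-$\epsilon$ bound. Throughout, I will write the convolution kernel as $K^\epsilon(z):= z^\tau D^\sigma G_t^\epsilon(z)$, so that
\[
\widetilde{\scb}_s^\epsilon(\eta v)(x) \;=\; \int_{\mathbb{R}^d} K^\epsilon(x-y)\,\eta(y)\,dm_v(y)
\]
is a smooth function of $x$. For each operator $T\in\{f,X_0,Y_j\}$, I would first distribute the commutator via Leibniz,
\[
[T,A_s^\epsilon] \;=\; [T,\psi]\,\widetilde{\scb}_s^\epsilon\,\eta \;+\; \psi\,[T,\widetilde{\scb}_s^\epsilon]\,\eta \;+\; \psi\,\widetilde{\scb}_s^\epsilon\,[T,\eta],
\]
noting that the two outer commutators are either zero (when $T=f$) or multiplication by smooth compactly-supported functions (when $T=X_0$ or $Y_j$); both situations reduce immediately to a direct application of Lemma \ref{lem:supdist} with $\alpha=0$.

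The work then concentrates on the middle term $\psi[T,\widetilde{\scb}_s^\epsilon](\eta v)$. For $T=f$ we have
\[
[f,\widetilde{\scb}_s^\epsilon](\eta v)(x) \;=\; \int_{\mathbb{R}^d}(f(x)-f(y))\,K^\epsilon(x-y)\,\eta(y)\,dm_v(y),
\]
and a Hölder/Taylor bound $|f(x)-f(y)|\le C|x-y|^\alpha$ (with $\alpha$ chosen consistent with $f\in C_0^{\bS}$) absorbs an extra factor of $|x-y|^\alpha$ into the kernel, yielding an integrand to which Lemma \ref{lem:supdist} applies and bounds the result by $\|\psi_1 v\|_{(s')}$ for some $s'\le s$ and $\psi_1\in C_0^\infty(\Omega:\mathbb{R})$. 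For $T=X_0^l\partial_l$, a distributional integration by parts moves the $\partial_l$ off $X_0^l\partial_l(\eta v)$ and onto $K^\epsilon$, rewriting $[X_0,\widetilde{\scb}_s^\epsilon](\eta v)$ as
\[
\int(X_0^l(x){-}X_0^l(y))\,\partial_{x_l}K^\epsilon(x{-}y)\,\eta(y)\,dm_v(y) \;+\; \int(\partial_l X_0^l)(y)\,K^\epsilon(x{-}y)\,\eta(y)\,dm_v(y),
\]
in which the Lipschitz factor supplied by $X_0^l\in C^{\bS+1}$ absorbs the additional derivative on $K^\epsilon$ in the first integrand, while the second is of lower differential order; both are again covered by Lemma \ref{lem:supdist}.

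For $\|Y_j A_s^\epsilon v\|$ I compute directly
\[
Y_j(\psi\widetilde{\scb}_s^\epsilon(\eta v))(x) \;=\; (Y_j\psi)(x)\,\widetilde{\scb}_s^\epsilon(\eta v)(x) \;+\; \psi(x)\,Y_j^l(x)\!\int(\partial_{x_l}K^\epsilon)(x-y)\,\eta(y)\,dm_v(y),
\]
and split $Y_j^l(x)=Y_j^l(y)+(Y_j^l(x)-Y_j^l(y))$ in the second summand. The difference contributes the Lipschitz factor (via $Y_j^l\in C^{\bS+2}$) that cancels the extra derivative on $K^\epsilon$, exactly as in the $X_0$ case. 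The main piece $Y_j^l(y)(\partial_{x_l}K^\epsilon)(x-y)=-Y_j^l(y)(\partial_{y_l}K^\epsilon)(x-y)$ is rewritten distributionally using the dual relation $Y_j^*=-Y_j-\partial_lY_j^l$, which moves the derivative off $K^\epsilon$ and onto smooth combinations of $Y_j^l,\eta$ and their first derivatives; the resulting integrands are once again of the form handled by Lemma \ref{lem:supdist}.

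The main obstacle is the parameter bookkeeping: each manipulation yields a new kernel of some order in a class $\mcj_{t'}$, and at each step one must verify that the hypothesis $\alpha+t-|\sigma|>0$ of Lemma \ref{lem:supdist} continues to hold, using the standing constraint $s+|\tau|-|\sigma|\ge -s$ on $\widetilde{\scb}_s^\epsilon$, the Hölder exponents dictated by the regularity assumptions on $f,X_0,Y_j$, and the strict inequality $s<\bS$. A secondary subtlety is that every integration by parts must be interpreted distributionally against $m_v$, and the non-negativity of $v$ is essential so that after taking absolute values of the integrands the resulting pointwise expressions pair meaningfully with $m_v$ in precisely the form required by Lemma \ref{lem:supdist}.
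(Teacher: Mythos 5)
Your treatment of $[f,A_s^\epsilon]$ and $[X_0,A_s^\epsilon]$ is essentially the paper's argument: expand the commutator, Taylor-expand the coefficient, and control each remainder with Lemma~\ref{lem:supdist}. (Minor slip in the $X_0$ case: after the distributional integration by parts the surviving lower-order piece involves $\partial_l(\eta X_0^l)$ rather than $\eta\,\partial_lX_0^l$, but the extra $(\partial_l\eta)X_0^l$ term is harmless.)

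The $Y_j$ estimate is a genuine gap, and it is the heart of the lemma. You treat $\|Y_jA_s^\epsilon v\|$ as if it were just another commutator that can be handled by a direct kernel bound, but $Y_jA_s^\epsilon v$ is a \emph{first-order derivative} of $A_s^\epsilon v$, not a commutator, so there is no cancellation between an $x$-side and a $y$-side term. Concretely, your ``main piece'' $\psi(x)\int Y_j^l(y)\eta(y)\,(\partial_{x_l}K^\epsilon)(x-y)\,dm_v(y)$ cannot be lowered in order: rewriting $\partial_{x_l}K^\epsilon=-\partial_{y_l}K^\epsilon$ and integrating by parts in $y$ necessarily places $\partial_{y_l}$ on $v$ as well, producing a term $\int K^\epsilon(x-y)\,Y_j^l(y)\eta(y)\,\partial_{y_l}v(y)\,dy$ that $v\in H^{s}_{\text{loc}}$ cannot absorb; the dual relation $Y_j^*=-Y_j-\partial_lY_j^l$ does not change this. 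On the Fourier side, $A_s^\epsilon$ has order roughly $s$, so $A_s^\epsilon v$ sits in $L^2$ but not in $H^1$, and a bare first-order derivative of an $L^2$ function has no a priori $L^2$ bound. The missing ingredient is the PDE: the paper uses Proposition~\ref{apest} to get $\sum_j\|Y_jA_s^\epsilon v\|^2\le \RE(\scl A_s^\epsilon v,A_s^\epsilon v)+C\|A_s^\epsilon v\|^2$, then commutes $\scl$ through $A_s^\epsilon$ using the already-established bounds on $[f,A_s^\epsilon]v$ and $[X_0,A_s^\epsilon]v$; the dangerous piece $[Y_j^2,A_s^\epsilon]$ is written as $2Y_j[Y_j,A_s^\epsilon]-[Y_j,[Y_j,A_s^\epsilon]]$ so that the single $Y_j$ out front can be absorbed into the left-hand side by Cauchy--Schwarz, leaving only the double commutator $[Y_j,[Y_j,A_s^\epsilon]]v$, which \emph{is} a zeroth-order quantity and can be estimated by the Taylor/kernel machinery you describe. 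Without this energy-identity step, the $Y_j$ conclusion of the lemma does not follow.
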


\begin{proof}
Again, $C$ will denote a positive constant independent of $\epsilon$.  Here we write for simplicity $J_{-s}^\epsilon=x^\tau D^\sigma G_t^\epsilon$ and start by showing $\sup_{\epsilon \in (0,1)}\|  [f,A^{\epsilon}_s] v \|< \infty $.  Let $k$ be the smallest non-negative integer such that $k >s$.  Then  
  \begin{align*}
 - [f, A^{\epsilon}_s]v &= \psi(x) \int_{\mathbb{R}^d} (f(y)-f(x)) J_{-s}^\epsilon (x-y) \eta(y) \, dm_v(y)\\
  &=\psi(x) \int_{\mathbb{R}^d} \bigg( f(y)-\sum_{|\upsilon|\leq \max(k-1,0)}\frac{D^\upsilon f(x)}{\upsilon !}(y-x)^\upsilon\bigg)   J_{-s}^\epsilon (x-y) \eta(y) \, dm_v(y)\\
 & + \psi(x)\sum_{1 \leq |\upsilon|\leq \max(k-1,0)}\frac{D^\upsilon f(x)}{\upsilon !} \int_{\mathbb{R}^d}   (y-x)^\upsilon  J_{-s}^\epsilon(x-y) \eta(y) \, dm_v(y).
  \end{align*}
  Since $f\in C^{s^+}_0(\mathbb{R}^d: \mathbb{R})$ for all $s<s^+<s^*$, we now find by Lemma \ref{lem:supdist} and the correspondence in Fourier space that 
   \begin{align*}
  \sup_{\epsilon \in (0,1)}\|[f, A_s^{\epsilon}]v\| < \infty.  
  \end{align*}
  Next we turn our attention to bounding $\sup_{\epsilon \in (0,1)}\|  [X_0 ,A_s^{\epsilon}] v \|$.  Interpreting derivatives in the weak sense, realize first that
  \begin{align*}
  |[X_0 ,A_s^{\epsilon}] v (x)|&\leq C \bigg| \int_{\mathbb{R}^d} (X^l_0(x)-X^l_0(y)) \partial_l J _{-s}^{\epsilon}(x-y) \eta(y) \, dm_v(y)\bigg|\\
&+ C \bigg| \int_{\mathbb{R}^d} \partial_l(X^l_0(y)\eta(y)) J_{-s}^{\epsilon}(x-y) \tilde{\eta}(y) \, dm_v(y)\bigg|  \\
  &+ C T^\epsilon v(x)
  \end{align*}
  where $\tilde{\eta}\in C_0^{\infty}(\Omega: [0,1])$ is such that $\tilde{\eta}=1$ on $\supp(\eta)$ and $\sup_{\epsilon \in (0,1)}\|T^\epsilon v\|< \infty.$  Since $X_0 \in T C_0^{s^+ +1}(\mathbb{R}^d: \mathbb{R})$ for any $s<s^+<s^*$, we can do the same Taylor formula trick as in the case of $[f, A_s^\epsilon]v$ above on both remaining terms  and conclude that 
$$\sup_{\epsilon \in (0,1)}\|  [X_0 ,A_s^{\epsilon}] v \|< \infty.$$

  The estimate for $\|Y_j A_s^\epsilon u\|$ is more involved.  First notice by Proposition \ref{apest} and the previous two estimates we have by Cauchy-Schwarz and the hypothesis $v, \scl v \in H^{s}_{\text{loc}}(\Omega: \mathbb{R})$  
  \begin{align*}
\sum_{j=1}^r  \|Y_j A_s^\epsilon v\|^2 & \leq \RE(\scl A_s^\epsilon v, A_s^\epsilon v ) + C \| A_s^\epsilon v\|^2 \\
  &= \RE(A_s^\epsilon \scl v, A_s^\epsilon v) + \RE([\scl, A_s^\epsilon]v, A_s^\epsilon v) + C\|A_s^\epsilon v \|^2 \\
  &\leq C+ C \sum_{j=1}^r ([Y_j^*Y_j, A_s^\epsilon]v, A_s^\epsilon v). 
  \end{align*}
  Notice that $Y_j^*Y_j=-Y_j^2 + \Tilde{Y}_j$ for some vector field $\tilde{Y}_j \in TC_0^{s^+ +1}(\mathbb{R}^d: \mathbb{R})$ where $s<s^+ <\bS$.  By the estimate for $[X_0, A_s^\epsilon]v$, we then see that 
 \begin{align*}
  \sum_{j=1}^r\|Y_j A_s^\epsilon v\|^2 &\leq C + C \sum_{j=1}^r|([Y_j^2, A_s^\epsilon]v, A_{s}^\epsilon v)|\\
  &\leq C+C\sum_{j=1}^r|(2Y_j[Y_j, A_s^\epsilon]v, A_{s}^\epsilon v)| +|([Y_j,[Y_j, A_s^\epsilon]]v, A_s^\epsilon v)|\\
  &\leq C(c)+ \frac{D}{c} \sum_{j=1}^r \| Y_j A_s^\epsilon v \|^2  + E\|[Y_j, [Y_j, A_s^\epsilon]]v\|^2  
  \end{align*}  
  for every $c>0$ for some constants $C(c), D, E>0$ where $D,E>0$ do not depend on $c$ but $C(c)\rightarrow \infty$ as $c\rightarrow \infty$.  Upon choosing $c>0$ sufficiently large, it suffices to show that 
  \begin{align*}
 \sup_{\epsilon \in (0,1)} \|[Y_j, [Y_j, A_s^\epsilon]]u \| < \infty.  
  \end{align*}    
By linearity, to prove the above it suffices to show
 \begin{align*}
 \sup_{\epsilon \in (0,1)} \|[f\partial_l, [g \partial_m, A_s^\epsilon]]v \| < \infty.  
  \end{align*} 
  where $f,g\in C_0^{s^+ +2}(\mathbb{R}^d: \mathbb{R})$, $s<s^+< \bS$.  To show this, it is helpful to apply the product rule all the way through the convolution operator $\widetilde{\scb}_{s}^{\epsilon}$.  For example, we expand each term as follows:
  \begin{align*}
  g\partial_m( \psi \widetilde{\scb}_{s}^{\epsilon} \eta v) = g \partial_m(\psi) \widetilde{\scb}_{s}^{\epsilon} \eta v + g \psi\widetilde{\scb}_{s}^{\epsilon} \partial_m(\eta) v + g \psi \widetilde{\scb}_{s}^{\epsilon}\eta \partial_m(v)
  \end{align*}  
  where of course all derivatives are interpreted in the weak sense with respect to the distributional pairing $\langle \cdot , \cdot \rangle. $  Doing this to the full operator we obtain 
  \begin{align*}
  &[f\partial_l, [g \partial_m, A_s^\epsilon]]v(x)\\
  &= \psi \widetilde{\scb}_{s}^{\epsilon}(f(x)-f(\cdot))(g(x)-g(\cdot)) \eta \partial_l \partial_m(v) \\
  &+g\partial_m(\psi) \widetilde{\scb}_{s}^{\epsilon} (f(x)-f(\cdot))\eta \partial_l(v)+  g \psi \widetilde{\scb}_{s}^{\epsilon} (f(x)-f(\cdot))\partial_m(\eta)\partial_l(v) \\
  & +   f\psi\widetilde{\scb}_{s}^{\epsilon} (\partial_l(g)(x)-\partial_l(g)(\cdot))\eta \partial_m(v) +   f\partial_l(\psi)\widetilde{\scb}_{s}^{\epsilon} (g(x)-g(\cdot))\eta \partial_m(v)\\
  & +   f\psi \widetilde{\scb}_{s}^{\epsilon}(g(x)-g(\cdot))\partial_l(\eta) \partial_m(v) +  \psi \widetilde{\scb}_{s}^{\epsilon}(g(x)-g(\cdot))\partial_l(f) \eta \partial_m(v)  
 \\
 &+ T_0^{\epsilon}v(x) 
  \end{align*}
  where $\sup_{\epsilon \in (0,1)}\|T_0^\epsilon v\| < \infty$.  After integrating by parts and applying Taylor's formula on each term, we obtain the claimed estimate since $f,g \in C_0^{s^+ + 2}(\mathbb{R}^d:\mathbb{R})$ for any $s<s^+<\bS$.  This finishes the proof of the lemma.   
\end{proof}

Finally we conclude this section by proving the following result which establishes Theorem \ref{thm:main} assuming the estimates of Lemma \ref{lem:tough_rcalc} are true.  
\begin{theorem}
Under the hypotheses of Theorem \ref{thm:main}, fix $s< \bS$ and suppose that $v\in \Bp$.  If $\scl $ is subelliptic of order $\delta >0$ in $\Omega$ then $$v, \scl v \in H^{s}_{\emph{\text{loc}}}(\Omega: \mathbb{R})\implies u \in H^{s+\delta}_{\emph{\text{loc}}}(\Omega:\mathbb{R}).$$  
\end{theorem}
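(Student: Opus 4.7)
Fix $\varphi \in C_0^\infty(\Omega : \mathbb{R})$; the goal is to bound $\|\varphi v\|_{(s+\delta)}$. Choose $\psi, \eta \in C_0^\infty(\Omega : \mathbb{R})$ supported in a common compact set $Q \subset \Omega$, with $\psi \equiv 1$ on a neighborhood of $\supp(\varphi)$ and $\eta \equiv 1$ on a neighborhood of $\supp(\psi)$. The strategy is to apply the subellipticity estimate of Theorem \ref{thm:subelliptic} to the smooth, $Q$-supported test function $A_s^\epsilon v$ introduced in Lemma \ref{lem:52}, with the kernel parameters $(t, \sigma, \tau)$ selected so that the unregularized limit $\widetilde{\scb}_s^0$ carries the correct order $s$ needed to translate an $H^\delta$-bound on $A_s^\epsilon v$ into an $H^{s+\delta}$-bound on $\varphi v$. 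Subellipticity then yields
\[
\|A_s^\epsilon v\|_{(\delta)} \leq C \bigl( \|\scl A_s^\epsilon v\| + \|A_s^\epsilon v\| \bigr).
\]

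\textbf{Uniform bounds in $\epsilon$.} Since $A_s^\epsilon$ is (after the $\eta$-localization) a bounded operator of order $s$ by its kernel structure and Lemma \ref{lem:kernel_a}, the hypotheses $v, \scl v \in H^s_{\text{loc}}(\Omega)$ give uniform-in-$\epsilon$ bounds on $\|A_s^\epsilon v\|$ and on the term $\|A_s^\epsilon \scl v\|$ appearing in $\scl A_s^\epsilon v = A_s^\epsilon \scl v + [\scl, A_s^\epsilon] v$. For the commutator, expand $[\scl, A_s^\epsilon] = [f, A_s^\epsilon] + [X_0, A_s^\epsilon] + \sum_j [Y_j^* Y_j, A_s^\epsilon]$; Lemma \ref{lem:52} directly controls the first two pieces uniformly in $\epsilon$. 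For the diffusion terms, write $[Y_j^* Y_j, A_s^\epsilon] = Y_j^* [Y_j, A_s^\epsilon] + [Y_j^*, A_s^\epsilon] Y_j$ and exploit $Y_j^* = -Y_j - \text{div}(Y_j)$ to reduce to first-order commutators and to the quantity $\sup_\epsilon \|Y_j A_s^\epsilon v\| < \infty$ furnished by Lemma \ref{lem:52}, with a Cauchy--Schwarz absorption along the lines of the proof of Theorem \ref{bracketlem} handling the cross terms.

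\textbf{Passing to the limit.} With the right-hand side uniformly bounded in $\epsilon$, weak compactness in $H^\delta$ combined with the convergence $\widetilde{\scb}_s^\epsilon \to \widetilde{\scb}_s^0$ (off the diagonal and at the level of symbols) shows that the distributional limit $A_s^0 v$ lies in $H^\delta(\mathbb{R}^d : \mathbb{R})$. By the cutoff choice $\varphi v = \varphi \psi \eta v$, together with the standard commutator estimate $\|[\widetilde{\scb}_s^0, \psi] u\|_{(\delta)} \leq C \|u\|_{(s+\delta-1)}$ (valid when $\delta \leq 1$, which holds by Theorem \ref{thm:subelliptic}):
\[
\|\varphi v\|_{(s+\delta)} \leq C \|\psi \eta v\|_{(s+\delta)} \leq C \|A_s^0 v\|_{(\delta)} + C \|\eta v\|_{(s)} < \infty,
\]
giving $v \in H^{s+\delta}_{\text{loc}}(\Omega : \mathbb{R})$.

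\textbf{Main obstacle.} The principal difficulty is the uniform-in-$\epsilon$ control of the commutator $[\scl, A_s^\epsilon] v$ given that $v$ is only a non-negative distribution. The arguments depend crucially on the identification of $v$ with the positive Borel measure $m_v$ and on Lemma \ref{lem:supdist}, which converts pointwise kernel bounds into Sobolev norms; without the sign condition $v \in \Bp$ these absolute-value estimates would fail outright. A subsidiary issue is that the admissible kernel class of Lemma \ref{lem:52} forces $t > 0$, so the limit operator $\widetilde{\scb}_s^0$ is not literally $\scb_s$, and the step linking $\|A_s^0 v\|_{(\delta)}$ to $\|\varphi v\|_{(s+\delta)}$ hinges on a careful choice of the multi-indices $(\sigma, \tau)$ to recover the correct symbol behavior in the limit.
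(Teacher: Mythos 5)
Your overall template is the right one — apply the subellipticity estimate to a regularized, localized version of $v$, control the commutator $[\scl,A_s^\epsilon]v$ uniformly in $\epsilon$, and pass to the limit — and your identification of $\Bp$/Lemma~\ref{lem:supdist} as the place where positivity enters is correct.  However, there are two concrete gaps.

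\textbf{Recovering the full $H^{s+\delta}$ norm.}  You want $\|\varphi v\|_{(s+\delta)} \leq C\|A_s^0 v\|_{(\delta)} + C\|\eta v\|_{(s)}$ for a single operator $A_s^0 = \psi\,\widetilde{\scb}_s^0\,\eta$.  But the admissible kernels are $x^\tau D^\sigma G_t$ with $t>0$, so the symbol of any single $\widetilde{\scb}_s^0$ is (a $\xi$-derivative of) $(2\pi i\xi)^\sigma\langle\xi\rangle^{-t}$, which vanishes on the zero set of $\xi^\sigma$.  No single such kernel can dominate $\langle\xi\rangle^s$, so the claimed lower bound on $\|A_s^0 v\|_{(\delta)}$ fails.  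The paper circumvents this by fixing $k=\lceil s\rceil$ and writing, via Fatou in Fourier space,
\begin{align*}
\|\psi v\|_{(s+\delta)} \leq C\sum_{|\sigma|\leq k}\limsup_{\epsilon\to 0}\|D^\sigma \scb_{s-k}^\epsilon(\psi v)\|_{(\delta)},
\end{align*}
so that the operators $D^\sigma\scb_{s-k}^\epsilon$ for \emph{all} $|\sigma|\leq k$ collectively recover the weight $\langle\xi\rangle^s$.  You then run the subellipticity argument for each $\sigma$ separately.  You gesture at this in your ``Main obstacle'' paragraph but never actually perform the multi-index summation, and the argument does not close without it.

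\textbf{The second-order commutator.}  The decomposition $[Y_j^*Y_j, A_s^\epsilon] = Y_j^*[Y_j,A_s^\epsilon] + [Y_j^*,A_s^\epsilon]Y_j$ is algebraically correct but does not by itself give a uniform $L^2$ bound.  Both pieces are of order $s+1$ acting on $v\in H^s_{\text{loc}}$, so each lands in $H^{-1}$, one order short of what subellipticity requires in $\|\scl\mathcal{A}_s^\epsilon v\| = \|\mathcal{A}_s^\epsilon\scl v\| + \|[\scl,\mathcal{A}_s^\epsilon]v\|$.  The Cauchy--Schwarz absorption you invoke is the mechanism used \emph{inside} the proof of Lemma~\ref{lem:52} to bound $\sup_\epsilon\|Y_j A_s^\epsilon v\|$ (where one pairs against $A_s^\epsilon v$ and absorbs into $\sum_j\|Y_j A_s^\epsilon v\|^2$); it is not available here because we need the raw $L^2$ norm of the commutator, not a bilinear form one can symmetrize.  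What actually produces the needed cancellation in the paper is a Taylor expansion of the coefficient $Y_j^{l,m}(y)$ about $y=x$ to order $k+1$ inside the kernel integral, so that the leading term of $\psi D^\sigma\scb_{s-k}^\epsilon\eta\,Y_j^{l,m}\partial_l\partial_m v$ exactly cancels $\psi Y_j^{l,m}\partial_l\partial_m D^\sigma\scb_{s-k}^\epsilon\eta v$, and the remainder is controlled by Lemma~\ref{lem:supdist} and the $C^{\bS+2}$ regularity of the coefficients.  This is precisely where the hypothesis $s<\bS$ (and $Y_j\in TC^{\bS+2}$) is used, and your outline never touches it.  Without that cancellation the estimate does not hold, so this is not merely a skipped detail but the crux of the theorem.
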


\begin{proof}
Letting $\psi \in C_0^\infty(\Omega: \mathbb{R})$ be arbitrary, our goal is to show that $$\|\psi v \|_{(s+\delta)}<\infty $$ 
for $s< \bS$, $s\notin \mathbb{Z}$.  Note that we need not show the result for integer values of $s$.  Let $k$ be the smallest non-negative integer strictly larger $s$ and notice by Fatou's lemma applied in Fourier space
\begin{align*}
\|\psi v \|_{(s+\delta)}& \leq C\sum_{|\sigma|\leq k} \limsup_{\epsilon \rightarrow 0} \| D^\sigma \scb_{s-k}^{\epsilon} (\psi v ) \|_{(\delta)}.
\end{align*}
Fixing $|\sigma|\leq k$, our goal now is to show that $$\limsup_{\epsilon \rightarrow 0}\| D^\sigma \scb_{s-k}^{\epsilon} (\psi v ) \|_{(\delta)}< \infty.$$ We will accomplish this by bounding $\| D^\sigma \scb_{s-k}^{\epsilon} (\psi u ) \|_{(\delta)}$ independent of $\epsilon >0$.  Notice that   
\begin{align*}
 \| D^\sigma \scb_{s-k}^{\epsilon} (\psi v ) \|_{(\delta)}\leq  \| \psi D^\sigma \scb_{s-k}^{\epsilon} (\eta v ) \|_{(\delta)}+\| [D^\sigma \scb_{s-k}^{\epsilon}, \psi] (\eta v ) \|_{(\delta)}
\end{align*}
where $\eta$ is any function with $\eta \in C_0^{\infty}(\Omega: [0,1])$ and $\eta\equiv 1$ on $\supp(\psi)$.  Letting $\mathcal{A}_s^{\epsilon} = \psi D^\sigma \scb_{s-k}^{\epsilon} \eta$ and using the fact that $\scl$ is subelliptic of order $\delta \in (0,1)$ in $\Omega$, we obtain   
\begin{align*}
 \| D^\sigma \scb_{s-k}^{\epsilon} (\psi v)  \|_{(\delta)}&\leq  \| \mathcal{A}_{s}^{\epsilon} v  \|_{(\delta)}+\| [D^\sigma \scb_{s-k}^{\epsilon}, \psi] (\eta v ) \|_{(\delta)}\\
&\leq  C(\| \scl \mathcal{A}_s^{\epsilon} v  \| + \|\mathcal{A}_s^{\epsilon} v \|) + \| [D^\sigma \scb_{s-k}^{\epsilon}, \psi] (\eta v ) \|_{(\delta)}\\
&\leq C (\| \mathcal{A}_s^{\epsilon} \scl v  \| + \|  [\scl,\mathcal{A}_s^{\epsilon}] v  \|+ \|\mathcal{A}_s^{\epsilon} v \|) \\
&\qquad + \| [D^\sigma \scb_{s-k}^{\epsilon}, \psi] (\eta v ) \|_{(\delta)}\\
&\leq C(1 + \|  [\scl,\mathcal{A}_s^{\epsilon}] v  \|+\| [D^\sigma \scb_{s-k}^{\epsilon}, \psi] (\eta v ) \|_{(\delta)})  
\end{align*}
where all constants above are independent of $\epsilon >0$ and on the last line we have used the assumption that $u, \scl u \in H^{s}_{\text{loc}}(\Omega)$.  
Thus we have left to show that    
\begin{align*}  
  \sup_{\epsilon \in (0,1)}\|  [\scl,\mathcal{A}_s^{\epsilon}] v  \|, \qquad   \sup_{\epsilon \in (0,1)}\| [D^\sigma \scb_{s-k}^{\epsilon}, \psi] (\eta v ) \|_{(\delta)}.
\end{align*}
First observe that 
\begin{align*}
\| [D^\sigma \scb_{s-k}^{\epsilon}, \psi] (\eta v) \|_{(\delta)} \leq \sum_{|\tau|\leq 1 }\| D^\tau[D^\sigma \scb_{s-k}^\epsilon, \psi](\eta v )\|. 
\end{align*}
Using the ideas in the proof of the previous proposition, this term is easily seen to be bounded independent of $\epsilon$.  Thus applying Lemma \ref{lem:52}, all we have left to do is bound $\|[Y^{l,m}_j \partial_l \partial_m, \mathcal{A}_s^\epsilon]v\|$ where $Y^{l,m}_j=Y^l_j Y^m_j$ and $Y^m_j$ is as in $Y_j=Y^m_j \partial_m$.  Interpreting derivatives in the weak sense, decompose this term as follows  
\begin{align*}
[Y^{l,m}_j \partial_l \partial_m, \mathcal{A}_s^{\epsilon}]v&= Y^{l,m}_j \partial_l \partial_m (\psi D^\sigma \scb_{s-k}^{\epsilon} \eta u ) - \psi D^\sigma \scb_{s-k}^{\epsilon} \eta Y^{l,m}_j \partial_l \partial_m v\\
&= Y^{l,m}_j \partial_{lm}(\psi) D^\sigma \scb_{s-k}^{\epsilon} \eta v + 2(Y_j\psi) Y_j D^\sigma \scb_{s-l}^{\epsilon} \eta v \\
&\qquad + \psi Y^{l,m}_j \partial_{lm} D^\sigma \scb_{s-k}^{\epsilon} \eta v  - \psi D^\sigma \scb_{s-k}^{\epsilon} \eta Y^{l,m}_j \partial_l \partial_m v\\
&=(I)+(II)+(III) 
\end{align*}     
where $(I)$ and $(II)$ are the first two terms on the right-hand side of the last equality above.  We easily have the estimate
\begin{align*}
\|(I)\| \leq C \|\eta v \|_{(s)} < C.  
\end{align*}  
By Lemma \ref{lem:52}, we also have for some $\phi\in C_0^\infty(\Omega)$ 
\begin{align*}
\| (II)\| \leq C \|\phi Y_j D^\sigma \scb_{s-k}^\epsilon \eta u\| \leq C(\| Y_j \phi D^\sigma \scb_{s-k}^\epsilon \eta u\| +\|\eta u\|_{(s)})< C
\end{align*}
The estimate for $$(III)= \psi Y^{l,m}_j \partial_{lm} D^\sigma \scb_{s-k}^\epsilon \eta v  - \psi D^\sigma \scb_{s-k}^\epsilon \eta Y^{l,m}_j \partial_l \partial_m v=(III)'-(III)''$$ is more involved and we begin by further decomposing $(III)''$.  Realize by Taylor's formula we have
\begin{align*}
&(III)''(x) = \psi(x) D^\sigma \scb_{s-k}^\epsilon \eta Y^{l,m}_j \partial_{lm} v(x)\\
&= \psi(x) \sum_{0\leq |\tau |\leq k+1}(-1)^{|\tau|}(\tau !)^{-1}D^\tau Y^{l,m}_j(x) \big\langle ( x-\cdot)^\tau D^\sigma G_{k-s}^\epsilon (x- \cdot), \eta(\cdot) \partial_{lm}v(\cdot) \big\rangle \\
&\qquad + R(x).  
\end{align*}
Using the assumed regularity of $Y^{l,m}_j$ and Lemma 6.1, we can obtain the bound the desired bound for $R$:
\begin{align*}
\| R\| \leq C\| \psi v\|_{(s)} < C.   
\end{align*}
Moreover, any terms in the sum above with $|\tau |\geq 2$ also have the same estimate as $R$, so what remains to bound in $(III)''$ is 
\begin{align*}
&\psi(x) Y^{l,m}_j(x) \big\langle  D^\sigma G_{k-s}^\epsilon (x- \cdot), \eta(\cdot) \partial_{lm} v(\cdot) \big \rangle \\
&\qquad - \sum_{n=1}^d \psi(x) \partial_nY^{l,m}_j(x) \big\langle (x_n-\cdot) D^\sigma G_{k-s}^\epsilon(x-\cdot), \eta(\cdot) \partial_{lm} v(\cdot)   \big\rangle \\
&= (III)' + T(x) 
\end{align*}
where $T$ satisfies the following bound 
\begin{align*}
\|T\| \leq  C\bigg(\| \psi Y_j \widetilde{\scb}_{s}^{\epsilon} \eta v\|+ \sum_{m=1}^d \| \psi Y_j D^\sigma \scb_{s-k}^\epsilon \partial_m(\eta) v\| +\sum_{|\tau|\leq 2} \| (D^\tau\eta)v\|_{(s)}
\bigg)  
\end{align*}
for some $\widetilde{B}_s^\epsilon$ which is the sum of terms of the form required in Lemma \ref{lem:52}.  Doing as before with $(II)$, the result now follows by Lemma \ref{lem:52}.    
\end{proof}

\section{The commutator estimates}
\label{sec:Mcommest}

The goal of this section is to prove Lemma \ref{lem:tough_rcalc}.  Even though the argument establishing this result is long, the idea behind it is basic.  First, we will write out the operator $[V, \mcm ]$ explicitly, noting it can be decomposed as the sum of operators of a small number of distinct forms.  Using the Bessel kernel estimates of Section \ref{sec:frac}, we will then bound each of these quantities and their commutators with $V$ in the claimed fashion.  

Part of the novelty in the following computations is knowing when and how to integrate by parts so that minimal regularity on the vector field $V$ is enforced.     

So that the mathematical expressions of this section are compact, for $k\geq 1$ and $f,g,h: \mathbb{R}^d \rightarrow \mathbb{C}$, let $(f \Delta_g^k *h)(x)$ denote the function 
\begin{align*}          
\int_{\mathbb{R}^d} f(y) \bigg(g(x)-\sum_{|\sigma|\leq k-1 } \frac{D^\sigma g(y)}{\sigma !}(x-y)^\sigma \bigg)h(x-y) \, dy    
\end{align*}
whenever it is defined.  If $k=1$, as in Section \ref{sec:frac} we will simply write $\Delta_g$ instead of $\Delta_g^1$.

We now proceed as described above.  Fix $Q\subset \Omega$ compact and let $u\in C_0^{\infty}(Q: \mathbb{C})$ be arbitrary.  First observe that   
\begin{align*}
[V, \mcm ] u &= a( b\Delta_{V^l} \partial_i \partial_j \partial_l u *J) +  V^l \partial_l(a)( b \partial_i \partial_j  u *J )+ V^l a ( \partial_l(b) \partial_i \partial_j u * J)\\
\nonumber & - a(b \partial_j(V^l) \partial_i \partial_l u * J)- a(b \partial_i (V^l) \partial_j \partial_l u * J)- a(b \partial_i \partial_j(V^l) \partial_l u * J ).  
\end{align*}
Using this expression, leave terms that are of the same form as $\mcm$ as is and then integrate by parts on the remaining terms to find that 
\begin{align*}
[V, \mcm ] u &= a( b\Delta_{V^l} \partial_j \partial_l u *\partial_i J) - a ( \partial_i(b \Delta_{V^l}) \partial_j \partial_l u * J )+V^l \partial_l(a)( b \partial_i \partial_j  u * J)\\
\nonumber&+ V^l a ( \partial_l(b) \partial_i \partial_j u * J) - a(b \partial_j(V^l) \partial_i \partial_l u * J)- a(b \partial_i (V^l) \partial_j \partial_l u * J)\\
\nonumber &  - a(b \partial_i \partial_j(V^l) u * \partial_l J) + a( \partial_l(b \partial_i \partial_j(V^l)) u * J).
\end{align*}
For the first term on the right side of the previous equality, introduce an additional difference to see that 
\begin{align*}
[V, \mcm ] u &= a( b\Delta_{V^l}^2 \partial_j \partial_l u *\partial_i J) + a(b \partial_m(V^l)\partial_j \partial_l u *x^m\partial_i J) \\&- a ( \partial_i(b \Delta_{V^l}) \partial_j \partial_l u * J )
+ V^l \partial_l(a)( b \partial_i \partial_j  u * J) + V^l a ( \partial_l(b) \partial_i \partial_j u *J) \\&- a(b \partial_j(V^l) \partial_i \partial_l u * J) - a(b \partial_i (V^l) \partial_j \partial_l u * J) - a(b \partial_i \partial_j(V^l) u * \partial_l J)\\
& + a( \partial_l(b \partial_i \partial_j(V^l)) u * J). 
\end{align*}
Let $[V, \mcm]u= T_1u + T_2 u  $ where 
\begin{align}
\nonumber T_1 u &= a( b\Delta_{V^l}^2 \partial_j \partial_l u *\partial_i J) \\
\label{eqn:comm_dec_1}T_2 u & = a(b \partial_m(V^l)\partial_j \partial_l u *x^m\partial_i J) - a ( \partial_i(b \Delta_{V^l}) \partial_j \partial_l u * J )\\
&\nonumber 
+ V^l \partial_l(a)( b \partial_i \partial_j  u * J) + V^l a ( \partial_l(b) \partial_i \partial_j u *J) - a(b \partial_j(V^l) \partial_i \partial_l u * J)\\& \nonumber - a(b \partial_i (V^l) \partial_j \partial_l u * J) - a(b \partial_i \partial_j(V^l) u * \partial_l J) + a( \partial_l(b \partial_i \partial_j(V^l)) u * J). 
\end{align}
Since we are permitted two applications of integration by parts, note that 
\begin{align}
\label{eqn:comm_dec_2}T_1 u &= a(b \Delta^2_{V^l} u * \partial_l \partial_j \partial_i J) - a (\partial_{l}(b) \Delta^2_{V^l} u * \partial_j \partial_i J)\\ \nonumber &+ a( b \partial_{l}\partial_{m}(V^l) u * x^{m} \partial_j \partial_i J) -a (\partial_{j}(b) \Delta^2_{V^l} u * \partial_l \partial_i J)\\ \nonumber &+ a( b \partial_{j}\partial_{m}(V^l) u * x^{m} \partial_l  \partial_i J)
 + a( \partial_l \partial_j(b) \Delta^2_{V^l} u * \partial_i J) \\
 &\nonumber - a(b \partial_l \partial_m \partial_j(V^l) u * x^m \partial_i J)+ a(b \partial_l \partial_j(V^l) u *  \partial_i J)\\
\nonumber &\qquad - a( \partial_l(b) \partial_j \partial_m(V^l)u* x^m \partial_i J)- a( \partial_j(b) \partial_l \partial_m(V^l)u* x^m \partial_i J). 
\end{align}
Recycling the notation for $J$ above, observe that each term in $[V, \mcm ]u=T_1u+ T_2 u$ is of one of the following eight general forms
\begin{align*}
&A_1 u= a(b F u *  J), && A_2 u =a(b  F u * K)\\
& A_3 u = a(\Delta_{F}^2 b u * K), && A_4 u=a(\Delta_{F}^2 b u * \partial_i K)\\
& A_5 u = a(b \Delta_F^2 u * \partial_l \partial_i K), \qquad &&A_6 u = a (b F \partial_{i} \partial_j u * J)\\
& A_7  u= a F (b \partial_i \partial_j u * J), && A_8 u = a F(b G \partial_i \partial_j u * J).        
\end{align*}
where $J \in \mathcal{J}_\alpha$, $K \in \mathcal{J}_{\alpha-1}$ and $a,b, F, G \in C^{\infty}(\mathbb{R}^d: \mathbb{R})\cap B(\mathbb{R}^d:\mathbb{R})$.  We use the capital letters $F$ and $G$ to emphasize those terms which may depend on the coefficients of $V$.

\subsection{Bounding $\|A_i u\|_{(-\beta)}$}

Let $\beta >0$ be such that $\alpha + \beta >2$ and fix $\gamma \in (2-\alpha,1)$.  We now estimate $\|A_i u \|_{(-\beta)}$ for $i=1,2,\ldots, 8$ while keeping careful track of how the constants in the bounds depend on $F$ and $G$.  As in previous arguments, all constants below will depend on $Q$ but not on $u\in C_0^{\infty}(Q: \mathbb{R})$. 

Referring to Section \ref{sec:frac}, Lemma \ref{lem:kernel_a} is easily seen to imply the following bounds: 
\begin{align*}
\label{eqn:est1}\|A_1 u\|_{(-\beta)} &\leq C_1 \|\,\, |Fu|\,\, \|_{(-\alpha-\beta)}, \tag{b1}\\    
\|A_2 u \|_{(-\beta)} &\leq C_2 \| \,\, |Fu | \,\, \|_{(-\alpha -\beta +1)}.\tag{b2}
\end{align*}
where $C_1, C_2>0$ are independent of $F$ and $G$.  To bound $\| A_3 u \|_{(-\beta)}$ first notice 
\begin{align*}
|\scb_{-\beta} A_3 u | \leq 2 \|a\|_{\infty}\|b\|_{\infty}\big[\|F\|_{\infty} \scb_{-\beta} (|u|* |K|) + \scb_{-\beta} (|\partial_j(F) u|* |x^j K|)\big].  
\end{align*}
Again, applying Lemma \ref{lem:kernel_a} we find:
\begin{align*}
\tag{b3}
\| A_3 u \|_{(-\beta)} \leq C_3(\|F\|_{\infty}) \Big[\|u\|+ \| \, \, |F'u| \, \, \|_{(-\alpha-\beta)}\Big].
\end{align*}
Since 
\begin{align*}
|\scb_{-\beta} A_4 u | \leq \|a\|_{\infty} \|b\|_{\infty}[ |F|_{\delta}\scb_{-\beta}(|u|*|x|^\gamma |\partial_i K|) + \scb_{-\beta}(|\partial_j(F) u|* |x^j\partial_i K|)],
\end{align*}
we also obtain the bound:
\begin{align*}
\tag{b4}\|A_4 u \|_{(-\beta)}& \leq C_4(|F|_{\delta})  \Big[\|u \| + \|\, \, |F'u |\,\,  \|_{(-\alpha- \beta +1)}\Big].  
\end{align*}
In a similar fashion, it is not hard to see that 
\begin{align*}
\tag{b5}
\|A_5 u \|_{(-\beta)}& \leq C_5(|F|_{1+\delta}) \|u\|.
\end{align*}

For $\| A_6 u \|_{(-\beta)}$, integrate by parts to obtain  
\begin{align*}
\scb_{-\beta} A_6 u &= \scb_{-\beta}a(b F \partial_j u * \partial_i J) - \scb_{-\beta} a(\partial_i(b F) \partial_j u*   J)\\
&= \scb_{-\beta}a(b F\partial_j u * \partial_i J) - \scb_{-\beta} a(\partial_i(b F)  u*  \partial_j J) + \scb_{-\beta} a(\partial_j \partial_i(b F)  u*  J)\\
&= \scb_{-\beta}a\partial_j (b F u * \partial_i J)-  \scb_{-\beta}a(\partial_j (b F) u * \partial_i J) - \scb_{-\beta} a(\partial_i(b F)  u*  \partial_j J)\\
&\qquad  +\,  \scb_{-\beta} a(\partial_j \partial_i(b F)  u*  J ).
\end{align*}
Let 
\begin{align*}
T_1 u &= -  \scb_{-\beta}a(\partial_j (b F) u * \partial_i J) - \scb_{-\beta} a(\partial_i(b F)  u*  \partial_j J)\\
&\qquad  +\,  \scb_{-\beta} a(\partial_j \partial_i(b F)  u*  J),\\
T_2 u &=\scb_{-\beta}a\partial_j (b F u * \partial_i J).
\end{align*}
It is plain that 
\begin{align*}
\|T_1 u \| \leq C\Big[\| \,\,|Fu|\,\, \|_{(-\alpha-\beta+1)} + \|\,\, |F' u| \,\, \|_{(-\alpha -\beta +1)} + \| \,\, |F'' u|\, \, \|_{(-\alpha -\beta)}\Big]
\end{align*}
where the constant $C>0$ is independent of $F$.  
To bound $\|T_2 u\|$, write
\begin{align*}
T_2 u &=\scb_{-\beta}a\partial_j (b F u * \partial_i J)\\
&= a \scb_{-\beta}\partial_j (b F u * \partial_i J)- \scb_{-\beta}\Delta_a\partial_j (b F u * \partial_i J)\\
& = a \scb_{-\beta}\partial_j (b F u * \partial_i J)- \scb_{-\beta}\partial_j(a) (b F u * \partial_i J) + \Delta_a ( b F u * \partial_i J) * \partial_j G_\beta
\end{align*}
where the last equality follows by integration by parts.  From this, we see that 
\begin{align*}
\|T_1 u \| \leq C\|\,\, |Fu|\,\, \|_{(-\alpha -\beta +2)}.
\end{align*} 
where $C>0$ is independent of $F$.  Putting the bounds for $\|T_1 u\|$ and $\|T_2 u\|$ together we see that
\begin{align*}
\| A_6 u \|_{(-\beta)}\leq C_6 \Big[\|\, \, |Fu| \,\, \|_{(-\alpha -\beta +2)}+ \| \,\, |F' u |\, \, \|_{(-\alpha -\beta +1)} + \|\, \, |F'' u |\,\, \|_{(-\alpha-\beta)}\Big] \tag{b6}
\end{align*}
where $C_6 >0$ is independent of $F$.  Using the very same process as in the estimate for $\| A_6 u\|_{(-\beta)}$, we also obtain the following bounds:
\begin{align*}
\|A_7 u \|_{(-\beta)} &\leq C_7(|F|_{1}) \|u\|,\tag{b7} \\
\label{eqn:est8}\|A_8 u \|_{(-\beta)}&\leq  C_8(|F|_{1})[\| \,\, |Gu| \,\, \|_{(-\alpha -\beta +2)}+ \| \,\, |G' u | \, \,\|_{(-\alpha -\beta +1)} + \| \,\, |G'' u |\,\, \|_{(-\alpha-\beta)}], \tag{b8}
\end{align*}

\subsection{Bounding $\|[G\partial_k, A_i]u \|_{(-\beta)}$}  For the proof of Lemma \ref{lem:tough_rcalc}, we will only need to estimate $\|[G\partial_k, A_i]u \|_{(-\beta)}$, $i=1,2,\ldots, 7$.

For $\| [G\partial_k, A_1]u\|_{(-\beta)}$, write 
\begin{align*}
\scb_{-\beta} [G \partial_k , A_1]u &= \scb_{-\beta} \partial_k(a) G(b F u * J)+ \scb_{-\beta} a G(b F u * \partial_k J)\\
&\qquad  -\scb_{-\beta}  a(b F G \partial_k u * J)\\
&= \scb_{-\beta} \partial_k(a) G(b Fu * J)+ \scb_{-\beta}  a G(b F u * \partial_k J) \\
&\qquad - \scb_{-\beta} a(b F G  u *\partial_k J )+ \scb_{-\beta} a( \partial_k(b  F G ) u *J).  
\end{align*}
Then the estimate 
\begin{align*}
\| [G \partial_k, A_1]u \|_{(-\beta)}\leq D_1 (|G|_{1}) \Big[\, \| \,\, |Fu|\, \,\|_{(-\alpha -\beta +1)}+ \|\, \, |F'u| \,\, \|_{(-\alpha-\beta)}\, \Big], \tag{b9}
\end{align*}
follows immediately.

For $\| [G \partial_k, A_2 ]u\|$, note that 
\begin{align*}
\scb_{-\beta} [G \partial_k , A_2]u &= \scb_{-\beta} \partial_k(a) G(b F u * K)+ \scb_{-\beta} a G(\partial_k(b F u) * K) \\
&\qquad - \scb_{-\beta} a(b F  G \partial_k u * K)\\
&=  \scb_{-\beta} \partial_k(a) G(b F u * K)+ \scb_{-\beta} a( \Delta_{G} b F \partial_k  u* K)\\
&\qquad  + \scb_{-\beta} aG(\partial_k(b F) u * K)\\
&=\scb_{-\beta} \partial_k(a) G(b F u * K)+ \scb_{-\beta}  a( \Delta_{G} b F u*\partial_k K) \\
&\qquad -\scb_{-\beta}  a(\partial_k ( \Delta_{G} b F)  u* K )+\, \scb_{-\beta}  aG(\partial_k(b F) u *K).  
\end{align*}
The bound  
\begin{align*}
\| [G \partial_k, A_2]u \|_{(-\beta)}\leq D_2(|G|_{1}) \Big[\,\| \, \,|Fu|\,\, \|_{(-\alpha -\beta +1)}+ \| \, \, |F'u| \,\, \|_{(-\alpha-\beta+1)}\,\Big] \tag{b10}
\end{align*}
thus follows.

Now for $\| [G \partial_k, A_3]u\|_{(-\beta)}$, write
\begin{align*}
\scb_{-\beta} [G \partial_k, A_3]u & = \scb_{-\beta} G \partial_k(a) ( \Delta_F^2 b u * K) + \scb_{-\beta} G a ( \Delta_{\partial_k(F)} b u * K)  \\
& + \scb_{-\beta} G a ( \Delta_F^2 b u * \partial_k K)  - \scb_{-\beta} a( \Delta_F^2 bG \partial_k u * K) \\
&= \scb_{-\beta} G \partial_k(a) ( \Delta_F^2 b u * K) + \scb_{-\beta} G a ( \Delta_{\partial_k(F)} b u * K)  \\
&+ \scb_{-\beta} G a (  \partial_k(\Delta_F^2 b) u * K)  + \scb_{-\beta} a( \Delta_F^2 \Delta_G b u *\partial_k  K).  
\end{align*}
Using this expression, it is not hard to obtain the estimate:  
\begin{align*}
\| [G \partial_k, A_3]u \|_{(-\beta)}&\leq D_3( |F|_{1}, \|G\|_{\infty})\Big[ \|u\|+ \| \, |F''u | \,\|_{(-\alpha-\beta)}\Big] \tag{b11}
\end{align*}

For  $\|[G \partial_k, A_4]u\|_{(-\beta)}$, notice
\begin{align*}
\scb_{-\beta} [G \partial_k, A_4]u &= \scb_{-\beta} G \partial_k(a) ( \Delta_F^2 b u * \partial_i K) + \scb_{-\beta} Ga ( \Delta_{\partial_k(F)} b u * \partial_i K)\\
&+  \scb_{-\beta} G a ( \Delta_F^2 b u *\partial_k \partial_i K)-  \scb_{-\beta} a ( \Delta_F^2 b G \partial_k u * \partial_i K) \\
&=  \scb_{-\beta} G \partial_k(a) ( \Delta_F^2 b u * \partial_i K) + \scb_{-\beta} Ga ( \Delta_{\partial_k(F)} b u * \partial_i K)\\
&+  \scb_{-\beta} a ( \partial_k(\Delta_F^2 b G) u * \partial_i K)  +  \scb_{-\beta} a( \Delta_F^2 \Delta_G b u * \partial_k \partial_i K).   
\end{align*}
From this we obtain the bound
\begin{align*}
\| [G \partial_k, A_4]u \|_{(-\beta)}\leq D_4(|F|_{ 1+ \delta}, |G|_{1})\Big[ \|u\|+ \|\, \, |F''u| \,\, \|_{(-\alpha-\beta+1)}\Big]. \tag{b12}
\end{align*}

  Turning to the estimate for $\|[G \partial_k, A_5]u \|_{(-\beta)}$, write 
\begin{align*}
\scb_{-\beta} [G \partial_k, A_5]u &= \scb_{-\beta} G \partial_k(a) ( \Delta_F^2 b u * \partial_j\partial_i K) + \scb_{-\beta} a G\partial_k ( \Delta_F^2 b u * \partial_j \partial_{i} K) \\
& \qquad - \scb_{-\beta} a ( \Delta_F^2 G b  \partial_k u * \partial_j \partial_{i} K)\\
&=\scb_{-\beta} G \partial_k(a) ( \Delta_F^2 b u * \partial_j\partial_i K) + \scb_{-\beta} a G \partial_k ( \Delta_F^3 b u * \partial_j \partial_i K) \\
&- \scb_{-\beta} a ( \Delta_F^3 G b  \partial_k u * \partial_j \partial_{i} K) + \scb_{-\beta} a G \partial_k( \partial_l \partial_m(F) b u * x^m x^l \partial_j \partial_i K) \\
&-\scb_{-\beta} a ( \partial_l \partial_m(F) G b \partial_k u * x^m x^l \partial_j \partial_i K). 
\end{align*}
Unraveling the previous expression further we obtain
\begin{align*}
\scb_{-\beta} [G \partial_k, A_5]u &=\scb_{-\beta} G \partial_k(a) ( \Delta_F^2 b u * \partial_j\partial_i K) + \scb_{-\beta} a G  ( \Delta_{\partial_k(F)}^2 b u * \partial_j \partial_i K) \\
&+ \scb_{-\beta} a G  ( \Delta_{F}^3 b u * \partial_k \partial_j \partial_i K)+ \scb_{-\beta} a (  \partial_k(\Delta_F^3 G b ) u * \partial_j \partial_{i} K) \\
&- \scb_{-\beta} a ( \Delta_F^3 G bu *   \partial_k \partial_j \partial_{i} K) + \scb_{-\beta} a G \partial_k( \partial_l \partial_m(F) b u * x^m x^l \partial_j \partial_i K) \\
&-\scb_{-\beta} a\partial_k ( \partial_l \partial_m(F) G b  u * x^m x^l \partial_j \partial_i K)\\
& + \scb_{-\beta} a(\partial_k ( \partial_l \partial_m(F) G b)  u * x^m x^l \partial_j \partial_i K).  
\end{align*}
From this expression we may deduce the bound 
\begin{align*}
\label{comm:5}
&\|[G \partial_k, A_5]u  \|_{(-\beta)} \leq C(|F|_{2+ \delta}, |G|_{1})\Big[\|u\|+ \| \,\, |F''' u | \, \,\|_{(-\alpha -\beta +1)}\Big]. \tag{b13}
\end{align*}
For $\| [G\partial_k,A_6]u\|_{(-\beta)}$, use \eqref{eqn:comm_dec_1} and \eqref{eqn:comm_dec_2} replacing $V^l$, $\partial_l$, and $b$ with $G$, $\partial_k$, and $b F$ respectively.  Then apply the first line of estimates \eqref{eqn:est1}-\eqref{eqn:est8} for each of these terms to obtain the estimate:
\begin{align*}
\tag{b14}\| [G \partial_k, A_6]u \|_{(-\beta)} &\leq D_6(|F|_{1}, |G|_{1+ \delta})\Big[\, \|u\| + \| \,\, |F'' u |\, \, \|_{(-\alpha -\beta +1)}\\
\nonumber &\qquad \qquad + \| \,\, |F''' u | \, \,\|_{(-\alpha -\beta)}+ \|\, \, |G'' u | \, \,\|_{(-\alpha -\beta+1)} \, +\| \,\, |G''' u | \,\, \|_{(-\alpha -\beta)}\Big]
\end{align*}
One can similarly estimate $\| [G\partial_k,A_7]u\|_{(-\beta)}$ to see that 
\begin{align*}
\tag{b15}&\| [G \partial_k, A_7]u \|_{(-\beta)} \leq D_6(|F|_{2}, |G|_{1+ \delta})\big[ \,\|u \| + \| \,\, |G''u|\, \, \|_{(-\alpha -\beta +1)}\\
\nonumber &\qquad \qquad\qquad\qquad  \qquad\qquad\qquad \qquad\qquad + \| \,\, |G'''u|\, \, \|_{(-\alpha -\beta +1)}\, \big]
\end{align*}

We now finish the proof of Lemma \ref{lem:tough_rcalc}.  The bound for $\| [V, \mathcal{M}]u\|_{(-\beta)}$ follows from \eqref{eqn:comm_dec_1} and \eqref{eqn:comm_dec_2} and the estimates  \eqref{eqn:est1}-\eqref{eqn:est8} for $\|A_i u \|_{(-\beta)}$, $i=1,2,\ldots, 8$. The estimate for $\| [V,[V, \mathcal{M}]]u \|_{(-\beta)}$ follows from \eqref{eqn:comm_dec_1} and \eqref{eqn:comm_dec_2} and the bounds (b9)-(b15).

\bibliographystyle{plain}
\bibliography{Rbiblio}

\end{document}